\begin{document}
	\newcommand{\bea}{\begin{eqnarray}}
		\newcommand{\eea}{\end{eqnarray}}
	\newcommand{\nn}{\nonumber}
	\newcommand{\bee}{\begin{eqnarray*}}
		\newcommand{\eee}{\end{eqnarray*}}
	\newcommand{\lb}{\label}
	\newcommand{\nii}{\noindent}
	\newcommand{\ii}{\indent}
	\newtheorem{theorem}{Theorem}[section]
	\newtheorem{example}{Example}[section]
	\newtheorem{corollary}{Corollary}[section]
	\newtheorem{definition}{Definition}[section]
	\newtheorem{lemma}{Lemma}[section]
	\newtheorem{remark}{Remark}[section]
	\newtheorem{proposition}{Proposition}[section]
	\numberwithin{equation}{section}
	\renewcommand{\qedsymbol}{\rule{0.7em}{0.7em}}
	\renewcommand{\theequation}{\thesection.\arabic{equation}}
	%\bibpunct[, ]{(}{)}{;}{a}{,}{,}
	\renewcommand\bibfont{\fontsize{10}{12}\selectfont}
	\setlength{\bibsep}{0.0pt}
	%\doublespacing
		\title{\bf General weighted information and relative information generating functions with properties**}
	
\author{ Shital {\bf Saha}\thanks {Email addresses: shitalmath@gmail.com,~520MA2012@nitrkl.ac.in} ~and  Suchandan {\bf  Kayal}\thanks {Email addresses (corresponding author):
		suchandan.kayal@gmail.com,~kayals@nitrkl.ac.in
		\newline**To appear on \textbf{IEEE Transactions on Information Theory}.}
	\\{\it \small Department of Mathematics, National Institute of
			Technology Rourkela, Rourkela-769008, India}}
\date{}
\maketitle
\begin{center}
\textbf{Abstract}
\end{center}
In this work, we propose two information generating functions: general weighted information and relative information generating functions, and study their properties. { It is shown that the general weighted information generating function (GWIGF) is shift-dependent and can be expressed in terms of the weighted Shannon entropy. The GWIGF of a transformed random variable has been obtained in terms of the GWIGF of a known distribution. Several bounds of the GWIGF have been proposed. We have obtained sufficient conditions under which the GWIGFs of two distributions are comparable. Further, we have established a connection between the weighted varentropy and varentropy with proposed GWIGF. An upper bound for GWIGF of the sum of two independent random variables is derived. The effect of general weighted relative information generating function (GWRIGF) for two transformed random variables under strictly monotone functions has been studied. } Further, these information generating functions are studied for escort, generalized escort and mixture distributions. {Specially, we propose weighted $\beta$-cross informational energy and establish a close connection with GWIGF for escort distribution.} The residual versions of the newly proposed generating functions are considered and several similar properties have been explored. A non-parametric estimator of the residual general weighted information generating function is proposed. A simulated data set and two real data sets are considered for the purpose of illustration. { Finally, we have compared the non-parametric approach with a parametric approach in terms of the absolute bias and mean squared error values.}
\\	
\\
		 \textbf{Keywords:} Weighted information generating function, weighted relative information generating function, mixture model, escort distribution, {weighted $\beta$-cross informational energy}, residual lifetime.
		 %non-parametric estimators.
		 \\
		 \\
		\textbf{Mathematics Subject Classification (2020):} 62B10; 60E05; 94A17.
			
\section{Introduction}
In recent past, information theory has become an important field of reseach, which mainly deals with the quantification of the lack of sureness associated with a distribution. The concept of entropy was first proposed by \cite{shannon1948mathematical} in information theory. Due to this path-breaking contribution, C. Shannon is known as the father of information theory. Though entropy was first developed for discrete random variables (RVs), here we present its developments for the non-negative and absolutely continuous RVs, since in this work, we mainly focus on the information generating functions (IGFs) for continuous case. Suppose $X$ is a non-negative absolutely continuous RV with probability density function (PDF) $f(\cdot)$. The Shannon entropy for $X$ is given as  
\begin{eqnarray}\label{eq1.1}
H(X)=E(-\log f(X))=-\int_{0}^{\infty}f(x)\log f(x)dx.
\end{eqnarray}
$H(X)$ given above is also called as the differential entropy. Throughout the paper, we treat `$\log$' as natural logarithm. It is known that $H(X)$ is position-free. For $a,b>0$ $H(aX+b)=H(X)+\log a.$ It happens since (\ref{eq1.1}) does not include the value of RV. It depends on its PDF. Due to this property, the Shannon entropy is not applicable to some situations, where a biological system is supposed to produce different uncertainty measures at two different locations. Motivated by the works of \cite{belis1968quantitative} and \cite{guiacsu1971weighted}, weighted form of $H(X)$ has been proposed by \cite{di2007weighted}. Henceforth, we call it weighted entropy instead of weighted Shannon entropy. The weighted entropy for $X$ is 
\begin{eqnarray}\label{eq1.2}
H^{x}(X)=\int_{0}^{\infty}(\log f(x)) x f(x) dx,
\end{eqnarray}
where the factor $x$ is called as weight, linearly focusing the occurrence of the event, say $\{X=x\}.$ The presence of the factor $x$ inside the integral of (\ref{eq1.2}) includes more importance when  $X$ assumes larger values.  Clearly, when weight is equal to $1$, (\ref{eq1.2}) reduces to (\ref{eq1.1}). 

The most convenient tool to generate moments of a probability distribution is its moment generating function (MGF). All the raw moments of a distribution, if exists, can be obtained after successive derivatives of the MGF, say $M_{X}(s)=E(e^{sX})$ with respect to $s$, at $s=0,$ that is $E(X^{k})=\frac{d^k}{ds^k}M_{X}(s)|_{s=0}.$ Motivating with this well-known fact, \cite{golomb1966} first proposed IGF of a distribution whose first order derivative evaluated at $1$ yields negative Shannon entropy or negentropy of a distribution. The IGF of $X$ with PDF $f(\cdot)$ for $\beta\ge1$ is
\begin{eqnarray}\label{eq1.3}
I_{\beta}(X)=\int_{0}^{\infty}f^{\beta}(x)dx.
\end{eqnarray}
The reason for taking $\beta$ to be larger than $1$, has been explained by \cite{golomb1966}. Clearly, $I_{\beta}(X)|_{\beta=1}=1$  and $\frac{d}{d\beta}I_{\beta}(X)|_{\beta=1}=-H(X).$ After almost $20$ years, the relative information generating function (RIGF) was introduced by \cite{guiasu1985relative}. It is shown that its derivative with respect to $\beta$ computed at $1$ produces the well-known Kullback-Leibler (KL) divergence. Suppose $X$ and $Y$ have PDFs $f(\cdot)$ and $g(\cdot),$ respectively. The RIGF of $X$ given the reference variable $Y$ is 
\begin{eqnarray}\label{eq1.4}
R_{\beta}(X,Y)=\int_{0}^{\infty}f^{\beta}(u)g^{1-\beta}(u)du,~\beta\ge1.
\end{eqnarray}
Obviously, $R_{\beta}(X,Y)|_{\beta=1}=1$ and $\frac{d}{d\beta}R_{\beta}(X,Y)|_{\beta=1}=\int_{0}^{\infty}\log(\frac{f(u)}{g(u)}) f(u)du,$ which is dubbed as KL divergence between $X$ and $Y.$ For elaborate discussion about KL divergence, we refer the reader to \cite{kullback1951information}. \cite{guiasu1985relative} also showed that the KL $J$-divergence between $X$ and $Y$ can be derived using (\ref{eq1.4}) as follows:
\begin{eqnarray}
J(X,Y)&=&\int_{0}^{\infty}\left(\log\left(\frac{f(x)}{g(x)}\right) f(x)+\log\left(\frac{g(x)}{f(x)}\right)g(x)\right)dx\nonumber\\
&=&\frac{\partial}{\partial\beta}R_{\beta}(X,Y)|_{\beta=1}+\frac{\partial}{\partial\beta}R_{\beta}(Y,X)|_{\beta=1}.
\end{eqnarray} 

After almost $50$ years since \cite{golomb1966}'s contribution, researchers have shown their interest to further develop IGFs and study their properties for various other information measures. For example, \cite{papaioannou2007some} proposed Fisher information and divergence generating functions. \cite{kharazmi2021jensen} introduced new divergence generating function, which yields some well-known measures. Based on the survival function, cumulative and relative cumulative IGFs have been proposed by \cite{kharazmi2021cumulative}. The authors have studied various properties of these generating functions. \cite{kharazmi2023optimal} proposed Jensen-RIGF and study its connections with Jensen-Shannon entropy. Besides these works, we also refer to \cite{kharazmi2022generating},  \cite{zamani2022information}, \cite{kharazmi2023optimal}, \cite{smitha2023dynamic}, and \cite{kharazmi2023jensen}.

{ Consider a probabilistic experiment $X$ having outcomes $x_{1},\cdots,x_{n}$ with corresponding probabilities $p_1,\cdots, p_n$, respectively, such that $p_i>0$, $i=1,\cdots,n$ and $\sum_{i=1}^{n}p_i=1.$ Then, the IGF is defined as (see \cite{golomb1966})
\begin{eqnarray}\label{eq1.6*}
	I_{\beta}(X)=\sum_{i=1}^{n}p_i^{\beta},~~\beta\ge1.
\end{eqnarray}
Note that (\ref{eq1.6*}) depends only on the probabilities with which various outcomes occur. However (\ref{eq1.6*}) is not useful in many fields, dealing with random experiments where it is necessary to take into account both probabilities and some qualitative characteristic of these events. Thus, for distinguishing the outcomes $x_1,\dots,x_n$ of a goal-directed experiment according to their importance with respect to a given qualitative characteristic of the system, we will assign non-negative number $\omega_k\ge0$ to each outcome $x_k$. Here, one may choose $\omega_k$, directly proportional to the importance of the $k$th outcome. Note that $\omega_k$'s are known as the weights of the outcomes $x_k$, $k=1,\ldots,n$. This type experiment is called as a probabilistic experiment with weight. For such kind of experiments the weighted IGF is useful. It is defined as 
\begin{align}\label{eq1.7}
	I_\beta^{\omega}(p)=\sum_{i=1}^{n}\omega_ip^\beta_i,~~\beta\ge1.
\end{align}
In the subsequent section, we consider the continuous analogue of (\ref{eq1.7}). Further, we present an example to show its importance.
\begin{example}
	Consider a discrete type random variable $X\in\{1,2,3\}$ with $P(X=1)=p_1$, $P(X=2)=p_2$ and $P(X=3)=p_3$. For the sets of probabilities $\{p_1=0.6, p_2=0.1,p_3=0.3\}$ and $\{p_1=0.3, p_2=0.6,p_3=0.1\}$, the values of IGF (\cite{golomb1966}) are equal to $(0.6)^\beta+(0.3)^\beta+(0.1)^\beta$. However, if we consider weights corresponding to each outcomes, then they are not equal. For example, let $\omega_1=1,\omega_2=2,\omega_3=3$. Then, for the sets of probabilities $\{p_1=0.6, p_2=0.1,p_3=0.3\}$ and $\{p_1=0.3, p_2=0.6,p_3=0.1\}$, the weighted IGFs are respectively equal to $(0.6)^\beta+2(0.1)^\beta+3(0.3)^\beta$ and $(0.3)^\beta+2(0.6)^\beta+3(0.1)^\beta$, which are clearly different from each other for $\beta\ge1.$
\end{example}
For $\beta=2,$ the IGF in (\ref{eq1.6*}) reduces to informational energy (IE), proposed by \cite{onicescu1966theorie}. The IE has many applications in different areas. For example, \cite{flores2016informational} have shown how the IE can be applied as a correlation measure in systems of atoms and molecules.  In addition, the authors have offered a useful examination of the correlation effects among the atoms in a collection of 1864 molecules. \cite{ou2019benchmark} have demonstrated the use of highly correlated Hylleraas wave functions in the analysis of the ground state helium by the use of Onicescu's informational energy. In a similar way, the weighted IGF in (\ref{eq1.7}) reduces to the weighted informational energy introduced by \cite{pardo1986order}. }
In this paper, we focus on the extension of two of these aforementioned specific results {(due to \cite{golomb1966}) and \cite{guiasu1985relative})} into a more general result. The key contributions of this paper are as follows.

\begin{itemize}
	\item[1.] We propose general information and relative information generating functions by introducing a positive-valued measurable weight function $\omega(\cdot)$ with the $\beta$th power of the PDF $f(\cdot)$ inside the integral expression of (\ref{eq1.3}) and that with the $\beta$th power of $f(.)$ and $(1-\beta)$th power of $g(.)$ inside the integral of (\ref{eq1.4}). We shall see later that our results include most of the previous results related to IGF in (\ref{eq1.3}) and RIGF in (\ref{eq1.4}) as corollaries and some special cases. 
	
	\item[2.] Our results provide a comprehensive treatment for a large class of IGFs including generating functions proposed by \cite{golomb1966} and \cite{guiasu1985relative}. We study the proposed general IGFs for the escort, generalized escort and $(r,\gamma)$-mixture models. { Further, we propose weighted $\beta$-cross informational energy and obtain a close relation with GWIGF for escort distribution.} It is shown that the weighted varentropy can be deduced from the general weighted IGF. We also show that the KL weighted $J$-divergence is obtained using the proposed concept of general weighted RIGF. 
	 
	\item[3.] The residual lifetime describes the future life of a system given that the system has survived upto a certain age. Considering the importance of the proposed general information and relative information generating functions in such practical situations, we  study residual versions of the proposed generating functions.
	
	\item[4.] We  propose a non-parametric estimator of the residual weighted general IGF. Further, a simulated data set and two data sets related to the remission times (in months) of $128$ bladder cancer patients (see \cite{lee2003statistical}) and relief times of $20$ patients who received an analgesic (see \cite{gross1975survival}) have been considered for illustrative purposes. {Goodness of fit tests are applied to see the best fitted statistical models. Finally, a comparison study has been carried out between the non-parametric approach and a parametric approach in terms of the absolute bias and mean squared error values.}
\end{itemize}

The article is arranged as follows. A general IGF with a general weight $\omega(x)$ is proposed in Section $2$. Some properties with the effect of monotone transformations and bounds are studied. Sufficient conditions under which the general weighted IGFs of two distributions are ordered have been obtained. Section $3$ focuses on the general weighted RIGF and addresses its various properties. Here, we have found a connection between weighted KL $J$-divergence and the newly proposed general RIGF. Section $4$ studies the proposed general IGFs for escort, generalized escort and $(r,\gamma)$-mixture distributions. { In addition, we propose weighted $\beta$-cross informational energy and obtain a close relation with GWIGF for escort distribution.} The residual versions of the proposed IGFs with several properties have been studied in Section $5$. In Section $6,$ we propose a non-parametric estimator of the residual general weighted IGF. Further, three data sets have been considered to compute the bias and mean squared error of the proposed estimator. {We have also compared the non-parametric approach with a parametric approach in terms of the absolute bias and mean squared errors.} Finally, Section $7$ concludes the paper.

Throughout the text, the RVs are assumed to be non-negative and absolutely continuous. The expectations, differentiations and integrations are assumed to exist. Decreasing and increasing are used in non-strict sense.

\section{General weighted information generating function}
This section addresses various properties of the general weighted information generating function (GWIGF) of a continuous probability distribution.
\begin{definition}\label{def2.1}
	The GWIGF of an RV $X$ with PDF $f(\cdot)$ is defined as 
	\begin{eqnarray}\label{eq2.1}
		I_{\beta}^{\omega}(X)=\int_{0}^{\infty}\omega(u)f^{\beta}(u)du,~~~~\beta\ge1,
	\end{eqnarray}
provided the integral exists, where $\omega(x)\ge0$ is known as the weight or utility function.  
\end{definition}
For $\omega(x)=1$, (\ref{eq2.1}) reduces to the IGF, proposed by \cite{golomb1966}. This is the reason, we call the proposed IGF in Definition \ref{def2.1} as the GWIGF. Further, $I_{\beta}^{\omega}(X)$ is always non-negative. The $k$th order derivative of (\ref{eq2.1}) with respect to $\beta$ is
\begin{eqnarray}\label{eq2.2}
	\frac{\partial^{k}I_{\beta}^{\omega}(X)}{\partial \beta^{k}}=\int_{0}^{\infty}\omega(x)f^{\beta}(x)(\log f(x))^{k}dx,~~~~\beta\ge1.
\end{eqnarray} 

Like MGF, the GWIGF in (\ref{eq2.1}) is convex since $\frac{\partial^{2}I_{\beta}^{\omega}(X)}{\partial \beta^{2}}\ge 0.$ From (\ref{eq2.1}) and (\ref{eq2.2}), the following observations are easily made:
\begin{itemize}
	\item $I_{\beta}^{\omega}(X)|_{\beta=1}=E[\omega(X)];$
	\item $I_{\beta}^{\omega}(X)|_{\beta=2}=-2J^{\omega}(X);$ 
	\item $\frac{\partial I_{\beta}^{\omega}(X)}{\partial \beta}|_{\beta=1}=-H^{\omega}(X),$
\end{itemize}
where $J^{\omega}(X)=-\frac{1}{2}\int_{0}^{\infty}\omega(u)f^{2}(u)du$ is the weighted extropy (see \cite{gupta2023general}) and $H^{\omega}(X)$ is the weighted Shannon entropy, which is also dubbed as the weighted differential entropy.  $H^{\omega}(X)$ is studied by \cite{di2007weighted} for a special weight $\omega(x)=x.$ In above, we have noticed that the first order derivative of $I_{\beta}^{\omega}(X)$, computed at $\beta=1$, yields the negative weighted Shannon entropy  (weighted negentropy). Further, when $\beta=2,$ the GWIGF $I_{\beta}^{\omega}(X)$ reduces to $\int_{0}^{\infty}\omega(u)f^{2}(u)du,$ dubbed as the weighted informational energy. We recall that the discrete version of the informational energy measure was first introduced by \cite{onicescu1966theorie}. Later on, discrete as well as the continuous versions of the information energy function have been studied in different fields by several authors, such as \cite{pardo2003test} \cite{flores2016informational} and \cite{nielsen2022onicescu}.  In the following example, we obtain  GWIGF. Two different choices of $\omega(x)$, say $x$ and $\frac{1}{x}$ are considered for the purpose of computation. 
\begin{example}~\label{ex2.1}
	\begin{itemize}
		\item[(a)] Consider uniform distribution in $(a,b)$, denoted by $U(a,b)$. Then, 
		\begin{itemize}
			\item[(i)] for $\omega(x)=x,$ we obtain $I_{\beta}^{x}(X)=\frac{a+b}{2}\left(\frac{1}{b-a}\right)^{\beta-1}$;
			\item[(ii)] for $\omega(x)=\frac{1}{x},$ we obtain $I_{\beta}^{x^{-1}}(X)=(b-a)^{\beta}\log\left(\frac{b}{a}\right)$. 
		\end{itemize}
	\item[(b)] Let $X$ follow inverted exponential distribution with PDF $f(x)=\frac{1}{\lambda x^2}e^{-\frac{1}{\lambda x}},$ $x>0,~\lambda>0,$ denoted by $IE(\lambda).$ Then, 
	\begin{itemize}
		\item[(i)] for $\omega(x)=x,$ we get $I_{\beta}^{x}(X)=\frac{\lambda^{\beta+2}}{\beta^{2\beta+2}}\Gamma(2\beta+2)$;
		\item[(ii)] for $\omega(x)=\frac{1}{x},$ we get $I_{\beta}^{x^{-1}}(X)=\frac{\lambda^{\beta}}{\beta^{2\beta}}\Gamma(2\beta)$,
	\end{itemize}
where $\Gamma(\cdot)$ denotes the complete gamma function.
	\end{itemize}
\end{example}

\begin{remark}
	The closed-form expression of the weighted Shannon entropy of U$(a,b)$ distribution can be easily obtained from Example \ref{ex2.1}$(a)$ as follows:
	\begin{itemize}
		\item[(i)] for $\omega(x)=x$, $H^{x}(X)=-\frac{\partial I_{\beta}^{x}(X)}{\partial \beta}|_{\beta=1}=\frac{(a+b)}{2}\log(b-a);$
		\item[(ii)] for $\omega(x)=1/x$, $H^{x^{-1}}(X)=-\frac{\partial I_{\beta}^{x^{-1}}(X)}{\partial \beta}|_{\beta=1}=\log(\frac{b}{a})\log(b-a)\left({b-a}\right)^{\beta};$
	\end{itemize}
and that for inverted exponential distribution can be obtained from Example \ref{ex2.1}$(b)$ as:
\begin{itemize}
	\item[(i)] for $\omega(x)=x$, $H^{x}(X)=-\frac{\partial I_{\beta}^{x}(X)}{\partial \beta}|_{\beta=1}=\frac{\lambda^\beta}{\beta^{2\beta}}[2\psi^*(2\beta)+(\log(\frac{\lambda}{\beta^2})-2)\Gamma(2\beta)];$
	\item[(ii)] for $\omega(x)=1/x$, $H^{x^{-1}}(X)=-\frac{\partial I_{\beta}^{x^{-1}}(X)}{\partial \beta}|_{\beta=1}=\frac{\lambda^{\beta+2}}{\beta^{2\beta+2}}[2\psi^*(2\beta+2)+(\log(\frac{\lambda}{\beta^2})-\frac{2}{\beta}-2)\Gamma(2\beta+2)],$
\end{itemize}
where $\psi^*$ is a digamma function. 
\end{remark}

To see the monotonicity property of GWIGF with respect to the parameter $\beta$, the  plots of GWIGF for uniform and inverted exponential distributions are presented in Figure $1.$ From Figures $1(a)$ and $1(b)$, we observe that the GWIGF of uniform distribution is decreasing with respect to $\beta$ for both $\omega(x)=x$ and $\frac{1}{x}.$ On the other hand, for the case of inverted exponential distribution, the GWIGF is not monotone in $\beta$ (see Figures $1(c)$ and $1(d)$). Thus, we conclude that the GWIGF is not monotone in general.

	\begin{figure}[h]
	\begin{center}
		\subfigure[]{\label{c1}\includegraphics[height=1.9in]{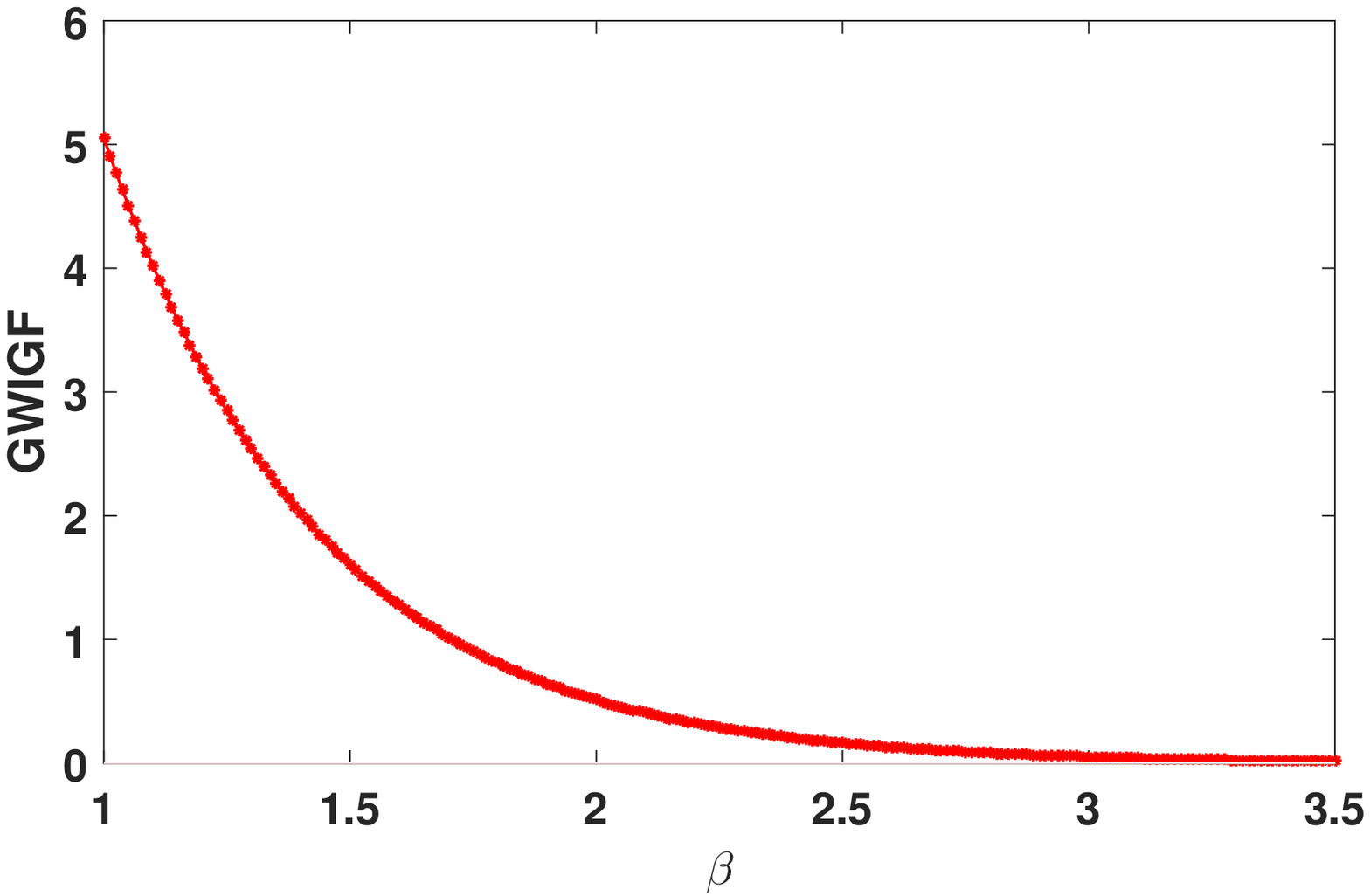}}
		\subfigure[]{\label{c1}\includegraphics[height=1.9in]{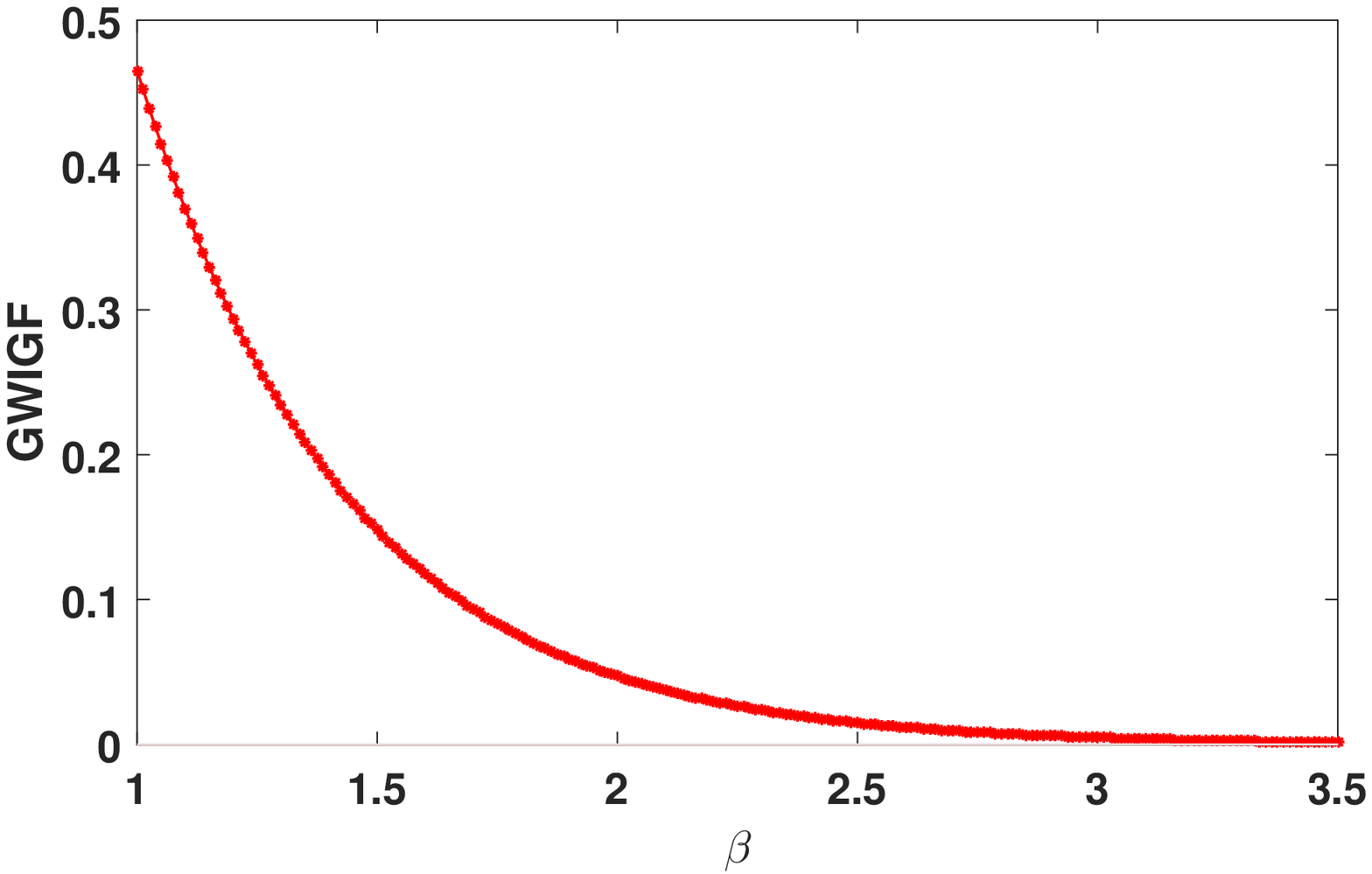}}
		\subfigure[]{\label{c1}\includegraphics[height=1.9in]{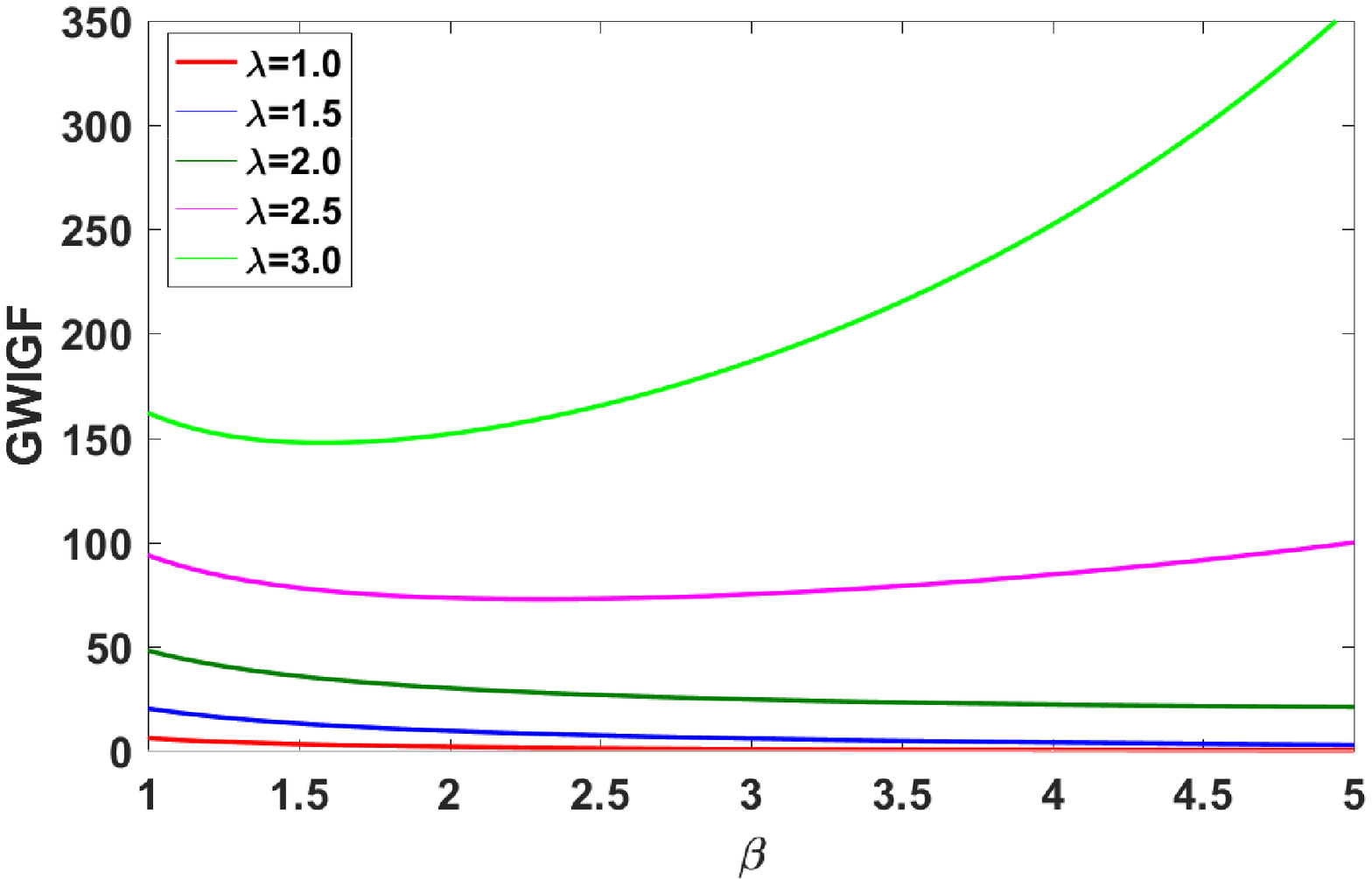}}
		\subfigure[]{\label{c1}\includegraphics[height=1.9in]{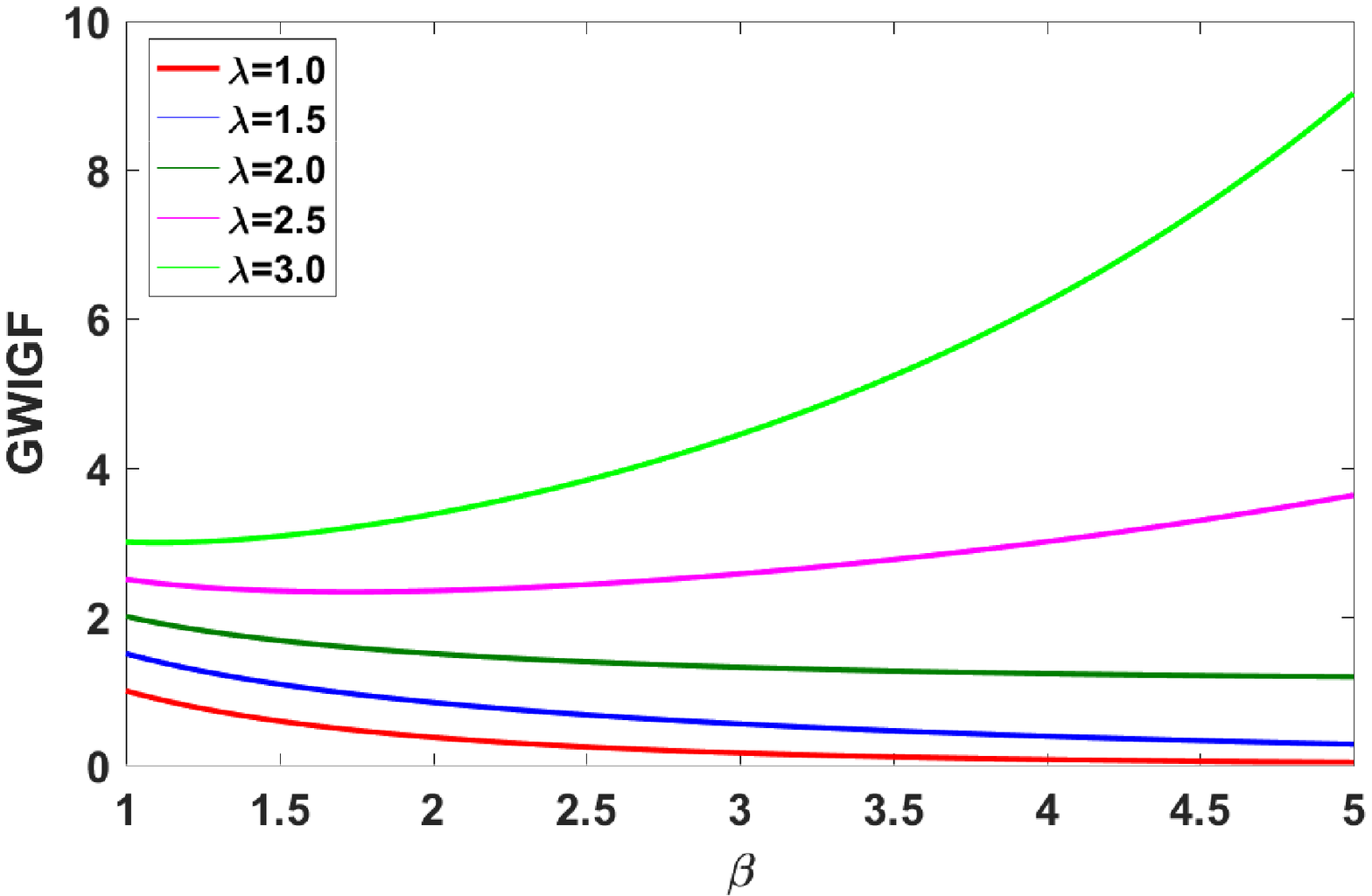}}
		
		\caption{Plot of GWIGF for $U(0,10)$ considered in Example \ref{ex2.1}$(i)$, for $(a)$ $\omega(x)=x$ and $(b)$ $\omega(x)=\frac{1}{x}$; and  of $IE(\lambda)$  as in Example \ref{ex2.1}$(ii),$ for $(c)$ $\omega(x)=x$ and $(d)$ $\omega(x)=\frac{1}{x}$.}	
	\end{center}
\end{figure}

Let $Y=aX+b$, $a,b>0$ be the affine transformations, where $X$ has PDF $f(\cdot).$  Then, the PDF of $Y$ is $g(x)=\frac{1}{a}f(\frac{x-b}{a}),~x>b.$ Using this, we obtain a relationship for the IGFs of $Y$ and $X$, given by
\begin{eqnarray}\label{eq2.3}
I_{\beta}(Y)=\frac{1}{a^{\beta-1}}I_{\beta}(X),
\end{eqnarray}
which implies that the IGF is shift-independent (independent of $b$). However, such property is not satisfied by GWIGF, which is presented below.  
\begin{proposition}\label{prop2.1}
	 Let $X$ be an RV with PDF $f(\cdot).$   Then, for $a,b>0$ 
	 \begin{eqnarray}\label{eq2.4}
	 I_{\beta}^{\omega}(Y=aX+b)=\frac{1}{a^{\beta-1}}\int_{0}^{\infty}\omega(ax+b)f^{\beta}(x)dx.
	 \end{eqnarray} 
\end{proposition}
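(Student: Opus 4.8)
The plan is to obtain (\ref{eq2.4}) by a single change of variable, exactly as in the derivation of (\ref{eq2.3}), but now keeping track of the weight $\omega$. First I would record the density of the transformed variable: since $X\ge 0$ and $a,b>0$, the affine map $x\mapsto ax+b$ sends the support $[0,\infty)$ of $f$ onto $[b,\infty)$, so $Y=aX+b$ has PDF
\[
g(y)=\frac{1}{a}\,f\!\left(\frac{y-b}{a}\right),\qquad y>b,
\]
and $g(y)=0$ for $y\le b$. This is precisely the density already written down in the text preceding (\ref{eq2.3}).

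Next, substituting $g$ into Definition \ref{def2.1} and using that $g$ vanishes on $[0,b]$, the integral defining $I_\beta^\omega(Y)$ collapses to an integral over $(b,\infty)$:
\[
I_\beta^\omega(Y)=\int_0^\infty \omega(y)\,g^\beta(y)\,dy=\frac{1}{a^\beta}\int_b^\infty \omega(y)\,f^\beta\!\left(\frac{y-b}{a}\right)dy.
\]
Then the key (and only substantive) step is the change of variable $x=(y-b)/a$, i.e.\ $y=ax+b$, $dy=a\,dx$, under which the limits transform as $y=b\mapsto x=0$ and $y\to\infty\mapsto x\to\infty$. Carrying this out gives
\[
I_\beta^\omega(Y)=\frac{1}{a^\beta}\int_0^\infty \omega(ax+b)\,f^\beta(x)\,a\,dx=\frac{1}{a^{\beta-1}}\int_0^\infty \omega(ax+b)\,f^\beta(x)\,dx,
\]
which is (\ref{eq2.4}).

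I do not expect any genuine obstacle here: the argument is a routine substitution, and the existence of the relevant integrals is assumed throughout the paper. The only points requiring a little care are the bookkeeping of the support (so that the lower limit of integration becomes $0$ rather than $-b/a$) and, if one wants full rigour, invoking absolute integrability of $\omega(ax+b)f^\beta(x)$ to justify the change of variable. Finally I would add the interpretive remark that, in contrast with (\ref{eq2.3}), the right-hand side of (\ref{eq2.4}) depends on $b$ through the argument of $\omega$, so that the GWIGF is shift-dependent; putting $\omega\equiv 1$ recovers (\ref{eq2.3}) as a special case.
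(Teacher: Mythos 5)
Your argument is correct and is precisely the ``simple arguments'' the paper alludes to when it omits the proof: write down the density $g(y)=\frac{1}{a}f\left(\frac{y-b}{a}\right)$ for $y>b$, substitute into Definition \ref{def2.1}, and change variables $y=ax+b$. The support bookkeeping and the final remark on shift-dependence are both accurate, so there is nothing to add.
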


\begin{proof}
	The proof follows using simple arguments. Thus, it is not presented.  
\end{proof}

{Next, we present a remark to show the importance of the proposed GWIGF over IGF. }

\begin{remark}
	 {From (\ref{eq2.3}), we observe that the previously proposed IGF is shift-independent. However, there are some applied contexts, such as reliability or mathematical neurobiology, where it is required to deal with some shift-dependent information measures. Note that knowing a component fails to operate, or a neuron to release spikes in a specified time duration, yields a relevantly different information from the case when such an event occurs in a different equally wide interval. In this situation, using the proposed GWIGF, one will get better result than by using the IGF proposed by \cite{golomb1966} since the GWIGF is shift-dependent.} This property reveals the importance of studying the proposed generating function in Definition \ref{def2.1}.
\end{remark}

For studying (\ref{eq2.4}) further, we consider $\omega(x)=x.$ In this case, we obtain
\begin{eqnarray}\label{eq2.5}
I_{\beta}^{\omega}(Y)=\frac{1}{a^{\beta-2}}\int_{0}^{\infty}xf^{\beta}(x)dx+\frac{b}{a^{\beta-1}}\int_{0}^{\infty}f^{\beta}(x)dx=\frac{1}{a^{\beta-2}}I_{\beta}^{x}(X)+\frac{b}{a^{\beta-1}}I_{\beta}(X),
\end{eqnarray} 
where $I_{\beta}^{x}(X)$ and $I(X)$ are respectively known as the GWIGF with weight $x$ and IGF, due to \cite{golomb1966}. {Next, we consider two distributions having equal value of  IGF but different values of GWIGF with corresponding weight $\omega(x)=x.$}

\begin{example}
Suppose $X_1$ and $X_2$ have respective PDFs
\begin{align}
f_{1}(u)=\begin{cases}
2u,~~\text{if} ~u\in(0,1),\\
0,~~\text{otherwise},
\end{cases} \mbox{and}~~
f_{2}(u)=\begin{cases}
2(1-u),~~\text{if} ~u\in(0,1),\\
0,~~\text{otherwise}.
\end{cases}
\end{align}	  
Here, we obtain 
$$I_{\beta}(X_1)=I_{\beta}(X_2)=\frac{2^{\beta}}{\beta+1},~~\beta\ge1.$$
But, 
$$I_{\beta}^{x}(X_1)=\frac{2^{\beta}}{\beta+2}~~\mbox{and}~~I_{\beta}^{x}(X_2)=2^{\beta}\mathbb{B}(2,\beta+1),~~\beta\ge1,$$
where $\mathbb{B}(\cdot,\cdot)$ denotes complete beta function. 
\end{example}

The Shannon entropy of order $k$ (a positive integer) of an RV $X$ is given by (see \cite{kharazmi2021jensen}) 
\begin{eqnarray}
H_{k}(X)=\int_{0}^{\infty}(-\log f(x))^{k} f(x)dx.
\end{eqnarray}
{In general, it is always of interest to see if there is any relationship between a newly proposed concept (here, GWIGF) with any known information measure.} In an earlier work by \cite{kharazmi2021jensen}, a new representation for IGF in terms of $H_{k}(X)$ has been provided. Along with the similar arguments, in the next proposition, we establish that the GWIGF in (\ref{eq2.1}) can be represented based on the weighted Shannon entropy of order $k$, defined as
\begin{eqnarray}\label{eq2.8}
H_{k}^{\omega}(X)=\int_{0}^{\infty}\omega(x)(-\log f(x))^{k} f(x)dx.
\end{eqnarray}

\begin{proposition}
	For an RV $X$, we have
	\begin{eqnarray}
	I_{\beta}^{\omega}(X)=\sum_{k=0}^{\infty}\frac{(1-\beta)^{k}}{k!}H_{k}^{\omega}(X),
	\end{eqnarray}
	where $H_{k}^{\omega}(X)(<\infty)$ is given in (\ref{eq2.8}).
\end{proposition}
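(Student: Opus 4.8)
The plan is to expand $f^{\beta}(x)$ around $\beta = 1$ using the exponential–logarithm identity $f^{\beta}(x) = f(x)\, f^{\beta-1}(x) = f(x)\, e^{(\beta-1)\log f(x)}$, and then invoke the Taylor series of the exponential function. First I would write, for each fixed $x$ with $f(x) > 0$,
\begin{eqnarray*}
f^{\beta}(x) = f(x)\, e^{(\beta-1)\log f(x)} = f(x) \sum_{k=0}^{\infty} \frac{(\beta-1)^{k}}{k!} \big(\log f(x)\big)^{k} = f(x) \sum_{k=0}^{\infty} \frac{(1-\beta)^{k}}{k!} \big(-\log f(x)\big)^{k},
\end{eqnarray*}
where the last equality just absorbs the sign $(-1)^k$ into both factors. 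Multiplying by the weight $\omega(x) \ge 0$ and integrating over $(0,\infty)$ then formally gives
\begin{eqnarray*}
I_{\beta}^{\omega}(X) = \int_{0}^{\infty} \omega(x) f^{\beta}(x)\, dx = \int_{0}^{\infty} \omega(x) f(x) \sum_{k=0}^{\infty} \frac{(1-\beta)^{k}}{k!} \big(-\log f(x)\big)^{k}\, dx = \sum_{k=0}^{\infty} \frac{(1-\beta)^{k}}{k!} H_{k}^{\omega}(X),
\end{eqnarray*}
using the definition of $H_{k}^{\omega}(X)$ in (\ref{eq2.8}) in the last step.

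The substantive point, and the main obstacle, is justifying the interchange of the infinite sum and the integral. I would handle this by a dominated-convergence / Tonelli argument: since the series for $e^{(\beta-1)\log f(x)}$ converges absolutely, one controls the partial sums by $\omega(x) f(x)\, e^{|\beta-1|\,|\log f(x)|} \le \omega(x) f(x)\big(f(x)^{\beta-1} + f(x)^{1-\beta}\big)$, and one assumes (as the statement's hypothesis $H_k^\omega(X) < \infty$ and the paper's blanket convention that all integrals exist effectively grant) that the relevant integrals are finite, so that the term-by-term integration is legitimate on the range of $\beta$ under consideration. This mirrors exactly the argument used by \cite{kharazmi2021jensen} for the unweighted IGF, the weight $\omega(x)$ playing no role beyond being a fixed non-negative multiplier, so I would simply remark that the interchange is valid under the stated integrability assumption and cite the analogy.

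Finally I would note the consistency checks that make the identity believable: at $\beta = 1$ only the $k=0$ term survives, giving $I_{1}^{\omega}(X) = H_{0}^{\omega}(X) = \int_{0}^{\infty}\omega(x) f(x)\, dx = E[\omega(X)]$, matching the first bullet observation after (\ref{eq2.2}); and differentiating the series once in $\beta$ and evaluating at $\beta = 1$ kills every term except $k=1$, yielding $\partial_{\beta} I_{\beta}^{\omega}(X)|_{\beta=1} = -H_{1}^{\omega}(X) = -H^{\omega}(X)$, which recovers the third bullet. These sanity checks are not part of the proof proper but I would include a sentence pointing them out, since they confirm the series has been normalized with the correct signs.
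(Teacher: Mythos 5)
Your proposal is correct and follows essentially the same route as the paper: the paper likewise writes $I_{\beta}^{\omega}(X)=E[\omega(X)e^{-(1-\beta)\log f(X)}]$, expands the exponential as $\sum_{k=0}^{\infty}\frac{(1-\beta)^{k}}{k!}(-\log f(X))^{k}$, and integrates term by term to recover $H_{k}^{\omega}(X)$. Your added remarks on justifying the sum--integral interchange and the consistency checks at $\beta=1$ go slightly beyond what the paper records, but the core argument is identical.
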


\begin{proof}
	We obtain
	\begin{eqnarray*}
	I_{\beta}^{\omega}(X)=E[\omega(X)f^{\beta-1}(X)]&=&E\left[\omega(X)e^{-(1-\beta)\log f(X)}\right]\\
	&=&E\left[\omega(X)\sum_{k=0}^{\infty}\frac{(1-\beta)^{k}}{k!}(-\log f(X))^{k}\right]\\
	&=&\sum_{k=0}^{\infty}\frac{(1-\beta)^{k}}{k!}\int_{0}^{\infty}\omega(x)(-\log f(x))^{k}f(x)dx.
	\end{eqnarray*}
Thus, the result is established. 
\end{proof}

Below, we obtain an inequality for the GWIGF given in (\ref{eq2.1}). {Specifically, in the following theorem, we establish lower and upper bounds of the GWIGF in terms of other weighted information generating functions.}
\begin{theorem}\label{th2.1}
	For any real number $\beta\ge 1$, the following inequality holds:
	\begin{eqnarray*}
		\left(I_{\frac{\beta+1}{2}}^{\sqrt{\omega}}(X)\right)^{2}\le 	I_{\beta}^{\omega}(X)\le \sqrt{	I_{2\beta-1}^{\omega^2}(X)}.
	\end{eqnarray*}
\end{theorem}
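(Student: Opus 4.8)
The plan is to derive both inequalities from a single tool, the Cauchy--Schwarz inequality, exploiting the normalization $\int_{0}^{\infty} f(u)\,du = 1$. The whole argument reduces to choosing the right factorization of the integrand $\omega(u) f^{\beta}(u)$ into a product of two non-negative functions. Throughout, the hypothesis $\beta \ge 1$ guarantees that every exponent appearing below (namely $\beta - \frac12 \ge \frac12$, $\frac{\beta+1}{2} \ge 1$, and $2\beta - 1 \ge 1$) is positive, so all the powers of $f$ are well defined, and the relevant integrals are assumed to exist.

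For the upper bound, I would write
\begin{eqnarray*}
I_{\beta}^{\omega}(X) = \int_{0}^{\infty} \bigl(\omega(u)\, f^{\beta - \frac12}(u)\bigr)\, \sqrt{f(u)}\,du,
\end{eqnarray*}
and apply Cauchy--Schwarz to the two factors $u \mapsto \omega(u) f^{\beta - \frac12}(u)$ and $u \mapsto \sqrt{f(u)}$. This yields
\begin{eqnarray*}
\bigl(I_{\beta}^{\omega}(X)\bigr)^{2} \le \left(\int_{0}^{\infty} \omega^{2}(u) f^{2\beta - 1}(u)\,du\right)\left(\int_{0}^{\infty} f(u)\,du\right) = I_{2\beta - 1}^{\omega^{2}}(X),
\end{eqnarray*}
since $\int_{0}^{\infty} f(u)\,du = 1$; taking square roots gives $I_{\beta}^{\omega}(X) \le \sqrt{I_{2\beta - 1}^{\omega^{2}}(X)}$.

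For the lower bound, I would factor instead as
\begin{eqnarray*}
I_{\frac{\beta+1}{2}}^{\sqrt{\omega}}(X) = \int_{0}^{\infty} \sqrt{\omega(u)}\, f^{\frac{\beta+1}{2}}(u)\,du = \int_{0}^{\infty} \sqrt{\omega(u) f^{\beta}(u)}\,\sqrt{f(u)}\,du,
\end{eqnarray*}
and apply Cauchy--Schwarz to $u \mapsto \sqrt{\omega(u) f^{\beta}(u)}$ and $u \mapsto \sqrt{f(u)}$, obtaining
\begin{eqnarray*}
\left(I_{\frac{\beta+1}{2}}^{\sqrt{\omega}}(X)\right)^{2} \le \left(\int_{0}^{\infty} \omega(u) f^{\beta}(u)\,du\right)\left(\int_{0}^{\infty} f(u)\,du\right) = I_{\beta}^{\omega}(X),
\end{eqnarray*}
again using $\int_{0}^{\infty} f(u)\,du = 1$. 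Concatenating the two displays gives the claimed chain of inequalities.

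There is essentially no deep obstacle here; the only points requiring a line of care are verifying that the two factorizations are algebraically correct --- in particular that the product of the chosen factors really recovers $\omega(u) f^{\beta}(u)$, which uses $\omega \ge 0$ and $f \ge 0$ so that all the square roots are real --- and noting that each application of Cauchy--Schwarz needs only the finiteness of the bounding quantities $I_{2\beta-1}^{\omega^{2}}(X)$ and $I_{\beta}^{\omega}(X)$, which is covered by the standing existence assumptions. One could alternatively phrase both steps as instances of H\"older's inequality with conjugate exponents $2$ and $2$, or view them as reflecting the log-convexity of generalized moments, but the direct Cauchy--Schwarz computation above is the cleanest route.
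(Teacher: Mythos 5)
Your proof is correct and takes essentially the same approach as the paper: the lower bound uses the identical Cauchy--Schwarz factorization $\sqrt{\omega(x)}\,f^{\beta/2}(x)\cdot f^{1/2}(x)$ together with $\int_0^\infty f=1$. For the upper bound the paper instead invokes Jensen's inequality for the convex function $\psi(x)=x^2$ with weight $f$ applied to $h(x)=\omega(x)f^{\beta-1}(x)$, but that is exactly your Cauchy--Schwarz step in disguise, so the two arguments coincide in substance.
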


\begin{proof}
	To establish $\left(I_{\frac{\beta+1}{2}}^{\sqrt{\omega}}(X)\right)^{2}\le 	I_{\beta}^{\omega}(X),$ the Cauchy-Schwartz (CS) inequality is used. For two real integrable functions $h_1(x)$ and $h_2(x)$, the CS inequality is 
	\begin{eqnarray}\label{eq2.10}
	\left(\int_{0}^{\infty}h_{1}(x) h_{2}(x)dx\right)^{2}\le \int_{0}^{\infty}h_{1}^{2}(x)dx  \int_{0}^{\infty}h_{2}^{2}(x)dx,
	\end{eqnarray}
	where all the integrals must exist. Now, set $h_1(x)=\sqrt{\omega(x)}f^{\frac{\beta}{2}}(x)$ and $h_2(x)=f^{\frac{1}{2}}(x)$. Using $h_1(x)$ and $h_2(x)$ in (\ref{eq2.10}), we obtain 
	\begin{eqnarray}\label{eq2.11}
	\left(I_{\frac{\beta+1}{2}}^{\sqrt{\omega}}(X)\right)^{2}=\left(\int_{0}^{\infty}\sqrt{\omega(x)}f^{\frac{\beta+1}{2}}(x)dx\right)^2\le \int_{0}^{\infty} \omega(x)f^{\beta}(x)dx \int_{0}^{\infty}f(x)dx=I_{\beta}^{\omega}(X).
	\end{eqnarray}
	Further, in order to establish $I_{\beta}^{\omega}(X)\le \sqrt{	I_{2\beta-1}^{\omega^2}(X)}$, we use Jensen's inequality for integration. Consider a positive-valued function $g(x)$ such that $\int_{0}^{\infty}g(x)dx=1.$ Then, for a convex function $\psi(\cdot),$ the Jensen's inequality is given by (see \cite{kharazmi2021jensen})
	\begin{eqnarray}\label{eq2.12}
	\psi\left(\int_{0}^{\infty}h(x)g(x)dx\right) \le \int_{0}^{\infty} \psi(h(x))g(x)dx,
	\end{eqnarray}
	where $h(\cdot)$ is a real-valued function. Set $g(x)=f(x)$, $\psi(x)=x^2$ and $h(x)=\omega(x)f^{\beta-1}(x)$. Then, from (\ref{eq2.12}), we obtain 
	\begin{eqnarray}\label{eq2.13}
	\left( I_{\beta}^{\omega}(X)\right)^2=\left(\int_{0}^{\infty}\omega(x)f^{\beta}(x)\right)^2&\le& \int_{0}^{\infty}\omega^2(x)f^{2\beta-2}(x)f(x)dx\nonumber\\
	&=&\int_{0}^{\infty}\omega^2(x)f^{2\beta-1}(x)dx\nonumber\\
	&=& I_{2\beta-1}^{\omega^2}(X).
	\end{eqnarray}
	Thus, the required result follows after combining (\ref{eq2.11}) and (\ref{eq2.13}).  This completes the proof. 
\end{proof}

{We observe that the result in Theorem \ref{th2.1} reduces to Theorem $2$ of \cite{kharazmi2021jensen} when the weight is equal to $1.$} The following example illustrates Theorem \ref{th2.1}. 
\begin{example}\label{ex2.3}
	Suppose $X$ has PDF $f(x)=\lambda e^{-\lambda x},$ $x>0$, $\lambda>0.$ For $\omega(x)=x$, some calculations lead to
	\begin{eqnarray*} I_{\frac{\beta+1}{2}}^{\sqrt{x}}(X)&=&\sqrt{\lambda^{\frac{\beta-2}{2}}\left(\frac{2}{\beta+1}\right)^{\frac{3}{2}}\Gamma\left(\frac{3}{2}\right)},~~
	 I^x_\beta(X)=\frac{\lambda^{\beta-2}}{\beta^2},\nonumber\\ I^{x^2}_{2\beta-1}(X)&=&\frac{\lambda^{2\beta-4}}{(2\beta-1)^3}\Gamma(3).
	 \end{eqnarray*}
\end{example}

In Figure $2(a)$, we plot the graphs of  	$\left(I_{\frac{\beta+1}{2}}^{\sqrt{x}}(X)\right)^{2},$ $I_{\beta}^{x}(X),$ and $\sqrt{	I_{2\beta-1}^{x^2}(X)}$ as in Example \ref{ex2.3}, for $\beta\ge1,$ which clearly justifies the result in Theorem \ref{th2.1}. 
 \begin{figure}[h!]\label{fig3}
	\centering
	\subfigure[]{\label{c1}\includegraphics[height=1.92in]{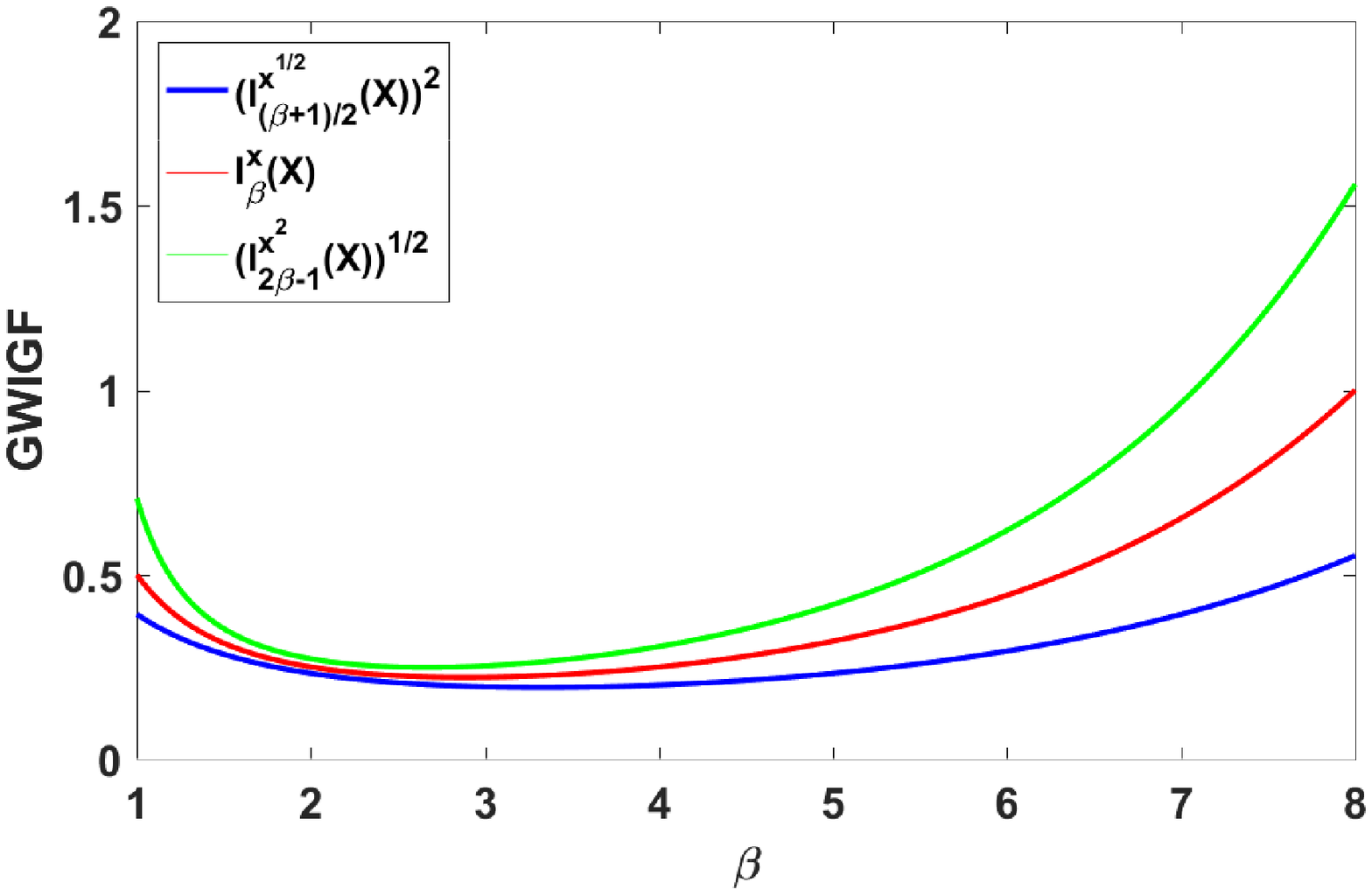}}
	\subfigure[]{\label{c1}\includegraphics[height=1.92in]{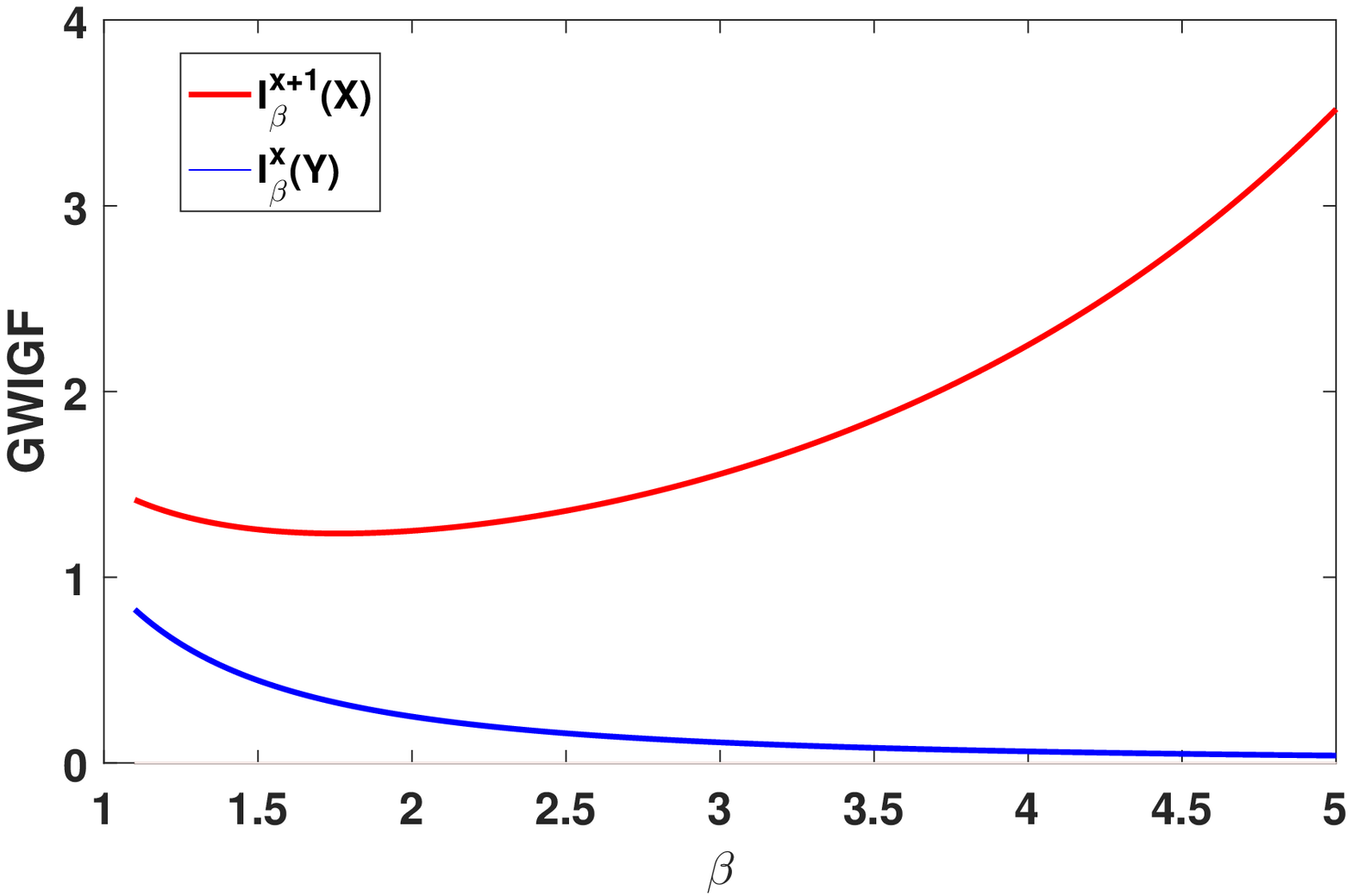}}
	\caption{$(a)$ Graphs of $\left(I_{\frac{\beta+1}{2}}^{\sqrt{x}}(X)\right)^{2},$ $I_{\beta}^{x}(X),$ and $\sqrt{	I_{2\beta-1}^{x^2}(X)}$ as in Example \ref{ex2.3}, for $\beta\ge1,$ when $\lambda=2.$ $(b)$ Graphs of the GWIGFs $I^{x+1}_\beta(X)$ and $I^{x}_\beta(Y)$ for $\lambda_1=2$, $\lambda_2=1$ and $\beta\geq1$  as in Example \ref{ex2.6}}
\end{figure}
Now we provide bounds of GWIGF based on weighted Shannon entropy and the hazard rate of $X$, given by $h(\cdot)=\frac{f(\cdot)}{\bar{F}(\cdot)},$ where $\bar{F}(x)=P(X>x).$ {We recall that the hazard/failure rate is defined as the (instantaneous) rate of failure for the survivors to time $t$ during the next instant of time. It is an important concept in survival analysis.}

\begin{theorem}\label{th2.2}
	Suppose $X$ has PDF $f(\cdot)$. The survival function of $X$ is denoted by $\bar{F}(\cdot)$. Then, for any $\beta\ge1,$ we have 
	\begin{eqnarray}
	L\le I_{\beta+1}^{\omega}(X)\le U,
	\end{eqnarray}
	where $L=\max\{0,E(\omega(X))-\beta H^{\omega}(X)\}$ and $U=E[\omega(X)(h(X))^{\beta}]$.
\end{theorem}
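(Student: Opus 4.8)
The plan is to treat the two bounds separately, each reducing to a one-line pointwise inequality that is then integrated against a nonnegative measure. Throughout I would read the integrals as taken over the support of $X$, so that $\bar F(x)>0$ there and the hazard rate $h(x)=f(x)/\bar F(x)$ is well defined.

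For the upper bound I would first rewrite $U$ as an integral: since $h(x)=f(x)/\bar F(x)$,
\[
U=E\bigl[\omega(X)h(X)^{\beta}\bigr]=\int_{0}^{\infty}\omega(x)\,\frac{f^{\beta+1}(x)}{\bar F^{\beta}(x)}\,dx .
\]
On the support of $X$ we have $0<\bar F(x)\le 1$, and since $\beta\ge 1>0$ this gives $\bar F^{\beta}(x)\le 1$, hence $1/\bar F^{\beta}(x)\ge 1$. Multiplying by $\omega(x)f^{\beta+1}(x)\ge 0$ and integrating yields $U\ge\int_{0}^{\infty}\omega(x)f^{\beta+1}(x)\,dx=I_{\beta+1}^{\omega}(X)$, which is the right-hand inequality.

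For the lower bound, note first that $I_{\beta+1}^{\omega}(X)\ge 0$ is immediate from $\omega\ge 0$ and $f\ge 0$; so it remains to show $I_{\beta+1}^{\omega}(X)\ge E(\omega(X))-\beta H^{\omega}(X)$, and taking the maximum of the two conclusions gives $I_{\beta+1}^{\omega}(X)\ge L$. The key elementary fact is $\log t\le t-1$ for all $t>0$. Applying it with $t=f^{\beta}(x)$ gives $\beta\log f(x)=\log f^{\beta}(x)\le f^{\beta}(x)-1$, i.e. $f^{\beta}(x)-1-\beta\log f(x)\ge 0$ for every $x$ (equivalently, the function $t\mapsto t^{\beta}-1-\beta\log t$ attains its minimum $0$ at $t=1$). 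Multiplying through by $\omega(x)f(x)\ge 0$ and integrating over $(0,\infty)$ yields
\[
\int_{0}^{\infty}\omega(x)f^{\beta+1}(x)\,dx-\int_{0}^{\infty}\omega(x)f(x)\,dx-\beta\int_{0}^{\infty}\omega(x)f(x)\log f(x)\,dx\ge 0,
\]
which, since $I_{\beta+1}^{\omega}(X)=\int_{0}^{\infty}\omega(x)f^{\beta+1}(x)\,dx$, $E(\omega(X))=\int_{0}^{\infty}\omega(x)f(x)\,dx$, and $H^{\omega}(X)=-\int_{0}^{\infty}\omega(x)f(x)\log f(x)\,dx$, is exactly $I_{\beta+1}^{\omega}(X)-E(\omega(X))+\beta H^{\omega}(X)\ge 0$. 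This completes the lower bound.

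I do not expect a genuine obstacle: each half collapses to monotonicity of $x^{\beta}$ (via $\bar F\le 1$) or to the logarithm inequality, integrated against the nonnegative weight $\omega(x)f(x)\,dx$. The only points needing a little care are (i) restricting the integrals to the support of $X$ so the division by $\bar F^{\beta}$ is legitimate, and (ii) keeping the sign convention for the weighted entropy $H^{\omega}$ consistent with the identity $\partial_{\beta}I_{\beta}^{\omega}(X)\big|_{\beta=1}=-H^{\omega}(X)$ recorded after Definition~\ref{def2.1}, so that the $\beta H^{\omega}(X)$ term enters the final inequality with the correct sign.
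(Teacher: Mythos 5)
Your proof is correct and is essentially the argument the paper intends: the paper itself only cites Theorem 4 of Kharazmi and Balakrishnan (2021), whose proof rests on exactly the two pointwise inequalities you use, namely $f=h\bar F\le h$ (equivalently $\bar F^{\beta}\le 1$) for the upper bound and $\log t\le t-1$ applied to $t=f^{\beta}(x)$ for the lower bound, integrated against $\omega(x)f(x)\,dx$. You have in fact supplied the details the paper omits, with the sign convention for $H^{\omega}$ handled correctly.
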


\begin{proof}
	The proof follows using similar arguments as in Theorem $4$ of \cite{kharazmi2021jensen}. 
\end{proof}

{ In the following example, we obtain the bounds of the GWIGF, stated in Theorem \ref{th2.2}.}
\begin{example}
	Let $X$ follow Weibull distribution with PDF 
	\begin{align}
	f(x)=\begin{cases}
	\frac{\delta}{\theta}(\frac{x}{\theta})^{\delta-1}e^{-(\frac{x}{\theta})^{\delta}},~~\text{if} ~x>0,\\
	0,~~\text{otherwise},
	\end{cases} 
	\end{align}	  
	where $\delta>0$ and $\theta>0.$ Considering $\omega(x)=x$, for this distribution, we obtain 
		\begin{eqnarray*}
			E[X]-\beta H^{x}(X)&=&\theta\Gamma(2)\left\{1-\beta\log\left(\frac{\delta}{\theta}\right)\right\}+\theta\beta\Gamma\left(2+\frac{1}{\delta}\right)-\theta\beta(\delta-1)\psi^*\left(\frac{1}{\delta}+1\right),\nonumber\\
			 I_{\beta}^{x}(X)&=&\frac{\delta^{\beta-1}\theta^{2-\beta}}{\beta^(\frac{\beta\delta-\beta+2}{\delta})}\Gamma\left(\frac{\beta\delta-\beta+1}{\delta}\right),\nonumber\\
		E[Xh^{\beta}(X)]&=&\frac{\delta^{\beta}}{\theta^{\beta-1}}\Gamma\left(\frac{\beta+1}{\delta}+1\right),
	\end{eqnarray*}
where $\beta\delta-\beta+1>0$ and $\psi^{*}(\cdot)$ is known as the digamma function. 
\end{example}

The result in the following remark immediately follows  from Theorem \ref{th2.2}, which gives bounds of the weighted extropy. 
\begin{remark}
	For $\beta=1$, we have 
	$$-\frac{1}{2}E(\omega(X)h(X))\le J^{\omega}(X)\le -\frac{1}{2}\max\{0,E(\omega(X)-H^{\omega}(X))\}.$$
\end{remark}

{Sometimes, it is very hard to obtain explicit expression of the GWIGF of a random
variable of interest. To overcome this difficulty, certain transformation can be employed for computing GWIGF
of a transformed random variable (new distribution) in terms of the GWIGF of a known
distribution. The following result provides an useful insight in this regard.} Next, we study the effect of GWIGF under an increasing/decreasing, differentiable and invertible function. 
\begin{theorem}\label{th2.3}
	Suppose an RV $X$ has PDF $f(\cdot)$. Then, for any real-valued monotone, differentiable and invertible function $\zeta(x)$, we have 
	 \begin{equation*}
I_{\beta}^{\omega}(\zeta(X))=\left\{
	\begin{array}{ll}
	\displaystyle\int_{\zeta^{-1}(0)}^{\zeta^{-1}(\infty)}\frac{\omega\big(\zeta(x)\big)f^\beta(x)}{\big(\zeta^{'}(x)\big)^{\beta-1}}dx,~\text{if  $\zeta$  is  strictly  increasing;}
	\\
	\\
	(-1)^{\beta-1}\displaystyle\int_{\zeta^{-1}(0)}^{\zeta^{-1}(\infty)}\frac{\omega\big(\zeta(x)\big)f^\beta(x)}{\big(\zeta^{'}(x)\big)^{\beta-1}}dx,~\text{if  $\zeta$  is  strictly  decreasing.}
	\end{array}
	\right.
	\end{equation*}
\end{theorem}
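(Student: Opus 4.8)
The plan is to compute the GWIGF of $Y=\zeta(X)$ directly from Definition \ref{def2.1} by substituting the density of the transformed variable and then performing the change of variable $y=\zeta(x)$. First I would write down the PDF of $Y$: since $\zeta$ is differentiable and invertible, the standard change-of-variables formula gives $g(y)=f\big(\zeta^{-1}(y)\big)\,\big|\,(\zeta^{-1})'(y)\,\big|=\dfrac{f\big(\zeta^{-1}(y)\big)}{\big|\zeta'(\zeta^{-1}(y))\big|}$, valid on the range of $\zeta$. Plugging this into $I_{\beta}^{\omega}(Y)=\int \omega(y)\,g^{\beta}(y)\,dy$ over the appropriate range $\big(\zeta(0),\zeta(\infty)\big)$ (or its reverse, depending on monotonicity) gives an integral in $y$ of $\omega(y)\,f^{\beta}\big(\zeta^{-1}(y)\big)\big/\big|\zeta'(\zeta^{-1}(y))\big|^{\beta}$.

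Next I would substitute $x=\zeta^{-1}(y)$, so that $y=\zeta(x)$ and $dy=\zeta'(x)\,dx$ — but here one must be careful with the sign of $dy$ and with the orientation of the limits. If $\zeta$ is strictly increasing, $\zeta'(x)>0$, the limits transform in the natural order from $\zeta^{-1}(0)$ to $\zeta^{-1}(\infty)$, and one factor of $\zeta'(x)$ from $dy$ cancels against one of the $\beta$ powers in the denominator, leaving $\omega\big(\zeta(x)\big)f^{\beta}(x)\big/\big(\zeta'(x)\big)^{\beta-1}$, which is exactly the first case. If $\zeta$ is strictly decreasing, then $\zeta'(x)<0$, so $\big|\zeta'(\zeta^{-1}(y))\big|=-\zeta'(x)$; moreover reversing the limits to run from $\zeta^{-1}(0)$ to $\zeta^{-1}(\infty)$ introduces a further sign change. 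Collecting these: $\big(|\zeta'|\big)^{-\beta}=(-1)^{\beta}\big(\zeta'\big)^{-\beta}$, one power of $\zeta'$ is absorbed by $dy=\zeta'(x)\,dx$, and the orientation reversal of the integral contributes one more factor of $-1$; the net sign is $(-1)^{\beta}\cdot(-1)=(-1)^{\beta+1}=(-1)^{\beta-1}$, giving the second case.

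The main obstacle — really the only subtle point — is bookkeeping the signs and the orientation of the integration limits in the decreasing case so that the power of $(-1)$ comes out as $(-1)^{\beta-1}$ and not, say, $(-1)^{\beta}$ or a stray $|{\cdot}|$. The cleanest way to handle this rigorously is to keep absolute values throughout until the very end: write $g(y)=f\big(\zeta^{-1}(y)\big)\big/\big|\zeta'(\zeta^{-1}(y))\big|$, note that the range of $\zeta$ is the open interval with endpoints $\zeta(0)$ and $\zeta(\infty)$ regardless of monotonicity, and then in the substitution write $\int_{\text{range of }\zeta} (\cdot)\,dy = \int_{\zeta^{-1}(0)}^{\zeta^{-1}(\infty)} (\cdot)\,|\zeta'(x)|\,dx$, which is orientation-independent. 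One power of $|\zeta'(x)|$ cancels, leaving $\omega\big(\zeta(x)\big)f^{\beta}(x)\big/\big|\zeta'(x)\big|^{\beta-1}$; finally, since $\zeta'$ has constant sign, $\big|\zeta'(x)\big|^{\beta-1}=\big(\zeta'(x)\big)^{\beta-1}$ when $\zeta$ is increasing and $=(-1)^{\beta-1}\big(\zeta'(x)\big)^{\beta-1}$ when $\zeta$ is decreasing, which yields the stated formula. I would also remark in passing that one should assume $\beta$ is such that $(-1)^{\beta-1}$ makes sense (e.g. $\beta$ an integer, or the decreasing case stated with $|\zeta'|^{\beta-1}$), but I would follow the paper's convention as written.
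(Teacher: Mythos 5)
Your proposal is correct and follows essentially the same route as the paper: write down the density of $\zeta(X)$ via the change-of-variables formula and substitute it into Definition \ref{def2.1} with the substitution $y=\zeta(x)$. The paper only carries out the strictly increasing case and dismisses the decreasing case as ``similar,'' so your explicit sign bookkeeping for the decreasing case (and your remark that the factor $(-1)^{\beta-1}$ is only meaningful for integer $\beta$, or should otherwise be written as $|\zeta'(x)|^{\beta-1}$) is more careful than, but not different in method from, the paper's argument.
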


\begin{proof} Here, we prove the result when $\zeta(\cdot)$ is increasing. The proof is similar when $\zeta(\cdot)$ is decreasing. Let $\zeta$ be strictly increasing. Then,
	after some simple calculations, the PDF of $\zeta(X)$ is
	\begin{eqnarray}\label{eq2.16}
	f_{\zeta}(u)=\frac{f(\zeta^{-1}(u))}{\zeta'(\zeta^{-1}(u))},~~0<u<\infty.
	\end{eqnarray}
	Now, using (\ref{eq2.16}) in (\ref{eq2.1}), the required result readily follows. 
\end{proof}

The following example illustrates Theorem \ref{th2.3}.
\begin{example}
	Suppose $X$ follows Pareto-I distribution with CDF $F(x)=1-x^{-c},~x>1$ and $c>0.$ Consider $\zeta(X)=X-1,$ which follows Lomax distribution. Using Theorem \ref{th2.3}, we obtain for $c=2$ as
	$$I_{\beta}^{x}(\zeta(X)=X-1)=\frac{2^{\beta}}{(1-3\beta)(2-3\beta)},$$
	which can be verified using direct calculations. 
\end{example}

{ The concept of stochastic orders play an important role in many fields of research. For example, in risk theory, the usual stochastic order is useful for a decision maker to choose better risk. Further, in actuarial science, the decision maker uses convex order to select preferable risk if the maker is risk averse. For details, please refer to \cite{shaked2007stochastic}.} We study stochastic comparison result between GWIGFs of two RVs by considering different weights. In this direction, the notion of dispersive order, denoted by $\le_{disp}$ (see Equation $(3.B.1)$ of \cite{shaked2007stochastic}) is required. {We recall that the dispersive order is used to compare the variability of two risk random variables.}
%First, we recall the notion of dispersive order. Let $X$ and $Y$ be two non-negative absolutely continuous RVs with CDFs $F(.)$, $G(.)$ and PDFs $f(.)$, $g(.),$ respectively. Then, $X$ is said to be smaller than $Y$ in the sense of the dispersive order, denoted by $X\le_{disp}Y$ if $f(F^{-1}(u))\ge g(G^{-1}(u))$, for $u\in (0,1).$ Further, $X$ is said to be smaller than $Y$ in the sense of the hazard rate order, denoted by $X\le_{hr}Y$ if $h(x)\ge s(x)$, for all $x>0$, where $h(x)$ and $s(x)$ are respective hazard rates of $X$ and $Y.$ For details, we refer to \cite{shaked2007stochastic}.

\begin{theorem}\label{th2.4}
	Suppose $X$ and $Y$ have CDFs $F(\cdot)$ and $G(\cdot)$ and PDFs $f(\cdot)$ and $g(\cdot)$, respectively.  Let $\omega_1(\cdot)$ be increasing. Then, for $\beta\ge1$,
$\omega_1(x)\geq (\le) \omega_2(x)$ and $X\leq_{disp}(\geq_{disp})Y$ together imply $I^{\omega_1}_\beta(X)\geq (\le)I^{\omega_2}_\beta(Y)$.
\end{theorem}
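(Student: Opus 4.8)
\emph{Proof proposal.} The plan is to pass to the quantile scale, where both the dispersive order and the GWIGF have transparent expressions, and then reduce the claim to a pointwise comparison of integrands. Using the probability integral substitution $u=F(x)$ (so $du=f(x)\,dx$) and the analogous one for $Y$, I would first rewrite
\[
I^{\omega}_{\beta}(X)=\int_{0}^{\infty}\omega(x)f^{\beta}(x)\,dx=\int_{0}^{1}\omega\big(F^{-1}(u)\big)\big[f\big(F^{-1}(u)\big)\big]^{\beta-1}\,du,
\]
and likewise $I^{\omega_2}_{\beta}(Y)=\int_{0}^{1}\omega_2\big(G^{-1}(u)\big)\big[g\big(G^{-1}(u)\big)\big]^{\beta-1}\,du$. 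It then suffices to show that, for each $u\in(0,1)$, the integrand coming from $X$ with weight $\omega_1$ dominates the one coming from $Y$ with weight $\omega_2$, and to integrate this over $u$.

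For the pointwise comparison I would use the density--quantile characterization of the dispersive order: $X\le_{disp}Y$ is equivalent to $G^{-1}-F^{-1}$ being nondecreasing, i.e. to $f\big(F^{-1}(u)\big)\ge g\big(G^{-1}(u)\big)$ for every $u\in(0,1)$. Since $\beta\ge1$, raising to the nonnegative power $\beta-1$ preserves this, giving $\big[f(F^{-1}(u))\big]^{\beta-1}\ge\big[g(G^{-1}(u))\big]^{\beta-1}\ge0$. On the weight side, $\omega_1\ge\omega_2$ pointwise yields $\omega_1\big(F^{-1}(u)\big)\ge\omega_2\big(F^{-1}(u)\big)$, and the monotonicity of $\omega_1$ is what allows $F^{-1}(u)$ to be traded for $G^{-1}(u)$ inside $\omega_1$, so that $\omega_1\big(F^{-1}(u)\big)\ge\omega_2\big(G^{-1}(u)\big)\ge0$. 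Multiplying the two displayed nonnegative inequalities and integrating over $u\in(0,1)$ gives $I^{\omega_1}_{\beta}(X)\ge I^{\omega_2}_{\beta}(Y)$; the parenthetical statement follows by reversing every inequality (the dispersive order is reversed, $\omega_1\le\omega_2$, and the monotonicity of $\omega_1$ is used the other way).

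The step I expect to be the genuine obstacle is precisely the weight comparison $\omega_1\big(F^{-1}(u)\big)\ge\omega_2\big(G^{-1}(u)\big)$: dispersiveness controls only the \emph{increments} $G^{-1}-F^{-1}$, not the values, so to move from $\omega_1(F^{-1}(u))$ to $\omega_2(G^{-1}(u))$ one must pin down how $F^{-1}(u)$ and $G^{-1}(u)$ themselves compare, which under the standing non-negativity assumption should come from the stochastic ordering that accompanies $X\le_{disp}Y$. An alternative route that sidesteps the quantile functions is to apply Theorem~\ref{th2.3} to the increasing ``spread function'' $\zeta=G^{-1}\circ F$, for which $\zeta(X)$ has the same distribution as $Y$ and $\zeta'(x)\ge1$: then $I^{\omega_2}_{\beta}(Y)=\int \omega_2\big(\zeta(x)\big)f^{\beta}(x)\big(\zeta'(x)\big)^{-(\beta-1)}\,dx\le\int \omega_2\big(\zeta(x)\big)f^{\beta}(x)\,dx$ because $\big(\zeta'(x)\big)^{\beta-1}\ge1$, after which one closes the gap by bounding $\omega_2\big(\zeta(x)\big)$ through $\omega_2\le\omega_1$ together with the monotonicity of $\omega_1$ applied to $\zeta(x)$ versus $x$. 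Either way, the monotonicity of $\omega_1$ and the case $\beta\ge1$ (needed so that $t\mapsto t^{\beta-1}$ is nondecreasing) are the two features that make the argument go through.
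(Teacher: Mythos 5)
Your strategy coincides with the paper's: both pass to the quantile scale, writing $I^{\omega}_{\beta}(X)=\int_{0}^{1}\omega(F^{-1}(u))f^{\beta-1}(F^{-1}(u))\,du$, both control the density factor through the characterization $f(F^{-1}(u))\ge g(G^{-1}(u))$ of $X\le_{disp}Y$, and both must then compare $\omega_1(F^{-1}(u))$ with $\omega_2(G^{-1}(u))$. The step you single out as the genuine obstacle is precisely the one the paper dispatches in a single line, by asserting that $X\le_{disp}Y$ implies $F^{-1}(u)\ge G^{-1}(u)$ for all $u\in(0,1)$ and then chaining $\omega_1(F^{-1}(u))\ge\omega_1(G^{-1}(u))\ge\omega_2(G^{-1}(u))$.

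Your hesitation at that step is warranted, and you should not expect to close it the way you suggest: for non-negative variables the stochastic comparison that accompanies $X\le_{disp}Y$ goes the \emph{wrong} way. Letting $u\downarrow 0$ in the defining inequality $F^{-1}(v)-F^{-1}(u)\le G^{-1}(v)-G^{-1}(u)$ gives $F^{-1}(v)\le G^{-1}(v)$ when the left endpoints of support agree (e.g.\ both equal $0$), so for increasing $\omega_1$ one gets $\omega_1(F^{-1}(u))\le\omega_1(G^{-1}(u))$, which cannot be chained with $\omega_1\ge\omega_2$ in the required direction; your alternative route via $\zeta=G^{-1}\circ F$ hits the same wall because $\zeta(x)\ge x$. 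This is not a removable technicality. Take $\omega_1(x)=\omega_2(x)=x$ (permitted by the remark following the theorem), $X$ exponential with rate $2$, $Y$ exponential with rate $1$, and $\beta=1$: then $X\le_{disp}Y$, $\omega_1$ is increasing and $\omega_1\ge\omega_2$, yet $I^{\omega_1}_{1}(X)=E[X]=1/2<1=E[Y]=I^{\omega_2}_{1}(Y)$. So neither your argument nor the paper's establishes the statement as written; the chain does close if $\omega_1$ is instead assumed decreasing, or if one hypothesizes the comparison $\omega_1(F^{-1}(u))\ge\omega_2(G^{-1}(u))$ directly.
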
 

\begin{proof}
	We know that $X\leq_{disp}(\geq_{disp})Y$ implies $F^{-1}(u)\geq (\le)G^{-1}(u),$ for $u\in(0,1)$.
	Further, under the assumptions made, $\omega_1(x)$ is increasing and $\omega_1(x)\ge(\le)\omega_2(x)$. Thus, we obtain $\omega_1(F^{-1}(u))\geq(\le) \omega_1(G^{-1}(u))\geq(\le)\omega_2(G^{-1}(u))$. 
	Now, for $\beta\ge1,$
	\begin{eqnarray*}
		I^{\omega_1}_\beta(X)
		=\int^1_0\omega_1(F^{-1}(u))f^{\beta-1}(F^{-1}(u))du
		\ge(\le)\int^1_0\omega_2(G^{-1}(u))g^{\beta-1}(G^{-1}(u))du
		=I^{\omega_2}_\beta(Y).
	\end{eqnarray*}
	Hence, the theorem is established.
\end{proof}  

\begin{remark}
	The result in Theorem \ref{th2.4} also holds if two weights are same, that is, $\omega_{1}(x)=\omega_{2}(x).$ { Further, note that if $\omega_{1}(x)=\omega_{2}(x)=1$, Theorem \ref{th2.4} becomes Theorem $6(ii)$ of \cite{kharazmi2021jensen}.}
\end{remark}

\begin{remark}
	It is well-known that for  two RVs $X$ and $Y$, $X\le_{hr}Y$ (see Equation $(1.B.2)$ of \cite{shaked2007stochastic})  implies $X\le_{disp}Y$ if either $X$ or $Y$ has decreasing failure rate (DFR). Thus, the result in Theorem \ref{th2.4} also holds if we replace the condition dispersive ordering by the hazard rate ordering with DFR property of either $X$ or $Y$. 
\end{remark}

The following example is considered for illustrating Theorem \ref{th2.4}.

\begin{example}\label{ex2.6}
	Suppose $X$ and $Y$ have CDFs $F(x)=1-e^{-\lambda_1 x}$ and $G(x)=1-e^{-\lambda_2 x}$; PDFs $f(x)=\lambda_1 e^{-\lambda_1 x}$ and $g(x)=\lambda_2 e^{-\lambda_2 x},$ respectively, where $x>0,~\lambda_1,\lambda_2>0$. Suppose $\lambda_1\ge \lambda_2.$ Here, $f(F^{-1}(u))=\lambda_1(1-u)$ and $g(G^{-1}(u))=\lambda_2(1-u)$, where $0<u<1.$ Clearly, $f(F^{-1}(u))\ge g(G^{-1}(u))$ implies $X\le_{disp}Y$ holds. Further, assume that $\omega_1(x)=x+1$ and $\omega_{2}(x)=x,$ for $x>0.$  The GWIGFs are obtained as
	\begin{eqnarray}\label{eq2.17}
		I^{x+1}_\beta(X)=\frac{\lambda_1^\beta}{(\lambda_1 \beta)^2}(\lambda_1 \beta+1) ~~and ~~ I^{x}_\beta(Y)=\frac{\lambda_2^\beta}{(\lambda_2 \beta)^2},
	\end{eqnarray}
  which are plotted in Figure $2(b)$,  showing that  $I^{x+1}_\beta(X)\geq I^{x}_\beta(Y)$, for $\beta\geq1$.
\end{example}

The concepts of varentropy and weighted varentropy are of recent interest to study the variation of the information content and { weighted information content of a RV, respectively. It well known that the weighted varentropy is not affine invariant. However, the varentropy is affine invariant. Moreover, the varentropy does not
distinguish variability in the information content of a random process in different time points. This drawback can be resolved if we use weighted varentropy. Note that the use of weighted varentropy is also motivated by the necessity, arising in various communication and transmission problems, of expressing the usefulness of events with a variance measure.} Here, we establish a connection between the weighted varentropy and varentropy with proposed GWIGF. Define,
   \begin{eqnarray}
\ce{_{k}D^{\omega^2}_\beta}(X)=\frac{\partial^k}{\partial \beta^k}I^{\omega^2}_\beta(X)~~\mbox{and} ~~\ce{_{k-1}D^{\omega}_\beta}(X)=\frac{\partial^{k-1}}{\partial \beta^{k-1}}I^{\omega}_\beta
(X).
\end{eqnarray}
Using these notations, the weighted varentropy of $X$ with weight $\omega(x)$ is 
\begin{eqnarray}\label{eq2.19}
VE^\omega(X)&=&\ce{_{2}D^{\omega^2}_1}(X)-\big(\ce{_{1}D^{\omega}_1}(X)\big)^2\nonumber\\
&=&\int_{0}^{\infty}\omega^{2}(u)f(u)(\log f(u))^{2}du-\left(\int_{0}^{\infty}\omega(u)f(u)\log f(u)du\right)^2\nonumber\\
&=&E\left((\omega(X)\log f(X))^{2}\right)-\left(E(\omega(X)\log f(X))\right)^2.
\end{eqnarray}
The weighted varentropy due to \cite{saha2024weighted} can be obtained from (\ref{eq2.19}) for $\omega(x)=x.$ Further, if we assume $\omega(x)=1$, the varentropy can be deduced from (\ref{eq2.19}). It is worth mentioning that one of the early appearances of the varentropy is when it was characterised as the minimal coding variance in \cite{kontoyiannis1997second}, and then more recently when it was identified as the dispersion in source coding in \cite{kontoyiannis2013optimal}. For some notions on varentropy, see \cite{fradelizi2016optimal}.
\\

We end this section with an upper bound for GWIGF with $\omega(x)=x$ of the sum of two independent RVs. { One can find applications of the sum of two random variables in different fields. For example, in risk theory the sum of two random variables represents the combined risk of two assets. In signal processing, the sum of the random variables can represent the combined effect of noise in a communication channel. In reliability analysis, the sum of two random variables represent the total lifetime of a system having several components. To obtain upper bound, here we use the concept of convolution, which has many applications in information theory, physics and engineering. We recall that de Bruijn identity relates entropy with Fisher information, which can be deduced as a special case of an immediate generalization of Price’s theorem, which is a tool used in the analysis of
nonlinear memoryless systems having Gaussian inputs. For details,  see \cite{park2012equivalence}.} The PDF of the sum $Z$ of two independent RVs $X$ and $Y$ with respective PDFs $f(\cdot)$ and $g(\cdot)$ is 
\begin{eqnarray}\label{eq2.18}
f_{Z}(z)=\int_{0}^{z}f(x)g(z-x)dx,~~z>0.
\end{eqnarray}

\begin{theorem}\label{th2.5}
	Suppose two independent RVs $X$ and $Y$ have PDFs $f(\cdot)$ and $g(\cdot)$, respectively. Then, for $Z=X+Y$ and $\omega(x)=x,$ we have
	\begin{eqnarray}
	I_{\beta}^{x}(Z)\le I_{\beta}(X)I_{\beta}^{y}(Y)+I_{\beta}^{x}(X)I_{\beta}(Y),~~\beta\ge1.
	\end{eqnarray}
\end{theorem}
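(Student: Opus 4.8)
The plan is to start from the convolution formula \eqref{eq2.18} for the density $f_Z$ of $Z=X+Y$, substitute it into the definition \eqref{eq2.1} of the GWIGF with weight $\omega(z)=z$, and then bound the $\beta$-th power of the convolution integral. Writing $z = x + (z-x)$ inside the weight, we have
\begin{eqnarray*}
I_{\beta}^{x}(Z)=\int_{0}^{\infty} z\left(\int_{0}^{z} f(x)g(z-x)\,dx\right)^{\beta} dz
=\int_{0}^{\infty}\left(\int_{0}^{z}\big(x+(z-x)\big) f(x)g(z-x)\,dx\right)\left(\int_{0}^{z} f(x)g(z-x)\,dx\right)^{\beta-1} dz.
\end{eqnarray*}
The key inequality I would invoke is $\left(\int_0^z f(x)g(z-x)\,dx\right)^{\beta-1}\le \int_0^z f^{\beta-1}(x)g^{\beta-1}(z-x)\,dx$ type reasoning — more precisely, since $\beta\ge 1$ and a convolution of densities integrates to at most $1$ on $(0,z)$... actually the cleaner route is to note $f_Z(z)^{\beta}=f_Z(z)\cdot f_Z(z)^{\beta-1}$ and apply the power-mean / Jensen inequality to the probability-type averaging, or alternatively to directly use the elementary bound $\left(\int_0^z h(x)\,dx\right)^{\beta}\le$ something multiplicative. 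The honest workhorse here is the inequality $\left(\int_0^z f(x)g(z-x)dx\right)^{\beta-1} \le \left(\int_0^z f^{\beta-1}(x) g^{\beta-1}(z-x)\, dx\right)\big/(\text{normalizer})$; I would instead lean on the simplest available tool, namely that for $\beta \ge 1$ the function $t\mapsto t^{\beta}$ is superadditive-free but $\|f*g\|$-type bounds follow from Young's convolution inequality.

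Concretely, the steps I would carry out are: (i) split $I_{\beta}^{x}(Z)$ using $z=x+(z-x)$ into two terms, one carrying the factor $x$ against $f$ and one carrying $z-x$ against $g$; (ii) in each term, bound $f_Z(z)^{\beta-1}=\left(\int_0^z f(u)g(z-u)\,du\right)^{\beta-1}$ using Jensen's inequality \eqref{eq2.12} — treating, say, $g(z-u)/\bar{?}$ as a weight is awkward because it need not integrate to $1$, so instead I would apply the inequality $\left(\int_0^z f(u)g(z-u)du\right)^{\beta-1}\le \int_0^z f^{\beta-1}(u)g^{\beta-1}(z-u)\,du$ valid when the inner integral is $\le 1$, which holds since $\int_0^z f(u)g(z-u)\,du \le \int_0^\infty f(u)\,du \cdot \sup(\cdot)$... (this is the delicate point, see below); (iii) combine the pieces, interchange the order of integration via Tonelli (all integrands nonnegative), and perform the change of variables $z-x\mapsto y$ so that the double integrals factor as products $\int x f^{\beta}(x)\,dx \cdot \int g^{\beta}(y)\,dy$ and $\int f^{\beta}(x)\,dx \cdot \int y g^{\beta}(y)\,dy$; (iv) recognize these four integrals as $I_{\beta}^{x}(X)$, $I_{\beta}(Y)$, $I_{\beta}(X)$, $I_{\beta}^{y}(Y)$ respectively, yielding the claimed bound $I_{\beta}^{x}(Z)\le I_{\beta}(X)I_{\beta}^{y}(Y)+I_{\beta}^{x}(X)I_{\beta}(Y)$.

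The main obstacle is justifying the bound $\left(\int_{0}^{z} f(x)g(z-x)\,dx\right)^{\beta-1}\le \int_{0}^{z} f^{\beta-1}(x)g^{\beta-1}(z-x)\,dx$ (or whatever substitute makes the product factorization go through). For $\beta\ge 2$ this follows from the fact that $\ell^{\beta-1}$-norms dominate $\ell^{1}$-norms only after normalization, so one genuinely needs the convolution to be a sub-probability density on $(0,z)$, which is not automatic. The clean fix, which I expect the authors use, is to apply Jensen's inequality \eqref{eq2.12} with the genuine probability density obtained by normalizing, or — more likely — to observe that since $\int_0^z f(x)g(z-x)\,dx$ is bounded above appropriately, one can write $f_Z(z)^{\beta}\le \left(\int_0^z f(x)g(z-x)\,dx\right)\cdot\max_{x}\big(f(x)g(z-x)\big)^{\beta-1}$ — but that loses the factorized form. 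I would therefore expect the actual proof to route through the inequality $\big(\int h\big)^{\beta}\le \int h^{\beta}$ holding for $h$ supported where it is $\le$ its integral, or simply to cite/adapt a known convolution bound for IGFs (analogous to the IGF-of-a-sum bound $I_{\beta}(X+Y)\le I_{\beta}(X)I_{\beta}(Y)$, if such appears in the cited literature), and then attach the weight split $z=x+(z-x)$ on top of it. Once that convolution inequality is in hand, the rest is the routine Tonelli interchange and change of variables sketched above.
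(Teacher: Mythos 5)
Your proposal follows essentially the same route as the paper's proof: substitute the convolution formula (\ref{eq2.18}) into the definition of $I_{\beta}^{x}(Z)$, push the power $\beta$ inside the inner integral via the pointwise bound $\left(\int_{0}^{z}f(x)g(z-x)\,dx\right)^{\beta}\le\int_{0}^{z}f^{\beta}(x)g^{\beta}(z-x)\,dx$, then interchange the order of integration, change variables, and split the weight $z=x+(z-x)$ to obtain the two products. The paper carries out precisely your steps (i), (iii) and (iv), and disposes of your step (ii) with the single phrase ``the inequality follows from Jensen's inequality,'' offering no normalization or further argument.

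The obstacle you flagged at step (ii) is genuine, and it is not resolved in the paper. Jensen's inequality applied with the probability measure $g(z-x)\,dx/\int_{0}^{z}g(z-u)\,du$ only yields $\left(\int_{0}^{z}f(x)g(z-x)\,dx\right)^{\beta}\le\int_{0}^{z}f^{\beta}(x)g(z-x)\,dx$, with $g$ to the \emph{first} power, which does not factor into the claimed product; the version with $g^{\beta}$ requires instead the bound $\left(\int_{D}h\right)^{\beta}\le |D|^{\beta-1}\int_{D}h^{\beta}$ applied to $h(x)=f(x)g(z-x)$, and this helps only when the overlap of supports $D$ has Lebesgue measure at most one. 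Without such a restriction the pointwise step is false: for $f=g$ uniform on $(0,2)$ and $z=2$ one has $\left(\int_{0}^{2}f(x)g(2-x)\,dx\right)^{2}=1/4$ while $\int_{0}^{2}f^{2}(x)g^{2}(2-x)\,dx=1/8$. Worse, the theorem itself fails for this pair: with $X,Y$ i.i.d.\ uniform on $(0,2)$ and $\beta=2$, a direct computation gives $I_{2}^{x}(Z)=2/3$ whereas $I_{2}(X)I_{2}^{y}(Y)+I_{2}^{x}(X)I_{2}(Y)=1/4+1/4=1/2$. So your suspicion that ``one genuinely needs the convolution to be a sub-probability density on $(0,z)$, which is not automatic'' is exactly right, the gap cannot be closed by any normalization trick, and the inequality as stated requires an additional hypothesis (for instance, supports of length at most one) that neither your sketch nor the paper imposes.
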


\begin{proof}
From (\ref{eq2.1}) and (\ref{eq2.18}), we have 
\begin{eqnarray}\label{eq2.20}
I_{\beta}^x(Z)&=&\int_{0}^{\infty}z\left(\int_{0}^{z}f(x)g(z-x)dx\right)^{\beta}dz\nonumber\\
&\le&\int_{0}^{\infty}z \left(\int_{0}^{z}f^\beta(x)g^\beta(z-x)dx\right)dz\nonumber\\
&=& \int_{0}^{\infty}f^{\beta}(x) \left(\int_{x}^{\infty}z g^{\beta}(z-x)dz\right)dx\nonumber\\
&=& \int_{0}^{\infty}f^{\beta}(x)\left(\int_{0}^{\infty} (z+x)g^{\beta}(z)dz\right)dx,
\end{eqnarray}	
where the inequality follows from Jensen's inequality. Now, the desired result readily follows from (\ref{eq2.20}). 
\end{proof}

\begin{remark}
	For independently and identically distributed RVs $X$ and $Y$, we have from Theorem \ref{th2.5} as $$I_{\beta}^{x}(X+Y)\le 2I_{\beta}(X)I_{\beta}^{x}(X).$$
\end{remark}

\section{General weighted relative information generating function}
The RIGF was introduced by \cite{guiasu1985relative}. The authors showed that the KL divergence can be generated from the RIGF after evaluating the derivative at $1$. This section is devoted to introduce general weighted relative information generating function (GWRIGF) and study its properties. {Recall that employing weight function in statistical analysis emphasises importance to certain observations, which are more valuable than others.}

\begin{definition}\label{def3.1}
	Suppose $X$ and $Y$ have PDFs $f(\cdot)$ and $g(\cdot),$ respectively. Then, for any  non-negative real-valued measurable function $\omega(x)$ and $\beta\ge1$, the GWRIGF is 
	\begin{eqnarray}\label{eq3.1}
	R^\omega_\beta(X,Y)=\int_{0}^{\infty}\omega(x)f^\beta(x)g^{1-\beta}(x)dx,
	\end{eqnarray}
	provided the integral exists.
\end{definition}
Clearly, $R_{\beta}^{\omega}(X,Y)\ge0.$ {For $\omega(x)=1,$ the GWRIGF in (\ref{eq3.1}) becomes RIGF proposed by \cite{guiasu1985relative}.} Further, when $g(x)=1$, that is $Y$ has uniform distribution,  (\ref{eq3.1}) reduces to GWIGF given in (\ref{eq2.1}). Differentiation of (\ref{eq3.1}) $k$-times with respect to $\beta$ is
\begin{eqnarray}\label{eq3.2}
\frac{\partial^k R^\omega_\beta(X,Y)}{\partial \beta^k} =\int_{0}^{\infty}\omega(x)\bigg(\log \frac{f(x)}{g(x)}\bigg)^k\bigg(\frac{f(x)}{g(x)}\bigg)^\beta g(x)dx,
\end{eqnarray}
provided that the integral converges. The following observations can be obtained in a straightforward manner:
\begin{itemize}
	\item $R^\omega_\beta(X,Y)|_{\beta=1}=E_{f}(\omega(X)),$ $R^\omega_\beta(X,Y)|_{\omega(x)=1,\beta=1}=1;$ 
	\item $\frac{\partial R^\omega_\beta(X,Y)}{\partial \beta}|_{\beta=1}=KL^{\omega}(X,Y),$ $\frac{\partial R^\omega_\beta(X,Y)}{\partial \beta}|_{\omega(x)=1,\beta=1}=KL(X,Y);$
	\item $R^\omega_\beta(X,Y)=R_{1-\beta}^{\omega}(Y,X)$,
\end{itemize}
where $KL(X,Y)=\int_{0}^{\infty}f(u)\log \frac{f(u)}{g(u)}du$ is the well-known KL divergence (see \cite{kullback1951information}) and  $KL^{\omega}(X,Y)=\int_{0}^{\infty}\omega(u)f(u)\log \frac{f(u)}{g(u)}du$ is known as the weighted KL divergence (see \cite{suhov2016basic}). The following example deals with the closed-form expressions of the GWRIGF between two exponential and two Pareto distributions.

\begin{example}\label{ex3.1}~
	\begin{itemize}
	\item[(i)] Let $X$ and $Y$ follow exponential distributions with respective rate parameters $\lambda_1$ and $\lambda_2.$  Further, let $\omega(x)=x^{m},~x>0,~m>0$. After some calculations, we get
	\begin{eqnarray}\label{eq3.3}
	R_{\beta}^{x^m}(X,Y)=\frac{\Gamma(m+1)\lambda_{1}^{\beta}\lambda_{2}^{1-\beta}}{\{\lambda_1\beta+\lambda_{2}(1-\beta)\}^{m+1}},~\beta\ge1.
	\end{eqnarray}
	Further, 
		\begin{eqnarray}\label{eq3.4}
	R_{\beta}^{x^m}(Y,X)=\frac{\Gamma(m+1)\lambda_{2}^{\beta}\lambda_{1}^{1-\beta}}{\{\lambda_2\beta+\lambda_{1}(1-\beta)\}^{m+1}},~\beta\ge1.
	\end{eqnarray}
	From (\ref{eq3.3}) and (\ref{eq3.4}), clearly $R_{\beta}^{x^m}(X,Y)\ne R_{\beta}^{x^m}(Y,X)$ in general, that is the GWRIGF is not symmetric. 
	
	\item[(ii)] Let $X$ and $Y$ follow Pareto distributions with PDFs $f(x)=\frac{\alpha_1}{x^{\alpha_1+1}},~x>1$ and $g(x)=\frac{\alpha_2}{x^{\alpha_2+1}},~x>1$, respectively. Then, for $\omega(x)=x^{m},~x>0,~m>0,$ we compute GWRIGF as
	\begin{eqnarray}
	R_{\beta}^{x^{m}}(X,Y)=\frac{\alpha_1^\beta \alpha_2^{1-\beta}}{(\alpha_1+1)\beta+(\alpha_2+1)(1-\beta)-1-m},~\beta\ge1,
	\end{eqnarray}
	provided $m-(\alpha_1+1)\beta-(\alpha_2+1)(1-\beta)+1<0.$
	\end{itemize}
\end{example}

To see the behaviour of GWRIGF with respect to $\beta$, we plot (\ref{eq3.3}) in Figure $3$, for $m=1.$  
\begin{figure}[h]
	\begin{center}
	%	\subfigure[]{\label{c1}\includegraphics[height=1.9in]{GWRI_1.eps}}
		\includegraphics[height=3.5in]{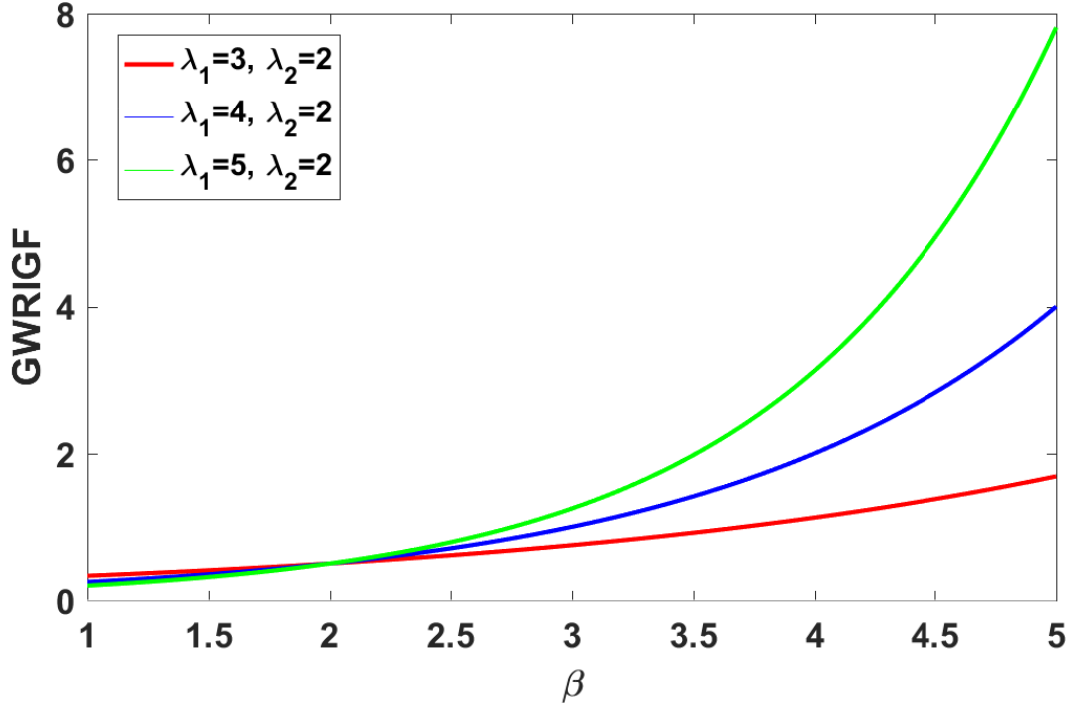}
		\caption{Plots of the GWRIGF of exponential distributions for different values of $\lambda_1$ and $\lambda_2$ in Example \ref{ex3.1}.}	
	\end{center}
\end{figure}

{ Sometimes, it is required to study the information measures for two transformed random variables, of which the density functions are difficult to derive. The following result is useful to obtain the effect of GWRIGF for two transformed random variables.} Here, we consider strictly monotone transformations, {which have wide applications in several fields, such as signal processing, machine learning and finance.}

\begin{theorem}\label{th3.1}
	Suppose $X$ and $Y$ have PDFs $f(\cdot)$ and $g(\cdot),$ respectively and $\psi(\cdot)$ is strictly monotonic, differential and invertible function. For weight $\omega(x)\ge0,$
	\begin{equation*}
	R^\omega_\beta(\psi(X),\psi(Y))=\left\{
	\begin{array}{ll}
	\int_{\psi^{-1}(0)}^{\psi^{-1}(\infty)}\omega(\psi(x))f^\beta(x)g^{1-\beta}(x)dx,~~\text{if  $\psi$  is  strictly  increasing;}
	\\
	\\
	-\int_{\psi^{-1}(0)}^{\psi^{-1}(\infty)}\omega(\psi(x))f^\beta(x)g^{1-\beta}(x)dx,~~\text{if  $\psi$  is  strictly  decreasing.}
	\end{array}
	\right.
	\end{equation*}
\end{theorem}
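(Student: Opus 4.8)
The plan is to derive the density of the transformed variables and substitute directly into the definition \eqref{eq3.1} of the GWRIGF, exactly as was done for the GWIGF in Theorem \ref{th2.3}. First I would treat the strictly increasing case: since $\psi$ is strictly monotone, differentiable and invertible, the standard change-of-variables formula gives the PDFs of $\psi(X)$ and $\psi(Y)$ as
\begin{eqnarray*}
f_{\psi}(u)=\frac{f(\psi^{-1}(u))}{\psi'(\psi^{-1}(u))}\quad\text{and}\quad g_{\psi}(u)=\frac{g(\psi^{-1}(u))}{\psi'(\psi^{-1}(u))},\qquad u\in(\psi(0),\psi(\infty)),
\end{eqnarray*}
the same $\psi'$ appearing in both (this is the key point that makes the Jacobian factors cancel). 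Plugging these into \eqref{eq3.1} with weight $\omega$ gives
\begin{eqnarray*}
R^\omega_\beta(\psi(X),\psi(Y))=\int_{\psi(0)}^{\psi(\infty)}\omega(u)\left(\frac{f(\psi^{-1}(u))}{\psi'(\psi^{-1}(u))}\right)^{\beta}\left(\frac{g(\psi^{-1}(u))}{\psi'(\psi^{-1}(u))}\right)^{1-\beta}du,
\end{eqnarray*}
and the $\psi'$ powers combine to $(\psi')^{-\beta-(1-\beta)}=(\psi')^{-1}$. Then the substitution $x=\psi^{-1}(u)$, $dx=du/\psi'(\psi^{-1}(u))$ absorbs exactly that remaining factor, turning the integral into $\int_{\psi^{-1}(0)}^{\psi^{-1}(\infty)}\omega(\psi(x))f^\beta(x)g^{1-\beta}(x)\,dx$, which is the claimed expression.

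For the strictly decreasing case the only change is orientation: $\psi$ maps $(0,\infty)$ onto $(\psi(\infty),\psi(0))$ with $\psi'<0$, the density formulas carry an absolute value $|\psi'|=-\psi'$, and when one performs the substitution $x=\psi^{-1}(u)$ the limits of integration flip. Carrying the sign bookkeeping through — the $|\psi'|^{-1}$ from the Jacobian cancellation against the $-1$ from reversing the limits — yields the stated minus sign in front of the integral $\int_{\psi^{-1}(0)}^{\psi^{-1}(\infty)}\omega(\psi(x))f^\beta(x)g^{1-\beta}(x)\,dx$ (here $\psi^{-1}(0)>\psi^{-1}(\infty)$, so the signed integral is consistent with a positive quantity). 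I would note that this exactly parallels the $(-1)^{\beta-1}$ phenomenon in Theorem \ref{th2.3}, except that here the two Jacobian powers $\beta$ and $1-\beta$ sum to $1$ rather than to $\beta$, so only a single sign factor survives rather than $(-1)^{\beta-1}$.

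There is essentially no hard analytic obstacle here — the computation is routine once the change-of-variables densities are written down. The only point requiring care is the sign and limit-orientation bookkeeping in the decreasing case, and making explicit that the cancellation of Jacobians works precisely because $f$ and $g$ are pushed forward under the \emph{same} map $\psi$, so their Jacobian factors appear with exponents $\beta$ and $1-\beta$ that add to one. I would also briefly remark that existence of the transformed integral is inherited from existence of the original one via the same substitution, consistent with the blanket assumption in the paper that all relevant integrals exist. Given the close analogy with the already-proved Theorem \ref{th2.3}, I would present the increasing case in full and simply state that the decreasing case follows by the same argument with the obvious sign adjustment.
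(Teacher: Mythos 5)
Your proposal is correct and follows essentially the same route as the paper's proof: compute the densities of $\psi(X)$ and $\psi(Y)$ by the change-of-variables formula, substitute into the definition (\ref{eq3.1}), note that the Jacobian factors appear with exponents $\beta$ and $1-\beta$ summing to one, and absorb the remaining $(\psi')^{-1}$ via the substitution $x=\psi^{-1}(u)$; the decreasing case is handled by the same sign bookkeeping the paper leaves implicit. Your added remarks on why the Jacobians cancel and on the contrast with the $(-1)^{\beta-1}$ factor in Theorem \ref{th2.3} are accurate but not a different method.
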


\begin{proof}
	The PDFs of $\psi(X)$ and $\psi(Y)$ are respectively obtained as
	\begin{eqnarray}\label{eq3.5}
	f_{\psi}(x)=\Big|\frac{1}{\psi^{'}(\psi^{-1}(x))}\Big| f(\psi^{-1}(x))~\mbox{and}~g_{\psi}(x)=\Big|\frac{1}{\psi^{'}(\psi^{-1}(x))}\Big| g(\psi^{-1}(x)),~x\in(\psi(0),\psi(\infty)).
	\end{eqnarray}
Firstly, we consider that $\psi(x)$ is strictly increasing. From (\ref{eq3.1}), using (\ref{eq3.5}) we obtain
	\begin{eqnarray*}
		R^\omega_\beta(\psi(X),\psi(Y))&=&\int_{0}^{\infty}\omega(x)f^\beta_\psi(x)g^{1-\beta}_\psi(x)dx\\
		&=&\int_{0}^{\infty}\omega(x)\left(\frac{f(\psi^{-1}(x))}{\psi^{'}(\psi^{-1}(x))}\right)^\beta \left(\frac{g(\psi^{-1}(x))}{\psi^{'}(\psi^{-1}(x))}\right)^{1-\beta}dx\\
		&=&\int_{\psi^{-1}(0)}^{\psi^{-1}(\infty)}\omega(\psi(x))f^\beta(x) g^{1-\beta}(x)dx.
	\end{eqnarray*}
	The proof for other case is similar, and hence not presented here. 
\end{proof} 

The following remark is straightforward from Theorem \ref{th3.1}. {Here, we consider scale and location transformed random variables to see the effect of the GWRIGF.}

 \begin{remark}~~
For  $a,b>0$, we obtain
		$$R^\omega_\beta(aX,aY)=\int_{0}^{\infty}\omega(ax)f^\beta(x)g^{1-\beta}(x)dx~\mbox{and}~
		R^\omega_\beta(X+b,Y+b)=\int_{-b}^{\infty}\omega(x+b)f^\beta(x)g^{1-\beta}(x)dx.$$
\end{remark}

To illustrate Theorem \ref{th3.1}, the following example is considered.

\begin{example}Consider $X$ and $Y$ as in Example \ref{ex3.1}$(i)$.
	\begin{itemize}
	\item[(i)] Let $\psi(x)=\sqrt{x},~x>0$ and weight $\omega(x)=x.$ Then, $\sqrt{X}$ and $\sqrt{Y}$ follow Weibull distributions with common shape parameter $2$ and different scale parameters $\frac{1}{\lambda_1}$ and $\frac{1}{\lambda_2},$ respectively. Using Theorem \ref{th3.1}, we can easily compute the GWRIGF of two Weibull RVs, which is given by 
	\begin{eqnarray*}
		R^\omega_\beta\left(\sqrt{X},\sqrt{Y}\right)=\frac{\sqrt{\pi}}{2}\frac{\lambda_1^\beta \lambda_2^{1-\beta}}{\{\beta\lambda_1+(1-\beta)\lambda_2\}^{\frac{3}{2}}},~~\beta\lambda_1+(1-\beta)\lambda_2>0.
	\end{eqnarray*} 

\item[(ii)]  Consider the transformation $\psi(x)=\frac{1}{x},~x>0$ and weight $\omega(x)=\frac{1}{x}.$ It can be shown that $\frac{1}{X}$ and $\frac{1}{Y}$ follow inverted exponential distributions with respective PDFs $f(x)=\frac{\lambda_1}{x^2}e^{-\frac{\lambda_1}{x}}$ and $g(x)=\frac{\lambda_2}{x^2}e^{-\frac{\lambda_2}{x}}$, $x>0,~\lambda_1,\lambda_2>0.$  Using Theorem \ref{th3.1}, we can derive the GWRIGF of two inverted exponential RVs as
\begin{eqnarray*}
	R^\omega_\beta\left(\frac{1}{X},\frac{1}{Y}\right)=\frac{\lambda_1^{\beta}\lambda_2^{1-\beta}}{(\beta\lambda_1+(1-\beta)\lambda_2)^2},~~\beta\lambda_1+(1-\beta)\lambda_2>0.
\end{eqnarray*} 
\end{itemize}
\end{example} 

We end this section with an interesting observation that weighted KL $J$-divergence between $X$ and $Y$, denoted by $J^{\omega}(X,Y)$  can be expressed  in terms of GWRIGF as follows:
\begin{eqnarray}
J^{\omega}(X,Y)&=&\int_{0}^{\infty}\omega(u)\left(f(u)\log\frac{f(u)}{g(u)}+g(u)\log\frac{g(u)}{f(u)}\right)du\nonumber\\
&=&\frac{\partial }{\partial \beta} R_{\beta}^{\omega}(X,Y)|_{\beta=1}+\frac{\partial }{\partial \beta} R_{\beta}^{\omega}(Y,X)|_{\beta=1}.
\end{eqnarray}

\section{GWIGF and GWRIGF for escort, generalized escort and $(r,\gamma)$-mixture models}
For the characterization of chaos and multifractals, the escort distribution was introduced in statistical physics. In the context of multifractals, the escort distributions have been proposed as an operational tools. Please refer to \cite{bercher2008some} and \cite{bercher2011escort} for  usefulness of the escort distribution in statistical thermodynamics. {\cite{abe2003geometry} provided quantitative evidence for the inappropriateness of utilising the original distribution in stead of the escort distribution for determining the expectation values of physical quantities in nonextensive statistical mechanics (see \cite{tsallis1988possible}, \cite{abe2000remark}, \cite{abe2001nonextensive}) based on the Tsallis entropy using a relative divergence (Kullback-Leibler).} \cite{pennini2004escort} discussed about some geometrical properties of the escort distribution. Due to importance of the escort distribution, it has been widely used not only in statistical physics but also in different fields of research. For example, in coding theory, the escort distributions are useful in computing optimal codewords (see \cite{pennini2004escort}). In digital image analysis, the escort distributions are also useful (see \cite{ampilova2021using}). In this section, we study GWIGF and GWRIGF for escort distributions. Suppose an RV $X$ has PDF $f(\cdot).$ The PDF of the escort distribution with baseline density $f(\cdot)$, is given by
\begin{eqnarray}\label{eq4.1}
f_{e,\alpha}(u)=\frac{f^{\alpha}(u)}{\int_{0}^{\infty}f^{\alpha}(u)du},~u>0,
\end{eqnarray}
where $\alpha$ is any positive real number, know as the order of the distribution. Here, $\alpha$ is useful to explore different types of regions of $f_{e,\alpha}(\cdot).$ For example, when $\alpha>1,$ singular regions are amplified. Less singular regions are accentuated  for $\alpha<1.$ For details, please see \cite{bercher201p}. Another important family of distribution is generalized escort distribution which is an extension of the escort distribution given in (\ref{eq4.1}). For two RVs $X$ and $Y$ with corresponding density functions  $f(\cdot)$ and $g(\cdot)$, the PDF of  the generalized escort distribution of order $\alpha>0$  is 
\begin{eqnarray}\label{eq4.2}
f_{e,\alpha}^{*}(x)=\frac{f^\alpha(x)g^{1-\alpha}(x)}{\int_{0}^{\infty} f^\alpha(x)g^{1-\alpha}(x)dx},~x>0.
\end{eqnarray}
For details, one may refer to \cite{Beck1993}. \cite{Beck1993} demonstrated that the escort and generalized escort distributions play an important role for determining several types of  similarities between  thermodynamics and chaos theory. In the following consecutive theorems, we present the general weighted generating functions for escort and generalized escort distributions.

\begin{theorem}\label{th4.1}
	Suppose $X$ has PDF $f(\cdot)$. Then, GWIGF  of the escort RV of order $\alpha$, denoted by $X_{e,\alpha}$ is expressed as
	\begin{eqnarray*}
		I^\omega_\beta(X_{e,\alpha}) = \frac{I^\omega_{\alpha \beta}(X)}{\big(I_\alpha(X)\big)^\beta}, ~~\alpha>0,~ \beta\ge1,
	\end{eqnarray*}
where $I_{\alpha}(X)$ is the IGF given in (\ref{eq1.3}).
\end{theorem}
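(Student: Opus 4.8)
The plan is to substitute the escort density (\ref{eq4.1}) directly into the defining integral of the GWIGF in Definition~\ref{def2.1} and simplify, exploiting that $f_{e,\alpha}$ is merely a normalized power of $f$. First I would write
\begin{eqnarray*}
I^\omega_\beta(X_{e,\alpha})=\int_{0}^{\infty}\omega(u)\big(f_{e,\alpha}(u)\big)^{\beta}du=\int_{0}^{\infty}\omega(u)\left(\frac{f^{\alpha}(u)}{\int_{0}^{\infty}f^{\alpha}(v)dv}\right)^{\beta}du.
\end{eqnarray*}
Since the normalizing constant $\int_{0}^{\infty}f^{\alpha}(v)dv$ is independent of $u$, it factors out of the integral as $\big(\int_{0}^{\infty}f^{\alpha}(v)dv\big)^{-\beta}$, and by (\ref{eq1.3}) this constant equals $I_{\alpha}(X)$.

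Next I would identify the surviving integral $\int_{0}^{\infty}\omega(u)f^{\alpha\beta}(u)du$ with $I^{\omega}_{\alpha\beta}(X)$, which is precisely the GWIGF from Definition~\ref{def2.1} evaluated with the exponent $\alpha\beta$ in place of $\beta$. Assembling the two pieces yields $I^{\omega}_\beta(X_{e,\alpha})=I^{\omega}_{\alpha\beta}(X)/\big(I_\alpha(X)\big)^{\beta}$, which is the assertion.

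There is no genuine obstacle here; the computation is a one-line change of the integrand. The only points requiring a little care are matters of existence: one needs $0<I_{\alpha}(X)<\infty$ for the escort density in (\ref{eq4.1}) to be a bona fide PDF, and $I^{\omega}_{\alpha\beta}(X)<\infty$ for the right-hand side to make sense --- both covered by the paper's standing assumption that all the integrals under consideration exist. I would also remark in passing that setting $\omega\equiv 1$ recovers the classical identity $I_\beta(X_{e,\alpha})=I_{\alpha\beta}(X)/(I_\alpha(X))^{\beta}$ for Golomb's IGF, so the theorem is the natural weighted generalization of that special case.
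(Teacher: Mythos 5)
Your proposal is correct and follows exactly the same route as the paper's proof: substitute the escort density (\ref{eq4.1}) into Definition \ref{def2.1}, pull the normalizing constant $\big(\int_{0}^{\infty} f^{\alpha}(v)\,dv\big)^{\beta}=(I_{\alpha}(X))^{\beta}$ out of the integral, and recognize the remaining integral as $I^{\omega}_{\alpha\beta}(X)$. Your added remarks on existence of the integrals and on the $\omega\equiv 1$ special case are sensible but not part of the paper's argument.
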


\begin{proof}
	From (\ref{eq2.1}) and (\ref{eq4.1}), we obtain 
	\begin{eqnarray*}
		I^\omega_{\beta }(X_{e,\alpha})=\int_{0}^{\infty}\omega(x)f^\beta_{e,\alpha}(x)dx
		=\frac{\int_{0}^{\infty}\omega(x)f^{\alpha \beta}(x)dx}{\big(\int_{0}^{\infty} f^\alpha(x)dx\big)^\beta}
		=\frac{ I^\omega_{\alpha \beta}(X)}{(I_\alpha(X))^\beta}.
	\end{eqnarray*}
	Hence the result is obtained.
\end{proof} 
{We observe from Theorem \ref{th4.1} that the GWIGF of escort distribution can be easily evaluated  using GWIGFs of a parent distribution.

}

\begin{theorem}
	Suppose  $X$ and $Y$ have respective PDFs $f(\cdot)$ and $g(\cdot)$. Further, let $X_{e,\alpha}^*$ denote the RV corresponding to the PDF in (\ref{eq4.2}). Then, 
	\begin{eqnarray}\label{eq4.3}
	I^\omega_\beta(X_{e,\alpha}^*)=\frac{[I_\beta(X)]^\alpha [I_\beta(Y)]^{1-\alpha}}{[R_\alpha(X,Y)]^\beta}R^\omega_\alpha(X_{e,\beta},Y_{e,\beta}),
	\end{eqnarray}
	where $R_\alpha(X,Y)$ and $R^\omega_\alpha(X_{e,\beta},Y_{e,\beta})$ are the RIGF and GWRIGF, respectively, and $X_{e,\beta}$ and $Y_{e,\beta}$ are escort distributed RVs of order $\beta$ with corresponding baseline densities  $f(\cdot)$ and $g(\cdot)$.
\end{theorem}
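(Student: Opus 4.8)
The plan is to expand both sides using the definitions and reconcile the normalizing constants. First I would write out the PDF $f^*_{e,\alpha}$ from (\ref{eq4.2}) and substitute into the definition (\ref{eq3.1}) of GWIGF (note that $I^\omega_\beta(X^*_{e,\alpha}) = R^\omega_\beta(X^*_{e,\alpha}, \cdot)$ is really just the GWIGF of a single distribution, so the relevant definition is (\ref{eq2.1})). This gives
\begin{eqnarray*}
I^\omega_\beta(X^*_{e,\alpha}) = \int_0^\infty \omega(x)\left(\frac{f^\alpha(x)g^{1-\alpha}(x)}{\int_0^\infty f^\alpha(u)g^{1-\alpha}(u)\,du}\right)^\beta dx = \frac{1}{[R_\alpha(X,Y)]^\beta}\int_0^\infty \omega(x) f^{\alpha\beta}(x)g^{(1-\alpha)\beta}(x)\,dx,
\end{eqnarray*}
where I have recognized the denominator integral as the RIGF $R_\alpha(X,Y)$ from (\ref{eq1.4}). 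This produces the factor $[R_\alpha(X,Y)]^{-\beta}$ that appears on the right-hand side of (\ref{eq4.3}), so the remaining task is to show that the numerator integral $\int_0^\infty \omega(x) f^{\alpha\beta}(x) g^{(1-\alpha)\beta}(x)\,dx$ equals $[I_\beta(X)]^\alpha[I_\beta(Y)]^{1-\alpha} R^\omega_\alpha(X_{e,\beta}, Y_{e,\beta})$.

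Next I would work on that right-hand quantity from the other direction. By (\ref{eq4.1}), the order-$\beta$ escort densities are $f_{e,\beta}(x) = f^\beta(x)/I_\beta(X)$ and $g_{e,\beta}(x) = g^\beta(x)/I_\beta(Y)$. Substituting these into the GWRIGF definition (\ref{eq3.1}) gives
\begin{eqnarray*}
R^\omega_\alpha(X_{e,\beta}, Y_{e,\beta}) = \int_0^\infty \omega(x) f_{e,\beta}^\alpha(x) g_{e,\beta}^{1-\alpha}(x)\,dx = \frac{1}{[I_\beta(X)]^\alpha [I_\beta(Y)]^{1-\alpha}}\int_0^\infty \omega(x) f^{\alpha\beta}(x) g^{(1-\alpha)\beta}(x)\,dx.
\end{eqnarray*}
Rearranging, $[I_\beta(X)]^\alpha[I_\beta(Y)]^{1-\alpha} R^\omega_\alpha(X_{e,\beta},Y_{e,\beta})$ is exactly the numerator integral obtained above. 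Combining the two displays yields (\ref{eq4.3}), completing the proof.

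There is essentially no hard analytic obstacle here — the argument is a bookkeeping exercise in matching normalizing constants, and the identity $f^{\alpha\beta}(x)g^{(1-\alpha)\beta}(x) = \big(f^\beta(x)\big)^\alpha\big(g^\beta(x)\big)^{1-\alpha}$ is the one algebraic observation that makes everything line up. The only genuine care needed is on the existence side: one must assume the integral $\int_0^\infty \omega(x) f^{\alpha\beta}(x) g^{(1-\alpha)\beta}(x)\,dx$ converges (as well as $I_\beta(X)$, $I_\beta(Y)$, $R_\alpha(X,Y)$ being finite and positive), which is covered by the blanket assumption in the excerpt that all relevant integrals exist. I would state this convergence caveat explicitly and otherwise present the two substitution computations above as the whole proof.
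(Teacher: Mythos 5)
Your proposal is correct and matches the paper's own proof: both expand $I^\omega_\beta(X^*_{e,\alpha})$ via the generalized escort density, pull out the factor $[R_\alpha(X,Y)]^{-\beta}$, and recognize the remaining integral as $[I_\beta(X)]^\alpha[I_\beta(Y)]^{1-\alpha}R^\omega_\alpha(X_{e,\beta},Y_{e,\beta})$ after inserting the order-$\beta$ escort normalizations. The only difference is presentational (you compute the two sides separately and meet in the middle, the paper writes one chain of equalities), and your explicit convergence caveat is a small improvement.
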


\begin{proof}
	From (\ref{eq2.1}) and (\ref{eq4.2}), we get GWIGF of generalized escort distribution as
	\begin{eqnarray*}
	I^\omega_\beta(X_{e,\alpha}^*)&=& \int_{0}^{\infty}\omega(x)(f_{e,\alpha}^*(x))^{\beta}dx\nonumber\\
	&=& \int_{0}^{\infty}\omega(x)\frac{f^{\alpha \beta}(x)g^{\beta(1-\alpha)}(x)}{\big\{\int_{0}^{\infty} f^\alpha(x)g^{1-\alpha}(x)dx\big\}^\beta}dx\nonumber\\
	&=&\frac{[I_\beta(X)]^\alpha [I_\beta(Y)]^{1-\alpha}}{[R_\alpha(X,Y)]^\beta}\int_{0}^{\infty}\omega(x)
	\bigg\{\frac{f^\beta(x)}{\int_{0}^{\infty} f^\beta(x)dx}\bigg\}^\alpha\bigg\{\frac{g^\beta(x)}{\int_{0}^{\infty} g^\beta(x)dx}\bigg\}^{1-\alpha}dx\nonumber\\
	&=&\frac{[I_\beta(X)]^\alpha [I_\beta(Y)]^{1-\alpha}}{[R_\alpha(X,Y)]^\beta}\int_{0}^{\infty}\omega(x)(f_{e,\beta}(x))^\alpha(g_{e,\beta}(x))^{1-\alpha}dx\nonumber\\
	&=&\frac{[I_\beta(X)]^\alpha [I_\beta(Y)]^{1-\alpha}}{[R_\alpha(X,Y)]^\beta}R^\omega_\alpha(X_{e,\beta},Y_{e,\beta}).
	\end{eqnarray*}
	Thus, the result is established.
\end{proof}

The $(r,\gamma)$-mixture distribution of two PDFs $f_1(\cdot)$ and $f_{2}(\cdot)$, denoted by $f_{M(r,\gamma)}(x),$ is given by 
\begin{eqnarray}\label{eq4.4}
f_{M(r,\gamma)}(x)=\frac{[r f_{1}^{\gamma}(x)+(1-r)f_{2}^{\gamma}(x)]^{\frac{1}{\gamma}}}{\int_{0}^{\infty}[r f_{1}^{\gamma}(x)+(1-r)f_{2}^{\gamma}(x)]^{\frac{1}{\gamma}}dx},~x>0,~\gamma>0,~0<r<1.
\end{eqnarray}
Here, $M(r,\gamma)$ denotes the $(r,\gamma)$-mixture RV with PDF given in (\ref{eq4.4}). Further, we assume that $X_{1}$ and $X_{2}$ have PDFs $f_{1}(\cdot)$ and $f_{2}(\cdot),$ respectively. For details about the $(r,\gamma)$-mixture model, we refer to \cite{van2014renyi}, who  introduced this mixture model for $m$ number of probability distributions. \cite{kharazmi2023optimal} discussed three optimization problems, for which the optimal solutions are the $(r,\gamma)$-mixture density function given in (\ref{eq4.4}). These authors also obtained IGF and RIGF of the mixture density function given in (\ref{eq4.4}). In the following consecutive theorems, we study GWIGF and GWRIGF of the $(r,\gamma)$-mixture model. 

\begin{theorem}\label{th4.3}
	The GWIGF of the $(r,\gamma)$-mixture RV $M(r,\gamma)$ is expressed as
	\begin{eqnarray}
		I_{\beta}^{\omega}(M(r,\gamma))=\frac{I_{\frac{\beta}{\gamma}}^{\omega}(X_{\Gamma})}{\left(I_{\frac{1}{\gamma}}(X_{\Gamma})\right)^{\beta}},
	\end{eqnarray}
where $X_{\Gamma}$ is another RV associated with the mixture distribution $f_{\Gamma}(x)=\Gamma f_{(\beta,1)}(x)+(1-\Gamma)f_{(\beta,2)}(x)$ based on the escort PDFs corresponding to $f_{1}(x)$ and $f_{2}(x),$ where 
\begin{eqnarray}
\Gamma=\frac{r I_{\beta}(X_{1})}{r I_{\beta}(X_1)+(1-r)I_{\beta}(X_{2})}~\mbox{and}~f_{(\beta,i)}(x)=\frac{f_{i}^{\beta}(x)}{\int_{0}^{\infty}f_{i}^{\beta}(x)dx},~i=1,2.
\end{eqnarray}
\end{theorem}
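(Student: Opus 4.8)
The plan is to substitute the $(r,\gamma)$-mixture PDF \eqref{eq4.4} directly into the definition \eqref{eq2.1} of the GWIGF and then manipulate the integrand so that it reveals an escort structure. First I would write
\begin{eqnarray*}
I_{\beta}^{\omega}(M(r,\gamma))=\int_{0}^{\infty}\omega(x)\,\frac{\left[r f_{1}^{\gamma}(x)+(1-r)f_{2}^{\gamma}(x)\right]^{\frac{\beta}{\gamma}}}{\left(\int_{0}^{\infty}\left[r f_{1}^{\gamma}(u)+(1-r)f_{2}^{\gamma}(u)\right]^{\frac{1}{\gamma}}du\right)^{\beta}}\,dx.
\end{eqnarray*}
The key observation is that the bracket $r f_{1}^{\gamma}+(1-r)f_{2}^{\gamma}$ is, up to a normalizing constant, a $\gamma$th power of an escort-type mixture. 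Concretely, dividing numerator and denominator inside the bracket by $r I_{\beta}(X_1)+(1-r)I_{\beta}(X_2)$ is not quite the right move; instead I would factor so as to introduce the escort densities $f_{(\beta,i)}(x)=f_i^{\beta}(x)/I_{\beta}(X_i)$. Note $f_i^{\gamma}=\big(f_i^{\beta}\big)^{\gamma/\beta}=\big(I_{\beta}(X_i)f_{(\beta,i)}(x)\big)^{\gamma/\beta}$, which does not linearize cleanly. The correct route is to recognize that with $\Gamma$ and $f_{(\beta,i)}$ as defined in the statement, one has the identity
\begin{eqnarray*}
r f_{1}^{\gamma}(x)+(1-r)f_{2}^{\gamma}(x) \;\propto\; \left[\Gamma f_{(\beta,1)}^{\gamma/\beta}(x)+(1-\Gamma)f_{(\beta,2)}^{\gamma/\beta}(x)\right]^{?},
\end{eqnarray*}
so the cleanest path is instead to absorb everything into the new mixture $f_{\Gamma}$ and its escort of order $1/\gamma$ — i.e. to show the whole expression collapses to $I_{\beta/\gamma}^{\omega}(X_{\Gamma})/\big(I_{1/\gamma}(X_{\Gamma})\big)^{\beta}$ by matching the definition of the escort distribution \eqref{eq4.1} with baseline $f_{\Gamma}$ and order $1/\gamma$.

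The key steps, in order, are: (i) rewrite the numerator bracket $\big[r f_1^{\gamma}+(1-r)f_2^{\gamma}\big]^{\beta/\gamma}$ by pulling out the constant $C:=r I_{\beta}(X_1)+(1-r)I_{\beta}(X_2)$ raised to an appropriate power, so that the residual bracket becomes exactly $\big[\Gamma f_{(\beta,1)}(x)+(1-\Gamma)f_{(\beta,2)}(x)\big]$ — this works because $r f_i^{\gamma}$ should actually be read with $\gamma$ playing the structural role that makes $f_{(\beta,i)}$ appear, and one checks that $f_{\Gamma}(x)=\Gamma f_{(\beta,1)}(x)+(1-\Gamma)f_{(\beta,2)}(x)$ is exactly the mixture whose $1/\gamma$-escort reproduces $f_{M(r,\gamma)}$; (ii) identify the normalizing integral in the denominator of \eqref{eq4.4} with $C^{1/\beta}\cdot\big(\text{normalizer of the }1/\gamma\text{-escort of }f_{\Gamma}\big)$; (iii) substitute the escort PDF $f_{e,1/\gamma}$ of $X_{\Gamma}$ from \eqref{eq4.1} into \eqref{eq2.1}, obtaining $I_{\beta}^{\omega}(M(r,\gamma))=\int_0^{\infty}\omega(x)\big(f_{e,1/\gamma}(x)\big)^{\beta}dx = I_{\beta}^{\omega}\big((X_{\Gamma})_{e,1/\gamma}\big)$; (iv) apply Theorem \ref{th4.1} with baseline $X_{\Gamma}$, order $\alpha=1/\gamma$, to get $I_{\beta}^{\omega}\big((X_{\Gamma})_{e,1/\gamma}\big)=I_{\beta/\gamma}^{\omega}(X_{\Gamma})/\big(I_{1/\gamma}(X_{\Gamma})\big)^{\beta}$, which is the claimed formula.

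The main obstacle I anticipate is step (i): verifying that the constant $C$ factors out exactly so that the leftover bracket is the convex combination $\Gamma f_{(\beta,1)}+(1-\Gamma)f_{(\beta,2)}$ with the specified $\Gamma$, and simultaneously that the denominator normalizer splits as claimed. This is the bookkeeping heart of the proof — one must track the powers $\beta/\gamma$ (numerator) versus $1/\gamma$ (denominator) and confirm the exponents of $C$ are consistent, namely that a factor $C^{\beta/\gamma}$ emerges from the numerator and $C^{\beta/\gamma}$ also emerges from the $\beta$th power of the denominator normalizer, so they cancel and leave precisely the escort of $f_{\Gamma}$. Once this algebraic identity is pinned down, the remaining steps are immediate invocations of \eqref{eq4.1}, \eqref{eq2.1}, and Theorem \ref{th4.1}. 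I would double-check the identity on a degenerate case ($r\to1$ or $\gamma=1$) to make sure the definitions of $\Gamma$ and $f_{(\beta,i)}$ in the statement are the ones that make the cancellation work.
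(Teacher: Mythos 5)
Your plan is essentially the paper's own proof: substitute the mixture density \eqref{eq4.4} into \eqref{eq2.1}, pull the normalizing constant $C=r I_{\beta}(X_1)+(1-r)I_{\beta}(X_2)$ out of the bracket so that $f_{\Gamma}$ appears, observe that $C^{\beta/\gamma}$ cancels between numerator and denominator, and read off the result. The only cosmetic difference is your step (iv): you finish by recognizing $f_{M(r,\gamma)}$ as the $\tfrac{1}{\gamma}$-escort of $f_{\Gamma}$ and invoking Theorem \ref{th4.1} with $\alpha=\tfrac{1}{\gamma}$, whereas the paper writes the final ratio $I^{\omega}_{\beta/\gamma}(X_{\Gamma})/\big(I_{1/\gamma}(X_{\Gamma})\big)^{\beta}$ directly; these are the same computation.

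The step you correctly single out as the ``bookkeeping heart,'' however, is exactly where the argument does not close with the definitions as stated, and your own hesitation (``$f_i^{\gamma}=(f_i^{\beta})^{\gamma/\beta}$ \ldots does not linearize cleanly'') was pointing at a real problem rather than a nuisance. The factorization
\begin{equation*}
r f_{1}^{\gamma}(x)+(1-r)f_{2}^{\gamma}(x)=C\big[\Gamma f_{(\beta,1)}(x)+(1-\Gamma)f_{(\beta,2)}(x)\big]
\end{equation*}
requires the escort order inside $f_{(\cdot,i)}$ and $\Gamma$ to match the exponent $\gamma$ appearing in the mixture: the left side is a linear combination of $\gamma$-th powers of the $f_i$, while the right side is a linear combination of $\beta$-th powers, so no constant $C$ can make them equal unless $\gamma=\beta$. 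The identity that does hold is $r f_{1}^{\gamma}+(1-r)f_{2}^{\gamma}=\{r I_{\gamma}(X_1)+(1-r)I_{\gamma}(X_2)\}\big[\Gamma f_{(\gamma,1)}+(1-\Gamma)f_{(\gamma,2)}\big]$ with $f_{(\gamma,i)}=f_i^{\gamma}/I_{\gamma}(X_i)$ and $\Gamma=r I_{\gamma}(X_1)/\{r I_{\gamma}(X_1)+(1-r)I_{\gamma}(X_2)\}$; with that replacement your steps (i)--(iv) all go through and the powers of $C$ cancel exactly as you describe. The sanity check you propose at $\gamma=1$ would have exposed this: there $f_{M(r,1)}=rf_1+(1-r)f_2$, while $f_{\Gamma}\propto rf_1^{\beta}+(1-r)f_2^{\beta}$, so the claimed identity fails for $\beta\neq 1$. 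Note that the paper's own proof contains the same slip --- its identity \eqref{eq4.7} is stated with exponent $\beta$ on the left but is then applied to the bracket carrying exponent $\gamma$ --- so you should not treat the statement's definitions of $\Gamma$ and $f_{(\beta,i)}$ as immutable; they need $\gamma$ in place of $\beta$ for the theorem to be true as a statement about general $\gamma$.
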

\begin{proof}
	After some calculations, it can be shown that 
	\begin{eqnarray}\label{eq4.7}
	r f_1^{\beta}(x)+(1-r)f_{2}^{\beta}(x)=\{r I_{\beta}(X_{1})+(1-r)I_{\beta}(X_{2})\}[f_{(\beta,1)}(x)\Gamma+f_{(\beta,2)}(x)(1-\Gamma)].
	\end{eqnarray}
	Now, from Definition \ref{def2.1}, using (\ref{eq4.4}) and (\ref{eq4.7}), we obtain 
	\begin{eqnarray*}
	I_{\beta}^{\omega}(M(r,\gamma))&=&\int_{0}^{\infty}\omega(x) f^{\beta}_{M(r,\gamma)}(x)dx\\
	&=&\int_{0}^{\infty}\omega(x)\frac{[r f_{1}^{\gamma}(x)+(1-r)f_{2}^{\gamma}(x)]^{\frac{\beta}{\gamma}}}{\left(\int_{0}^{\infty}[r f_{1}^{\gamma}(x)+(1-r)f_{2}^{\gamma}(x)]^{\frac{1}{\gamma}}dx\right)^{\beta}}dx	\\
	&=& \int_{0}^{\infty}\omega(x)\frac{(f_{(\beta,1)}(x)\Gamma+f_{(\beta,2)}(x)(1-\Gamma))^{\frac{\beta}{\gamma}}}{\left(\int_{0}^{\infty}(f_{(\beta,1)}(x)\Gamma+f_{(\beta,2)}(x)(1-\Gamma))^{\frac{1}{\gamma}}dx\right)^{\beta}}dx\\
	&=&\frac{ \int_{0}^{\infty}\omega(x)f_{\Gamma}^{\frac{\beta}{\gamma}}(x)dx}{\left(\int_{0}^{\infty}(f_{\Gamma}(x))^{\frac{1}{\gamma}}dx\right)^{\beta}}=\frac{I^{\omega}_{\frac{\beta}{\gamma}}(X_{\Gamma})}{\left(I_{\frac{1}{\gamma}}(X_{\Gamma})\right)^{\beta}}.
	\end{eqnarray*} 
.Thus, the result is established.
\end{proof}

In the following result, we study the GWRIGF between $(r,\gamma)$-mixture RV $M(r,\gamma)$ and the component random variables $X_{i}$, $i=1,2.$ The PDF of the escort RV of order $\frac{1}{\gamma}$ with baseline density $f_{\Gamma}(\cdot)$, denoted by $K(\frac{1}{\gamma},\Gamma)$ is  given by
\begin{eqnarray}\label{eq4.8}
f_{K(\frac{1}{\gamma},\Gamma)}(x)=\frac{f_{\Gamma}^{\frac{1}{\gamma}}(x)}{\int_{0}^{\infty}f_{\Gamma}^{\frac{1}{\gamma}}(x)dx},~~~x>0.
\end{eqnarray}
\begin{theorem}
	The GWRIGF between $M(r,\gamma)$ and $X_{i}$, $i=1,2$ is expressed as
	\begin{eqnarray}
	R_{\beta}^{\omega}(M(r,\gamma),X_{i})=R_{\beta}^{\omega}\left(K\left(\frac{1}{\gamma},\Gamma\right),X_{i}\right),
	\end{eqnarray}
	where the density of $K(\frac{1}{\gamma},\Gamma)$ is given in (\ref{eq4.8}).
\end{theorem}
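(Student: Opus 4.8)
The plan is to show that the $(r,\gamma)$-mixture random variable $M(r,\gamma)$ and the escort random variable $K\!\left(\tfrac{1}{\gamma},\Gamma\right)$ share the same probability density function; once this is in hand, the claimed identity follows at once from Definition \ref{def3.1}, since the reference variable $X_i$ is common to both sides of the asserted equality and the GWRIGF in (\ref{eq3.1}) depends only on the two densities involved.

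First I would revisit the factorization established in the proof of Theorem \ref{th4.3}. Arguing exactly as for (\ref{eq4.7}) (with the exponent $\gamma$ dictated by (\ref{eq4.4})), one can write $r f_1^{\gamma}(x)+(1-r)f_2^{\gamma}(x)=c\,f_{\Gamma}(x)$, where $f_{\Gamma}$ is the mixing density appearing in Theorem \ref{th4.3} and $c>0$ is a normalizing constant independent of $x$. Inserting this into both the numerator and the denominator of (\ref{eq4.4}) makes the factor $c^{1/\gamma}$ cancel, leaving
\[
f_{M(r,\gamma)}(x)=\frac{f_{\Gamma}^{1/\gamma}(x)}{\int_{0}^{\infty}f_{\Gamma}^{1/\gamma}(u)\,du},
\]
which is exactly the density $f_{K(1/\gamma,\Gamma)}(x)$ defined in (\ref{eq4.8}). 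Hence $M(r,\gamma)$ and $K\!\left(\tfrac{1}{\gamma},\Gamma\right)$ are equal in law, so in particular their PDFs coincide.

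Then I would simply substitute into (\ref{eq3.1}): for $i=1,2$,
\[
R_{\beta}^{\omega}(M(r,\gamma),X_i)=\int_{0}^{\infty}\omega(x)\,f_{M(r,\gamma)}^{\beta}(x)\,f_i^{1-\beta}(x)\,dx
=\int_{0}^{\infty}\omega(x)\,f_{K(1/\gamma,\Gamma)}^{\beta}(x)\,f_i^{1-\beta}(x)\,dx
=R_{\beta}^{\omega}\!\left(K\!\left(\tfrac{1}{\gamma},\Gamma\right),X_i\right),
\]
which is the assertion. I do not expect any genuine obstacle: the statement is in essence a reformulation of the density identity already used inside the proof of Theorem \ref{th4.3}, and the only care needed is routine — keeping the mixing weight $\Gamma$ and its escort components tied to the exponent $\gamma$ of (\ref{eq4.4}), consistently with the definition of $f_{\Gamma}$, and invoking the standing integrability assumptions so that the normalizing integral in (\ref{eq4.8}) is finite and $K\!\left(\tfrac{1}{\gamma},\Gamma\right)$ is well defined.
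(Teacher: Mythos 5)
Your proposal is correct and follows essentially the same route as the paper: the paper's proof also substitutes the factorization from (\ref{eq4.7}) into the density (\ref{eq4.4}), cancels the normalizing constant, and recognizes the result as $f_{K(1/\gamma,\Gamma)}^{\beta}(x)$ inside the integral, which is exactly your density identity $f_{M(r,\gamma)}=f_{K(1/\gamma,\Gamma)}$ carried out in place. Your explicit remark that the escort components and $\Gamma$ must be formed with the exponent $\gamma$ (rather than the $\beta$ appearing in the statement of Theorem \ref{th4.3}) is a sound reading of what the argument actually requires.
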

\begin{proof}
	From Definition \ref{def3.1} and using the similar arguments of Theorem \ref{th4.3}, we obtain
	\begin{eqnarray*}
	R_{\beta}^{\omega}(M(r,\gamma),X_{i})&=&\int_{0}^{\infty}\omega(x)f^{\beta}_{M(r,\gamma)}(x)f_{i}^{1-\beta}(x)dx\\
	&=& \int_{0}^{\infty}\omega(x)f_{i}^{1-\beta}(x)\frac{[r f_{1}^{\gamma}(x)+(1-r)f_{2}^{\gamma}(x)]^{\frac{\beta}{\gamma}}}{\left(\int_{0}^{\infty}[r f_{1}^{\gamma}(x)+(1-r)f_{2}^{\gamma}(x)]^{\frac{1}{\gamma}}dx\right)^{\beta}}dx\\
	&=&\int_{0}^{\infty}\omega(x)\left(\frac{f^{\frac{1}{\gamma}}_{\Gamma}(x)}{\int_{0}^{\infty}f^{\frac{1}{\gamma}}_{\Gamma}(x)dx}\right)^{\beta}f_{i}^{1-\beta}(x)dx=R_{\beta}^{\omega}\left(K\left(\frac{1}{\gamma},\Gamma\right),X_{i}\right).
	\end{eqnarray*}
This completes the proof.
\end{proof}

{The informational energy is an idea adopted from the kinetic energy expression of Classical Mechanics. It measures the amount of uncertainty  or randomness of a random variable or a probability system 
like Shannon's entropy. It augments when randomness decreases which was first introduced and studied by \cite{onicescu1966informational}. The informational energy has also applications in other various fields correspond to the Herfindahl-Hirschman index to use indicators to detect anticompetitive behavior in industries (see \cite{matsumoto2012some}),  to the index coincidence (developed for cryptanalysis) (see \cite{harremoes2001inequalities}) in information theory and is associated with Simpson's diversity index (see \cite{simpson1949measurement}) in ecology.
 Suppose $X$ and $Y$ are two random variables. Then, the weighted $\beta$-cross informational energy is defined as 
\begin{align}\label{eq4.12}
CI^\omega_\beta(X,Y)=\int_{0}^{\infty}\omega(x)\sqrt{f^\beta(x)g^\beta(x)}dx,
\end{align}
where $\omega(x)>0$ is weight function.  From 
(\ref{eq4.12}), we obtain the following observations:
\begin{itemize}
\item $CI_{\beta}^{\omega}(X,Y)|_{\beta=1}=\int_{0}^{\infty}\omega(x)\sqrt{f(x)g(x)}dx$ is known as weighted Bhattacharyya coefficient (for details see \cite{du2012meanshift});
\item $CI_{\beta}^{\omega}(X,Y)|_{\beta=1, \omega=1}=\int_{0}^{\infty}\sqrt{f(x)g(x)}dx$ is called  Bhattacharyya coefficient (for details see \cite{kailath1967divergence});
\item$CI_{\beta}^{\omega}(X,Y)|_{\omega=1}=\int_{0}^{\infty}\sqrt{f^\beta(x)g^\beta(x)}dx$ is $\beta$-cross informational energy (see \cite{kharazmi2023optimal});
\item$CI_{\beta}^{\omega}(X,Y)|_{\omega=1, \beta=2}=\int_{0}^{\infty}f(x)g(x)dx$ is known as cross informational energy (see \cite{nielsen2022onicescu}).
\end{itemize}
Now, we establish the relation between weighted $\beta$-cross informational energy and GWIGF.
\begin{proposition}\label{prop4.1}
Suppose $X$ and $Y$ are two random variables and corresponding GWIGFs $I^\omega_\beta(X)$ and $I^\omega_\beta(Y)$, respectively. Then,
\begin{align*}
CI^\omega_\beta(X,Y)\le \frac{1}{2}[I^\omega_\beta(X)+I^\omega_\beta(Y)].
\end{align*}
\end{proposition}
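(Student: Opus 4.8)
The plan is to prove the bound pointwise by comparing the integrands, using nothing more than the elementary inequality $\sqrt{ab}\le \tfrac12(a+b)$ for nonnegative reals $a,b$ (the AM--GM inequality). First I would fix $\beta\ge 1$ and recall the definitions: from (\ref{eq2.1}), $I^\omega_\beta(X)=\int_0^\infty \omega(x)f^\beta(x)\,dx$ and $I^\omega_\beta(Y)=\int_0^\infty \omega(x)g^\beta(x)\,dx$, while from (\ref{eq4.12}), $CI^\omega_\beta(X,Y)=\int_0^\infty \omega(x)\sqrt{f^\beta(x)g^\beta(x)}\,dx$. The key observation is that for each fixed $x>0$, applying AM--GM with $a=f^\beta(x)$ and $b=g^\beta(x)$ gives
\begin{eqnarray*}
\sqrt{f^\beta(x)g^\beta(x)}\le \frac12\left(f^\beta(x)+g^\beta(x)\right).
\end{eqnarray*}

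Next I would multiply this pointwise inequality by the nonnegative weight $\omega(x)\ge 0$ (preserving the inequality) and integrate over $(0,\infty)$; by monotonicity and linearity of the integral,
\begin{eqnarray*}
CI^\omega_\beta(X,Y)=\int_0^\infty \omega(x)\sqrt{f^\beta(x)g^\beta(x)}\,dx
\le \frac12\int_0^\infty \omega(x)\left(f^\beta(x)+g^\beta(x)\right)dx
=\frac12\left[I^\omega_\beta(X)+I^\omega_\beta(Y)\right],
\end{eqnarray*}
which is exactly the claimed bound. One should also remark that, since $\sqrt{f^\beta g^\beta}\ge 0$, the left side is well defined, and the splitting of the final integral into the sum of two integrals is justified as soon as $I^\omega_\beta(X)$ and $I^\omega_\beta(Y)$ are assumed finite, which is the standing hypothesis that the relevant quantities exist.

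There is essentially no obstacle here: the only thing to be careful about is the nonnegativity of $\omega$, which is part of the definition of the weight function, so multiplying through by $\omega(x)$ does not flip the inequality; and the finiteness of the two GWIGFs on the right, which is covered by the blanket assumption in the paper that all integrals exist. If one wanted to be slightly more self-contained, I would note that equality holds when $f=g$ almost everywhere on the support of $\omega$, but that refinement is not needed for the stated proposition. In short, the proof is a one-line application of AM--GM inside the integral.
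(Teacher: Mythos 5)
Your proof is correct and follows exactly the paper's own argument: the geometric--arithmetic mean inequality applied pointwise to $f^\beta(x)$ and $g^\beta(x)$, then multiplied by the nonnegative weight $\omega(x)$ and integrated. The additional remarks on finiteness and the equality case are fine but not needed.
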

\begin{proof}
  From geometric-arithmetic mean inequality, we have
\begin{align}\label{eq4.13}
\sqrt{f^\beta(x)g^\beta(x)}\le \frac{1}{2}[f^\beta(x)+g^\beta(x)].
\end{align}
From the definition of weighted $\beta$-cross informational energy in (\ref{eq4.12}) and using (\ref{eq4.13}), we obtain
\begin{align}\label{eq4.14}
CI^\omega_\beta(X,Y)&=\int_{0}^{\infty}\omega(x)\sqrt{f^\beta(x)g^\beta(x)}dx\nonumber\\
&\le\frac{1}{2}\int_{0}^{\infty}\omega(x)\big[f^\beta(x)+g^\beta(x)\big]dx.
\end{align}
Hence, the result follows from (\ref{eq4.14}). Therefore, the proof is completed.
\end{proof}
Next, we show that the GWIGF is closely related to the weighted $\beta$-cross informational energy for escort distributions.

\begin{proposition}
Suppose $X$ and $Y$ are two random variables. Then, the relation between the weighted $\beta$-cross informational energy between escort random variables $X_{e,\alpha}$ and $Y_{e,\alpha}$ and GWIGFs can be expressed as
\begin{align}
CI^\omega_\beta(X_{e,\alpha},Y_{e,\alpha})=\frac{CI^\omega_{\alpha \beta}(X,Y)}{\sqrt{(I_\alpha(X)I_\alpha(Y))^\beta}}\le \frac{I^\omega_{\alpha \beta}(X)+I^\omega_{\alpha \beta}(Y)}{2\sqrt{(I_\alpha(X)I_\alpha(Y))^\beta}},
\end{align}
where $I_\alpha(\cdot)$ is IGF and $f_{e, \alpha}(\cdot)$ and $g_{e,\alpha}(\cdot)$ are PDFs of $X_{e,\alpha}$ and $Y_{e,\alpha}$, respectively.
\end{proposition}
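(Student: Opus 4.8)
The plan is to reduce the statement to the escort density formula in (\ref{eq4.1}) together with Proposition \ref{prop4.1}. First I would write the densities of the escort variables explicitly: since the normalizing constant in (\ref{eq4.1}) with baseline $f(\cdot)$ equals $\int_{0}^{\infty}f^{\alpha}(u)\,du=I_{\alpha}(X)$ (and likewise $\int_{0}^{\infty}g^{\alpha}(u)\,du=I_{\alpha}(Y)$), where $I_{\alpha}(\cdot)$ is the IGF of (\ref{eq1.3}), we have
\begin{align*}
f_{e,\alpha}(x)=\frac{f^{\alpha}(x)}{I_{\alpha}(X)},\qquad g_{e,\alpha}(x)=\frac{g^{\alpha}(x)}{I_{\alpha}(Y)}.
\end{align*}

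Next, I would substitute these into the definition (\ref{eq4.12}) of the weighted $\beta$-cross informational energy and pull the constants out of the integral:
\begin{align*}
CI^\omega_\beta(X_{e,\alpha},Y_{e,\alpha})
=\int_{0}^{\infty}\omega(x)\sqrt{f_{e,\alpha}^{\beta}(x)\,g_{e,\alpha}^{\beta}(x)}\,dx
=\frac{1}{\sqrt{\big(I_{\alpha}(X)I_{\alpha}(Y)\big)^{\beta}}}\int_{0}^{\infty}\omega(x)\sqrt{f^{\alpha\beta}(x)\,g^{\alpha\beta}(x)}\,dx.
\end{align*}
The remaining integral is, by (\ref{eq4.12}) read with the exponent $\alpha\beta$ in place of $\beta$, exactly $CI^\omega_{\alpha\beta}(X,Y)$, which gives the claimed equality. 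For the inequality I would then invoke Proposition \ref{prop4.1} with $\beta$ replaced by $\alpha\beta$, obtaining $CI^\omega_{\alpha\beta}(X,Y)\le\tfrac12\big[I^\omega_{\alpha\beta}(X)+I^\omega_{\alpha\beta}(Y)\big]$, and divide through by the positive constant $\sqrt{(I_{\alpha}(X)I_{\alpha}(Y))^{\beta}}$; chaining this with the equality yields the displayed result.

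I do not expect any genuine obstacle: the argument is a direct substitution followed by a bound already established in the paper. The only points requiring a word of care are the bookkeeping of the normalizing constants $I_{\alpha}(X)$, $I_{\alpha}(Y)$ and the observation that Proposition \ref{prop4.1}, whose proof uses only the geometric--arithmetic mean inequality, applies with exponent $\alpha\beta>0$ provided the quantities $I^\omega_{\alpha\beta}(X)$ and $I^\omega_{\alpha\beta}(Y)$ exist — which is assumed throughout the text.
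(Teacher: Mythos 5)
Your proposal is correct and follows essentially the same route as the paper: substitute the escort densities into the definition of $CI^\omega_\beta$, factor out the normalizing constants $I_\alpha(X)$ and $I_\alpha(Y)$ to get the equality with $CI^\omega_{\alpha\beta}(X,Y)$, and then apply Proposition \ref{prop4.1} at exponent $\alpha\beta$ for the upper bound. No gaps.
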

\begin{proof}
From (\ref{eq4.12}), we obtain
\begin{align}\label{eq4.16}
CI^\omega_\beta(X_{e,\alpha},Y_{e,\alpha})&=\int_{0}^{\infty}\omega(x)\sqrt{f^\beta_{e,\alpha}(x)g^\beta_{e,\alpha}(x)}dx\nonumber\\
&=\int_{0}^{\infty}\omega(x)\sqrt{\Big(\frac{f^\alpha(x)}{\int_{0}^{\infty}f^\alpha(x)dx}\Big)^\beta\Big(\frac{g^\alpha(x)}{\int_{0}^{\infty}g^\alpha(x)dx}\Big)^\beta}dx\nonumber\\
&=\frac{\int_{0}^{\infty}\omega(x)\sqrt{f^{\alpha \beta}(x)g^{\alpha \beta}(x)}dx}{\sqrt{(I_\alpha(X)I_\alpha(Y))^\beta}}.
\end{align}
From (\ref{eq4.16}), we have 
\begin{align}\label{eq4.17}
CI^\omega_\beta(X_{e,\alpha},Y_{e,\alpha})=\frac{CI^\omega_{\alpha \beta}(X,Y)}{\sqrt{(I_\alpha(X)I_\alpha(Y))^\beta}}.
\end{align}
Further, using Proposition \ref{prop4.1} in (\ref{eq4.17}), we obtain
\begin{align}\label{eq4.18}
CI^\omega_\beta(X_{e,\alpha},Y_{e,\alpha})\le \frac{I^\omega_{\alpha \beta}(X)+I^\omega_{\alpha \beta}(Y)}{2\sqrt{(I_\alpha(X)I_\alpha(Y))^\beta}}.
\end{align}
Combining (\ref{eq4.17}) and (\ref{eq4.18}), the required relation is made. Hence, completes the proof.
\end{proof}

}

\section{GWIGF and GWRIGF for residual lifetime}
The residual life is a useful concept in life testing studies. {It helps in determining maintenance when an asset might fail of need maintenance in engineering as this information is very needful for optimizing maintenance schedules, minimizing downtime and maximizing asset utilization. In finance, the residual lifetime uses to the planning knowing of investments which helps in making informed decisions about portfolio management and risk assessment. Residual life-based informational measures use for predictive maintenance decision-making in various fields like reliability engineering, medicine science and finance. In this regard, one may refer to \cite{do2022residual}.}
 Suppose a system has survived until  $t>0.$ Then, the additional life of the system is known as the residual lifetime. Denoting $X$ as the random lifetime of a system, the residual lifetime is the conditional RV, denoted by $X_{t}=[X-t|X>t]$. Its PDF is 
\begin{eqnarray}
f_{t}(x)=\frac{f(x)}{\bar{F}(t)},~~x>t>0.
\end{eqnarray}
This section focuses on the study of GWIGF and GWRIGF of the residual lifetime.  

\subsection{Residual GWIGF}
The following definition provides the mathematical representation of the residual GWIGF.
\begin{definition}
	Suppose $X$ has PDF $f(\cdot)$ and survival function $\bar{F}(\cdot)$.  The GWIGF for $X_{t}$ is given by 
	\begin{eqnarray}\label{eq5.2}
	I_{\beta}^{\omega}(X;t)=\int_{t}^{\infty}\omega(x)\left(\frac{f(x)}{\bar{F}(t)}\right)^{\beta}dx=\frac{1}{\bar{F}^{\beta-1}(t)}E[\omega(X)f^{\beta-1}(X)|X>t],~\beta\ge1.
	\end{eqnarray}
\end{definition}
Note that as $t\rightarrow 0$, the residual GWIGF in (\ref{eq5.2}) reduces to the GWIGF given in (\ref{eq2.1}). Further, (\ref{eq5.2}) becomes residual IGF due to \cite{kharazmi2021jensen}.  Suppose $X_{\beta}$ and $X$ are non-negative RVs with survival functions $\bar{F}_{\beta}(x)$ and $\bar{F}(x)$, respectively. Further, assume that $X_{\beta}$ and $X$ follow proportional hazards  (PH) model, that is, their survival functions satisfy 
\begin{eqnarray}\label{eq5.3}
\bar{F}_{\beta}(x)=(\bar{F}(x))^{\beta},~\beta>0.
\end{eqnarray}
We note that though the PH model was originally introduced by \cite{lehmann1953proportional}, it has been widely applied by researchers and practitioners after the rationale explained by \cite{cox1972regression} {in order
to estimate the effects of different covariates influencing the times to the
failures of a system. The model has been used rather extensively in
biomedicine, and recently, interest in its application in reliability engineering
has increased.} For some existing literature regarding PH model upto the year $1994$, we refer to \cite{kumar1994proportional}.  The hazard rate of $X_{\beta}$ is 
$h_{\beta}(x)=\beta\frac{f(x)}{\bar{F}(x)},~x>0$.
It can be established after some calculations that 
\begin{eqnarray}\label{eq5.4}
E[\omega(X_{\beta})h_{\beta}^{\beta-1}(X_{\beta})|X_{\beta}>t]=\beta^{\beta}\int_{t}^{\infty}\omega(x)\left(\frac{f(x)}{\bar{F}(t)}\right)^{\beta}dx=\beta^{\beta}I_{\beta}^{\omega}(X;t),
\end{eqnarray}
that is, the GWIGF for residual lifetime is expressed in the form of a conditional expectation of the hazard rate function of a PH model.
The $k$-th order derivative of the residual GWIGF given in (\ref{eq5.2}) with respect to $\beta$
is 
\begin{eqnarray}\label{eq5.5}
\frac{\partial^{k}I_{\beta}^{\omega}(X;t)}{\partial \beta^{k}}=\int_{0}^{\infty}\omega(x) \left(\frac{f(x)}{\bar{F}(t)}\right)^{\beta}\left(\log\frac{f(x)}{\bar{F}(t)}\right)^{k}dx,~\beta\ge1.
\end{eqnarray}
Now, from (\ref{eq5.2}) and (\ref{eq5.5}), we obtain the following observations:
\begin{itemize}
	\item $I_{\beta}^{\omega}(X;t)|_{\beta=1}=E(\omega(X)|X>t);$
	\item $I_{\beta}^{\omega}(X;t)|_{\beta=2}=-2 J^{\omega}(X;t);$
	\item $\frac{\partial I_{\beta}^{\omega}(X;t)}{\partial \beta}|_{\beta=1}=-H^{\omega}(X;t),$
\end{itemize}
where $J^{\omega}(X;t)=-\frac{1}{2}\int_{t}^{\infty}\omega(u)(\frac{f(u)}{\bar{F}(t)})^{2}du$
is known as the residual weighted extropy (see \cite{sathar2021dynamic}) and $H^{\omega}(X;t)=-\int_{t}^{\infty}\omega(u)\frac{f(u)}{\bar{F}(t)}\log \frac{f(u)}{\bar{F}(t)}du$ is known as the residual weighted Shannon entropy. {In particular, for $
\beta=2$, the residual GWIGF in (\ref{eq5.2}) reduces to $\int_{t}^{\infty}\omega(x)\left(\frac{f(x)}{\bar{F}(t)}\right)^{2}dx$,  which is known as residual weighted informational energy function. } The following example provides closed-form expression of the residual GWIGF for Pareto and exponential distributions. Note that for the computation purpose, here we take $\omega(x)=x$.
\begin{example}\label{ex5.1}~
	\begin{itemize}
	\item[(i)] If $X$ follows Pareto-I type distribution with PDF $f(x)=\frac{a \gamma^a}{x^{a+1}},$ for $x>\gamma$ and $a>0$, then
	\begin{eqnarray}\label{eq5.6}
	I^\omega_\beta(X;t)=\frac{1}{a-1}\left(\frac{a \gamma^a}{1-(\frac{\gamma}{t})^a}\right)^\beta t^{1-a},~a>1,~\beta\ge1;
	\end{eqnarray}
	The graph of (\ref{eq5.6}) with respect to $t$ is depicted in Figure $4(a)$.
	\item[(ii)] Suppose $X$ has CDF $F(x)=1-e^{-\lambda x},~ x>0$ and $\lambda>0$. Then, 
	\begin{eqnarray}\label{eq5.7}
	I^\omega_\beta(X;t)=\frac{\lambda^\beta}{\beta^2\lambda^2}(\beta \lambda t+1),~\beta\ge1.
	\end{eqnarray}
	The graph of (\ref{eq5.7}) with respect to $t$ is presented in Figure $4(b).$
	\end{itemize}
	\begin{figure}[h!]\label{fig5}
		\centering
	\subfigure[]{\label{c1}\includegraphics[height=1.9in]{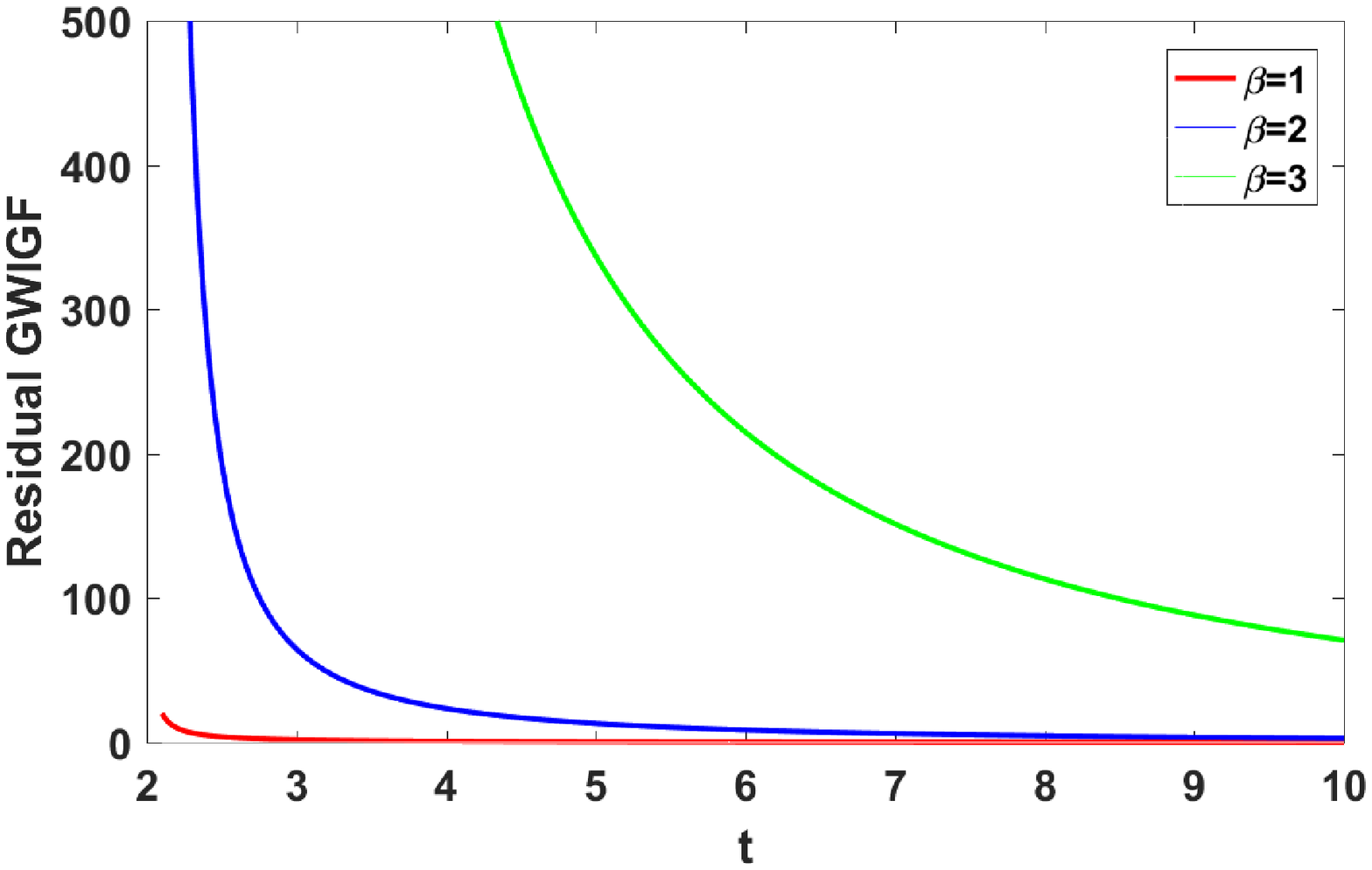}}
	\subfigure[]{\label{c1}\includegraphics[height=1.9in]{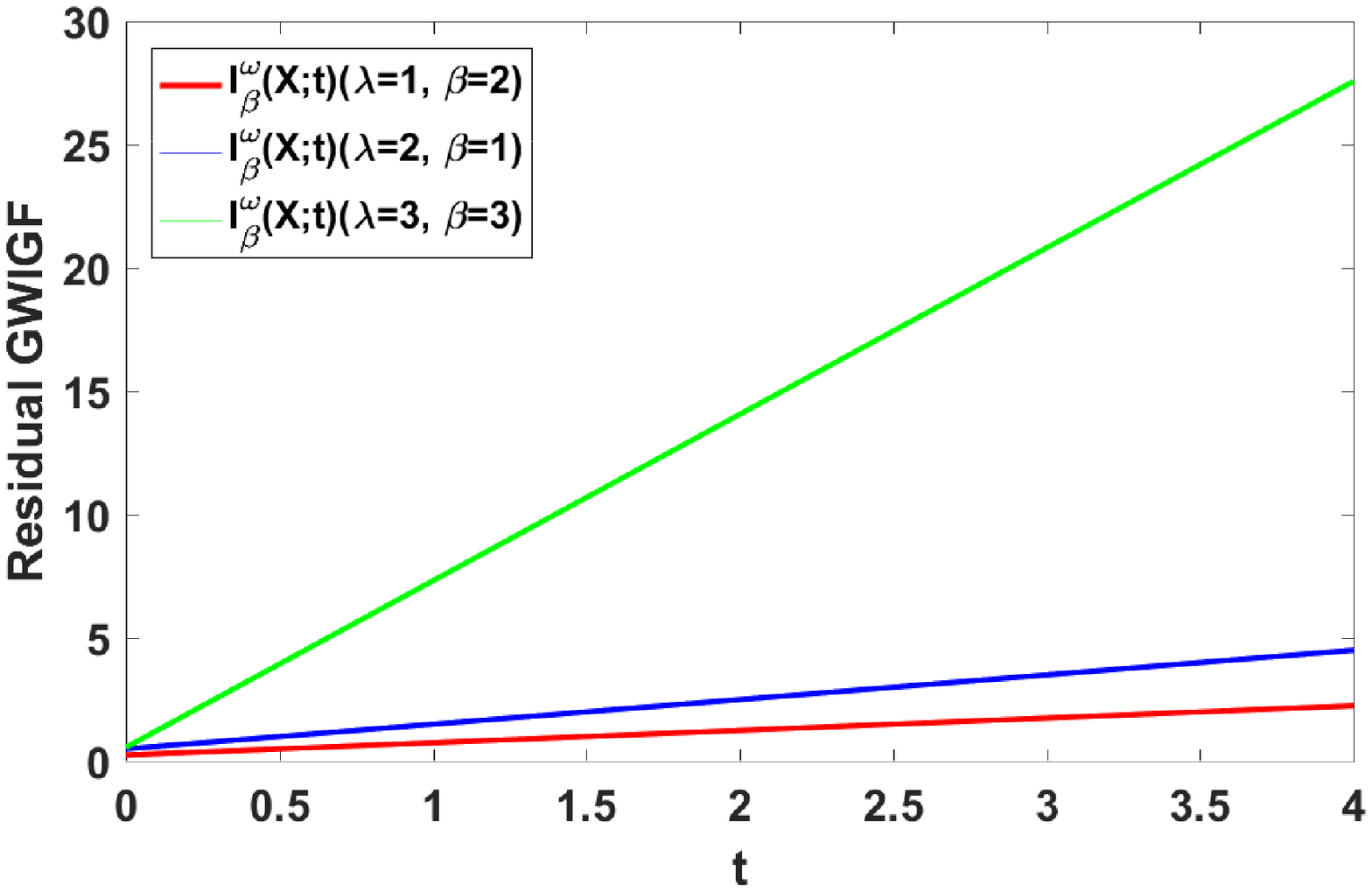}}
		\caption{$(a)$ Graph for residual GWIGF when $\gamma=2$, $a=3$  as in Example \ref{ex5.1}$(i)$. Here, three different choices of $\beta$, say $1,2,$ and $3$ are considered.
		$(b)$ Plot of the residual GWIGF  as in Example \ref{ex5.1}$(ii)$.}
	\end{figure}
\end{example}

Denote by $h(x)=\frac{f(x)}{\bar{F}(x)}$ the hazard rate of $X$ with PDF $f(\cdot)$ and survival function  $\bar{F}(\cdot)$. The cumulative hazard rate of $X$, $\widetilde{H}(x)=-\log \bar{F}(x)$ can be expressed in terms of the hazard rate as 
\begin{eqnarray}
\widetilde{H}(x)=\int_{0}^{x}h(u)du,
\end{eqnarray}
 {Note that it plays an important role in the study of ageing properties of lifetime variables.
Next, we establish a relation between the expectation of GWIGF with respect to proportional hazard rate random variable $X_\beta$ and the expectation of cumulative hazard rate of $X$.   }

\begin{theorem}\label{th5.1}
	We have 
	\begin{eqnarray}
	E[\omega(X)f^{\beta-1}(X)\widetilde{H}(X)]=\frac{1}{\beta}E_{X_{\beta}}\left[I_{\beta}^{\omega}(X;X_{\beta})\right], 
	\end{eqnarray}
	where $E_{X_{\beta}}[\cdot,\cdot]$ means the expectation is with respect to  $X_{\beta}$.
\end{theorem}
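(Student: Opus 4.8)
The plan is to unravel both sides into explicit integrals and connect them by a single change in the order of integration. First I would record the density of the proportional hazards variable $X_\beta$: differentiating (\ref{eq5.3}) gives $f_\beta(x) = \beta\,(\bar F(x))^{\beta-1} f(x)$, so $f_\beta(x) = \beta\, h(x)\, (\bar F(x))^{\beta}$ where $h = f/\bar F$. Next I would expand the right-hand side using the definition (\ref{eq5.2}) of the residual GWIGF, writing $I_\beta^\omega(X;t) = \bar F^{-\beta}(t)\int_t^\infty \omega(x) f^\beta(x)\,dx$, so that
\begin{eqnarray*}
	E_{X_\beta}\!\left[I_\beta^\omega(X;X_\beta)\right]
	&=& \int_0^\infty I_\beta^\omega(X;t)\, f_\beta(t)\,dt
	= \int_0^\infty \frac{1}{\bar F^{\beta}(t)}\left(\int_t^\infty \omega(x) f^\beta(x)\,dx\right)\beta\,(\bar F(t))^{\beta-1} f(t)\,dt\\
	&=& \beta \int_0^\infty h(t)\left(\int_t^\infty \omega(x) f^\beta(x)\,dx\right)dt.
\end{eqnarray*}

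Then I would apply Tonelli's theorem (the integrand is non-negative since $\omega\ge 0$) to swap the order of integration over the region $\{0<t<x<\infty\}$, obtaining
\begin{eqnarray*}
	E_{X_\beta}\!\left[I_\beta^\omega(X;X_\beta)\right]
	= \beta \int_0^\infty \omega(x) f^\beta(x)\left(\int_0^x h(t)\,dt\right)dx
	= \beta \int_0^\infty \omega(x) f^{\beta-1}(x)\,\widetilde H(x)\, f(x)\,dx,
\end{eqnarray*}
where the last step uses $\widetilde H(x) = \int_0^x h(u)\,du$. The integral on the right is exactly $\beta\, E[\omega(X) f^{\beta-1}(X)\widetilde H(X)]$, and dividing by $\beta$ yields the claimed identity.

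The computation is essentially routine; the only point requiring care is the interchange of integrals, which I would justify by non-negativity of the integrand (Tonelli), together with the standing assumption in the paper that all the relevant integrals and expectations exist. I would also note the two convenient bookkeeping facts used along the way — that $(\bar F(t))^{\beta-1}/(\bar F(t))^{\beta} = 1/\bar F(t)$ and that $h(t)/\bar F(t)$ never actually appears, only $h(t) = f(t)/\bar F(t)$ — so no extra regularity on $\bar F$ beyond positivity on the support is needed.
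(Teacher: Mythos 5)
Your proposal is correct and is essentially the paper's own argument run in the opposite direction: the paper starts from $E[\omega(X)f^{\beta-1}(X)\widetilde{H}(X)]$, expands $\widetilde{H}$, applies Fubini, and then recognizes the inner integral as $\bar{F}^{\beta-1}(t)I_{\beta}^{\omega}(X;t)$ and the outer measure as $\frac{1}{\beta}f_{\beta}(t)\,dt$, whereas you start from the $X_\beta$-expectation and swap back. The key ingredients (the density $f_\beta(t)=\beta(\bar F(t))^{\beta-1}f(t)$, the definition (\ref{eq5.2}), and the interchange of integrals over $\{0<t<x\}$) are identical, so no further comment is needed.
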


\begin{proof}
	We obtain
	\begin{eqnarray*}
	E[\omega(X)f^{\beta-1}(X)\widetilde{H}(X)]&=&\int_{0}^{\infty}\omega(x)f^{\beta}(x)\left(\int_{0}^{x}h(t)dt\right)dx\\
	&=& \int_{0}^{\infty}h(t)\left(\int_{t}^{\infty}\omega(x)f^{\beta}(x)dx\right)dt\\
	&=& \int_{0}^{\infty}f(t)\left(\int_{t}^{\infty}\omega(x)f^{\beta-1}(x)\frac{f(x)}{\bar{F}(t)}dx\right)dt\\
	&=&\int_{0}^{\infty}f(t) E[\omega(X)f^{\beta-1}(X)|X>t]dt\\
	&=&\int_{0}^{\infty}f(t)\bar{F}^{\beta-1}(t)I_{\beta}^{\omega}(X;t)dt\\
	&=& \frac{1}{\beta}E_{X_{\beta}}\left[I_{\beta}^{\omega}(X;X_{\beta})\right],
	\end{eqnarray*}
where the second equality is due to Fubinni's theorem and the fifth equality is due to  (\ref{eq5.2}). The proof is established.
\end{proof}
{We note that when case $\omega(x)=1$, Theorem  \ref{th5.1} reduces to Theorem $15$ of \cite{kharazmi2021jensen}.} In the renewal theory, the equilibrium distribution arises naturally. It is a special case of the weighted distribution {to determine the mean value of thermodynamic quantities and also plays an important role in reliability studies, for details see \cite{gupta2007role}}. Associated with an RV $X$, the CDF and PDF of the equilibrium RV $X_{E}$ are respectively given by 
\begin{eqnarray}
F_{e}(x)=\frac{1}{\mu}\int_{0}^{x}\bar{F}(t)dt~~\mbox{and}~~f_{e}(x)=\frac{\bar{F}(x)}{\mu},~~x>0,
\end{eqnarray}
where $E(X)=\mu<\infty.$ For details about equilibrium distribution, we refer to \cite{fagiuoli1993new}. The mean residual life (MRL) is a useful concept in reliability and survival analysis, see \cite{vu2015stationary}. Given that a system has survived upto time $t$, the MRL provides average remaining lifetime of the system. For a system with lifetime $X$, the MRL is given by
\begin{eqnarray}\label{eq5.11}
M_{X}(t)=E[X-t|X>t]=\int_{t}^{\infty}\frac{\bar{F}(x)}{\bar{F}(t)}dx.
\end{eqnarray}
In a similar way, the MRL of $X_{\beta}$ can be defined and we denote it by $M_{X_{\beta}}(t).$ The next theorem, {we evaluate residual GWIGF of equilibrium distribution and } present a relation between the residual GWIGF of $X_{E}$ with the MRL of $X$ and weighted MRL of $X_{\beta}$.

\begin{theorem}\label{th5.2}
	The residual GWIGF of $X_{E}$ is 
	\begin{eqnarray}
	I_{\beta}^{\omega}(X_{E};t)=\frac{M_{X_{\beta}}^{\omega}(t)}{(M_{X}(t))^{\beta}},~t>0,
	\end{eqnarray}
	where $M_{X}(t)$ is given in (\ref{eq5.11}) and $M_{X_{\beta}}^{\omega}(t)$ is the weighted MRL of $X_{\beta}$ with survival function in (\ref{eq5.3}).
\end{theorem}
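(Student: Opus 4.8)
The plan is to compute everything directly from the definitions, the only non-trivial ingredient being the standard identity relating the integrated survival function to the MRL. First I would record the survival function of the equilibrium variable $X_E$: from $F_e(x)=\frac{1}{\mu}\int_0^x\bar F(u)\,du$ we get $\bar F_e(t)=\frac{1}{\mu}\int_t^\infty\bar F(u)\,du$. Rewriting the definition of the MRL in (\ref{eq5.11}) as $\int_t^\infty\bar F(u)\,du=\bar F(t)\,M_X(t)$, this becomes $\bar F_e(t)=\bar F(t)\,M_X(t)/\mu$.

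Next I would substitute $f_e(x)=\bar F(x)/\mu$ together with the expression just derived for $\bar F_e(t)$ into the definition (\ref{eq5.2}) of the residual GWIGF applied to $X_E$. The factors $\mu$ cancel, so that $f_e(x)/\bar F_e(t)=\bar F(x)/(\bar F(t)\,M_X(t))$, and hence
$$
I_{\beta}^{\omega}(X_E;t)=\int_t^\infty\omega(x)\left(\frac{\bar F(x)}{\bar F(t)\,M_X(t)}\right)^{\beta}dx=\frac{1}{(M_X(t))^{\beta}}\cdot\frac{1}{\bar F^{\beta}(t)}\int_t^\infty\omega(x)\,\bar F^{\beta}(x)\,dx .
$$

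Finally I would identify the remaining factor with the weighted MRL of $X_\beta$. By (\ref{eq5.3}) the variable $X_\beta$ has survival function $\bar F_\beta(x)=(\bar F(x))^{\beta}$, so that $\frac{1}{\bar F_\beta(t)}\int_t^\infty\omega(x)\,\bar F_\beta(x)\,dx=M_{X_\beta}^{\omega}(t)$, which produces the stated formula. I do not anticipate any genuine obstacle here: the argument is a short chain of substitutions, and the only points to watch are keeping the powers of $\bar F(t)$ straight when splitting off the $(M_X(t))^{\beta}$ factor and reading the weighted MRL $M_{X_\beta}^{\omega}$ with the weight $\omega$ inside the integrand against $\bar F_\beta$ (the positivity $M_X(t)>0$ on the support guarantees the division is legitimate).
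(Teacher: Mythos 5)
Your proposal is correct and follows essentially the same route as the paper: substitute $f_e(x)=\bar F(x)/\mu$ and $\bar F_e(t)=\frac{1}{\mu}\int_t^\infty\bar F(u)\,du=\bar F(t)M_X(t)/\mu$ into the definition of the residual GWIGF, cancel $\mu$, and recognize the remaining integral as the weighted MRL of $X_\beta$. Your write-up is in fact slightly more careful about the integration limits (the paper's display writes $\int_0^\infty$ where $\int_t^\infty$ is meant), but there is no substantive difference.
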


\begin{proof}
	We have 
	\begin{eqnarray}
	I_{\beta}^{\omega}(X_{E};t)=\int_{0}^{\infty}\omega(x)\left(\frac{f_{e}(x)}{\bar{F}_{e}(t)}\right)^{\beta}dx=\frac{\int_{0}^{\infty}\omega(x)\frac{\bar{F}^{\beta}(x)}{\bar{F}^{\beta}(t)}dx}{\left(\int_{t}^{\infty}\frac{\bar{F}(x)}{\bar{F}(t)}dx\right)^{\beta}}=\frac{M_{X_{\beta}}^{\omega}(t)}{(M_{X}(t))^{\beta}}.
	\end{eqnarray}
	Thus, the result is proved.
\end{proof}
{From Theorem \ref{th5.2},  we observe that for different weight functions produce different residual GWIGF of equilibrium distribution. In particular, the weight function $\omega(x)=1$, Theorem \ref{th5.2} reduces to  Theorem $14$ of \cite{kharazmi2021jensen}. The hazard rate order provides valuable insights into the comparative behaviour of failure rates or hazard rates of different systems or populations. For example, in medical research and epidemiology, hazard rate order is used to compare the mortality or recurrence rates among different patient groups or treatment interventions. This information aids healthcare professionals in making informed decisions about patient care and treatment strategies.} Next, the sufficient conditions for which the residual GWIGFs of two distributions are ordered, have been derived.
\begin{theorem}
	Suppose the hazard rates of $X$ and $Y$ are denoted by $h(\cdot)$ and $s(\cdot)$, respectively. Further, suppose either $F(\cdot)$ or $G(\cdot)$ has DFR, and  $Y \leq_{hr}X$. Then,  $$I^\omega_\beta(X;t)\leq I^\omega_\beta(Y;t), ~~\text{for all}~ \beta\geq1,$$
	provided $\omega(\cdot)$ is decreasing. 
\end{theorem}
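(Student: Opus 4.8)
The plan is to reduce the stochastic comparison of the two residual GWIGFs to a pointwise inequality between integrands, after rewriting each in terms of residual quantiles and hazard rates. Writing $q_X(p,t)=F^{-1}\big(F(t)+p\bar F(t)\big)$ for the $p$-th quantile of $X_t=[X-t\mid X>t]$ and substituting $x=q_X(p,t)$ in (\ref{eq5.2}), and using $\bar F\big(q_X(p,t)\big)=(1-p)\bar F(t)$ together with $f=h\bar F$, one arrives at the representation
\begin{eqnarray*}
I^\omega_\beta(X;t)=\int_0^1 \omega\big(q_X(p,t)\big)\big(h(q_X(p,t))\big)^{\beta-1}(1-p)^{\beta-1}\,dp,
\end{eqnarray*}
and the analogous identity for $Y$ with $s(\cdot)$ and $q_Y(p,t)$ replacing $h(\cdot)$ and $q_X(p,t)$ (this can be cross-checked against Example \ref{ex5.1}). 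It then suffices to prove, for each admissible $t$ and each $p\in(0,1)$, that $\omega\big(q_X(p,t)\big)\big(h(q_X(p,t))\big)^{\beta-1}\le \omega\big(q_Y(p,t)\big)\big(s(q_Y(p,t))\big)^{\beta-1}$, since $(1-p)^{\beta-1}\ge 0$ and integration in $p$ preserves the inequality.

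Next I would exploit the standard characterization that $Y\le_{hr}X$ is equivalent to $[Y-t\mid Y>t]\le_{st}[X-t\mid X>t]$ for every $t$, which in quantile form means $q_Y(p,t)\le q_X(p,t)$ for all $p$ and $t$. Because $\omega$ is decreasing and non-negative, this immediately gives $\omega\big(q_Y(p,t)\big)\ge \omega\big(q_X(p,t)\big)\ge 0$, so the weight factors are ordered the right way.

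The step that actually consumes the DFR hypothesis is showing $s\big(q_Y(p,t)\big)\ge h\big(q_X(p,t)\big)\ge 0$, which then yields $\big(s(q_Y(p,t))\big)^{\beta-1}\ge \big(h(q_X(p,t))\big)^{\beta-1}$ since $\beta-1\ge 0$. Note that $Y\le_{hr}X$ already gives $s(u)\ge h(u)$ for all $u$. If $X$ has DFR then $h$ is decreasing, so $h\big(q_X(p,t)\big)\le h\big(q_Y(p,t)\big)\le s\big(q_Y(p,t)\big)$; if instead $Y$ has DFR then $s$ is decreasing, so $s\big(q_Y(p,t)\big)\ge s\big(q_X(p,t)\big)\ge h\big(q_X(p,t)\big)$. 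Either way the required factor inequality holds; multiplying the two ordered pairs of non-negative factors together with $(1-p)^{\beta-1}$ and integrating over $p\in(0,1)$ then gives $I^\omega_\beta(X;t)\le I^\omega_\beta(Y;t)$.

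I expect the main obstacle to be the first step: carefully justifying the residual-quantile representation (in particular the change of variables when $F$ is only continuous, not strictly increasing) and, in the second step, correctly translating the hazard-rate order into $q_Y(p,t)\le q_X(p,t)$ with the shift by $t$ properly handled. Once that representation is in hand, the DFR case analysis is short and is the conceptual heart of the argument; indeed, an alternative route would be to first pass from $Y\le_{hr}X$ plus DFR to a dispersive-type ordering and then compare densities at matched quantiles, but the direct computation above keeps the argument self-contained.
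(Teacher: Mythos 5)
Your argument is correct, and it takes a genuinely different route from the paper's. The paper works through the proportional-hazards variables $X_\beta$, $Y_\beta$: it uses the identity in (\ref{eq5.4}) to write $\beta^{\beta}I^\omega_\beta(X;t)=E[\omega(X_{\beta})h_{\beta}^{\beta-1}(X_{\beta})\mid X_\beta>t]$, notes that $Y\le_{hr}X$ forces $[X_\beta-t\mid X_\beta>t]\ge_{st}[Y_\beta-t\mid Y_\beta>t]$, and then invokes the DFR hypothesis to make $\omega(x)h_{\beta}^{\beta-1}(x)$ decreasing so that the stochastic order reverses the conditional expectations, finishing with $h\le s$. Your proof instead changes variables to the residual quantile scale, obtaining $I^\omega_\beta(X;t)=\int_0^1 \omega(q_X(p,t))h^{\beta-1}(q_X(p,t))(1-p)^{\beta-1}dp$ (which I checked reproduces (\ref{eq5.7})), and compares integrands pointwise at matched quantiles; this is the explicit coupling that underlies the paper's expectation comparison, so the two arguments are mathematically cognate but organized quite differently. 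Two things your version buys: it avoids introducing the auxiliary PH random variables and the identity (\ref{eq5.4}) altogether, and it handles both halves of the hypothesis symmetrically --- your case analysis covers ``$X$ has DFR'' and ``$Y$ has DFR'' separately, whereas the paper's written proof only treats the case where $F$ is DFR (the other case requires the symmetric rearrangement of its two inequalities, bounding $h$ by $s$ first and then using that $\omega(x)s_{\beta}^{\beta-1}(x)$ is decreasing). The one point to state explicitly in a polished write-up is the justification of the change of variables $x=q_X(p,t)$; since the paper assumes all random variables are absolutely continuous with densities, $F$ is continuous and the substitution $f(x)\,dx=\bar F(t)\,dp$ is legitimate, so this is a presentational rather than a mathematical gap.
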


\begin{proof}
	It is known that $Y\leq_{hr}X$ implies $s(x)\geq h(x),$ for  $x>0$. Further,  $Y\leq_{hr}X\implies Y_t\leq_{st}X_t\implies\frac{\bar F(x)}{\bar F(t)}\geq\frac{\bar G(x)}{\bar G(t)},$ for all $x\geq t>0$ (see \cite{shaked2007stochastic}). Thus, for $\beta\ge1, 
	$$$X_{\beta,t}=[X_\beta-t|X_\beta>t]
	\geq_{st}[Y_\beta-t|Y_\beta>t]=Y_{\beta,t},$$
	where $X_\beta$ and $Y_\beta$ are PH RVs associated with $X$ and $Y,$ respectively. Under the assumption made, $F(\cdot)$ is DFR and $\omega(\cdot)$ is decreasing. This implies that $\omega(x)h_{\beta}^{\beta-1}(x)$ is decreasing with respect to $x$, for $\beta\ge1$. Now, using (\ref{eq5.4}) and $(1.A.7)$ of \cite{shaked2007stochastic}, we get
	\begin{eqnarray*}
		I^\omega_\beta(X;t)=\frac{E[\omega(X_{\beta})h^{\beta-1}_{\beta}(X_{\beta})|X_\beta>t]}{\beta^{\beta}}
		&\leq&\frac{E[\omega(Y_{\beta})h^{\beta-1}_{\beta}(Y_{\beta})|Y_\beta>t]}{\beta^{\beta}}\\&\le& \frac{E[\omega(Y_{\beta})s^{\beta-1}_{\beta}(Y_{\beta})|Y_\beta>t]}{\beta^{\beta}}
		=I^\omega_\beta(Y;t).
	\end{eqnarray*} 
	Thus, the theorem is established.
\end{proof}  

{It is well known that the monotone functions play a crucial role in information theory, a branch of mathematics and computer science that deals with quantifying and analyzing information.  In data compression algorithms, monotone functions are applied to transform data in a way that reduces redundancy and minimizes the amount of information needed to represent it.} Now, bounds of the residual GWIGF are obtained under the assumption that it is monotone with respect to $t.$
\begin{theorem}\label{th5.4}
	Suppose $X$ is an RV with CDF and PDF $F(\cdot)$ and $f(\cdot),$ respectively. Further, let $I_{\beta}^{\omega}(X;t)$ be increasing (decreasing) in $t$. Then,
	\begin{eqnarray}\label{eq5.14}
	I^\omega_\beta(X;t)\geq (\leq)\frac{1}{\beta}\omega(t)h^{\beta-1}(t),~ \text{for}~ \beta\ge1.
	\end{eqnarray}
\end{theorem}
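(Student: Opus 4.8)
The plan is to differentiate the residual GWIGF $I_{\beta}^{\omega}(X;t)$ with respect to $t$ and read off the sign condition. Starting from the defining expression
\[
I_{\beta}^{\omega}(X;t)=\frac{1}{\bar F^{\beta}(t)}\int_{t}^{\infty}\omega(x)f^{\beta}(x)\,dx ,
\]
I would use the quotient/product rule. Writing $A(t)=\int_{t}^{\infty}\omega(x)f^{\beta}(x)\,dx$, so that $A'(t)=-\omega(t)f^{\beta}(t)$, and noting $\frac{d}{dt}\bar F^{-\beta}(t)=\beta f(t)\bar F^{-\beta-1}(t)$, the derivative becomes
\[
\frac{\partial}{\partial t}I_{\beta}^{\omega}(X;t)
=\beta\,\frac{f(t)}{\bar F^{\beta+1}(t)}\,A(t)-\frac{\omega(t)f^{\beta}(t)}{\bar F^{\beta}(t)}
=\beta\,h(t)\,I_{\beta}^{\omega}(X;t)-\omega(t)h^{\beta}(t)\bar F^{\,0}(t)\cdot\frac{1}{\,}\!,
\]
which after tidying is
\[
\frac{\partial}{\partial t}I_{\beta}^{\omega}(X;t)=\beta\,h(t)\!\left(I_{\beta}^{\omega}(X;t)-\frac{1}{\beta}\,\omega(t)\,h^{\beta-1}(t)\right).
\]

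Now the argument is immediate: $h(t)=f(t)/\bar F(t)\ge 0$ and $\beta\ge 1>0$, so the sign of $\frac{\partial}{\partial t}I_{\beta}^{\omega}(X;t)$ equals the sign of $I_{\beta}^{\omega}(X;t)-\frac{1}{\beta}\omega(t)h^{\beta-1}(t)$. If $I_{\beta}^{\omega}(X;t)$ is increasing in $t$, then the left-hand side is $\ge 0$, forcing $I_{\beta}^{\omega}(X;t)\ge\frac{1}{\beta}\omega(t)h^{\beta-1}(t)$; if it is decreasing, the inequality reverses. This is exactly \eqref{eq5.14}.

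The only genuinely delicate point is justifying differentiation under the integral sign and the application of the fundamental theorem of calculus to $A(t)$ — that is, ensuring $\omega(x)f^{\beta}(x)$ is locally integrable and that the relevant integrals converge, which is covered by the blanket assumption in the paper that all integrals and derivatives involved exist. Beyond that, the computation of $\partial I_{\beta}^{\omega}(X;t)/\partial t$ is routine, and I would present it compactly rather than belaboring each step. An alternative, fully equivalent route is to differentiate the conditional-expectation form $I_{\beta}^{\omega}(X;t)=\bar F^{1-\beta}(t)\,E[\omega(X)f^{\beta-1}(X)\mid X>t]$, but the direct approach above is cleanest.
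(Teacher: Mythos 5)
Your proposal is correct and follows essentially the same route as the paper: differentiating the residual GWIGF in $t$ to obtain $\frac{d}{dt}I^\omega_\beta(X;t)=-\omega(t)h^\beta(t)+\beta h(t)I^\omega_\beta(X;t)$ and then reading off the sign from the monotonicity hypothesis. Apart from a garbled intermediate display (the stray $\bar F^{\,0}(t)\cdot\frac{1}{\,}$ term), which your tidied final formula corrects, the argument matches the paper's proof.
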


\begin{proof}
	Differentiating (\ref{eq5.2}) with respect to $t$
	\begin{eqnarray}\label{eq5.15}
	\frac{d}{dt} I^\omega_\beta(X;t)=-\omega(t)h^\beta(t)+\beta h(t) I^\omega_\beta(X;t).
	\end{eqnarray}
	Under the assumptions made,  from (\ref{eq5.15}), we obtain 
	\begin{eqnarray}\label{eq5.16}
	-\omega(t)h^\beta(t)+\beta h(t)I^\omega_\beta(X;t) \geq (\leq)0,
	\end{eqnarray}
	from which the result readily follows. 
\end{proof}

For the purpose of illustration of the result in Theorem \ref{th5.4}, the following example is presented.

\begin{example}\label{ex5.2}
	Suppose $X$ follows exponential distribution with rate parameter $\lambda>0.$ From (\ref{eq5.7}), clearly $I_{\beta}^{\omega}(X;t)$ is increasing with respect to $t$. Thus, from  Theorem \ref{th5.4}, we have $I^\omega_\beta(X;t)\geq\frac{1}{\beta}t\lambda^{\beta-1},$ for all $t>0,$ which can be easily verified from Figure $5.$ 
\end{example}

 \begin{figure}[h!]
	\centering
	\includegraphics[width=14cm,height=8cm]{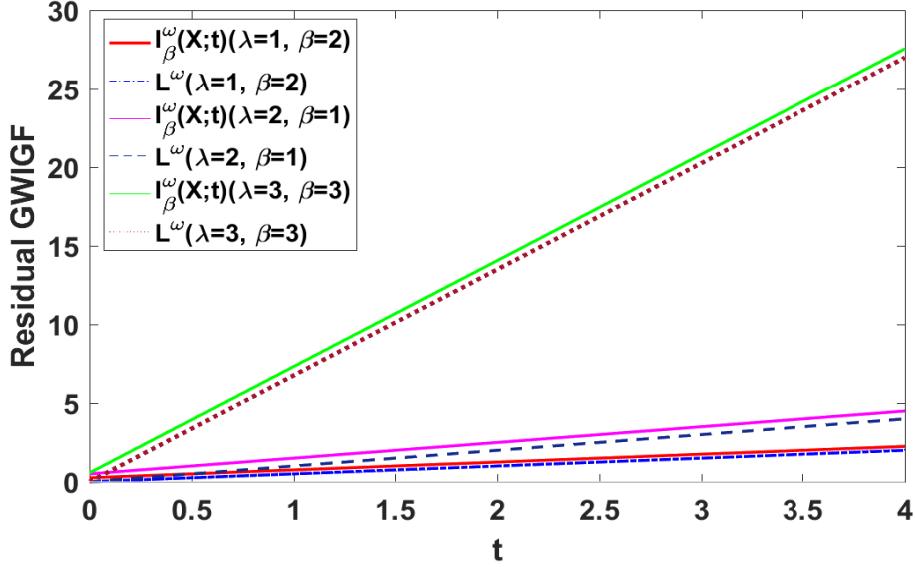}
	\caption{Graphs of the GWIGF for residual lifetime and $L^\omega(\lambda,\beta)=\frac{1}{\beta}t\lambda^{\beta-1}$ as in Example \ref{ex5.2}.}
\end{figure}

\subsection{Residual GWRIGF}
First, we will introduce the GWRIGF for residual lifetime.
\begin{definition}
	Suppose $X$ and $Y$ are  two RVs with survival functions $\bar{F}(\cdot)$ and $\bar{G}(\cdot),$ and PDFs $f(\cdot)$ and $g(\cdot),$ respectively. Then, for any $\beta\ge1$, the residual GWRIGF between $X$ and $Y$ is 
	\begin{eqnarray}\label{eq5.17}
	R^\omega_\beta(X,Y;t)=\int^\infty_t\omega(x)\left(\frac{f(x)}{\bar F(t)}\right)^\beta\left(\frac{g(x)}{\bar G(t)}\right)^{1-\beta}dx,
	\end{eqnarray}
	where $\omega(x)$ is non-negative real valued measurable function.
\end{definition}

When $\omega(x)=1$, the residual GWRIGF becomes  residual RIGF. The following example produces closed form expression of the residual GWRIGF. 
\begin{example}
	Suppose $X$ and $Y$ follow Pareto-I distributions with CDFs $F(x)=1-x^{-c}$ and $G(x)=1-x^{-\gamma}$, $c,~\gamma>0$ and $x\geq1$ and PDFs $f(x)= cx^{-(1+c)}$ and $g(x)=\gamma x^{-(1+\gamma)},$ respectively. Then, for $\beta\ge1$ we obtain
	$$ R^\omega_\beta(X,Y;t)=\frac{c^\beta \gamma^{1-\beta}}{1-\gamma+\beta c-\beta \gamma}t,~~\beta c-\beta \gamma+\gamma-1>0.$$
\end{example}

Differentiating (\ref{eq5.17}) with respect to $\beta$ we get 
\begin{eqnarray}
\frac{\partial}{\partial \beta}R_{\beta}^{\omega}(X,Y;t)=\int_{t}^{\infty}\omega(x) \frac{g(x)}{\bar{G}(t)} \left(\frac{f(x)/\bar{F}(t)}{g(x)/\bar{G}(t)}\right)^{\beta}\log\left(\frac{f(x)/\bar{F}(t)}{g(x)/\bar{G}(t)}\right)dx.
\end{eqnarray}

The following observations can be easily pointed out:
\begin{itemize}
	\item $R^\omega_\beta(X,Y;t)|_{\beta=1}=E[\omega(X)|X>t];$
	\item $R^\omega_\beta(X,Y;t)|_{t=0}=R^\omega_\beta(X,Y);$
	\item $R^\omega_\beta(X,Y;t)=R^{\omega}_{1-\beta}(Y,X;t);$
	\item $\frac{\partial}{\partial \beta}R_{\beta}^{\omega}(X,Y;t)|_{\beta=0}=KL^{\omega}(X,Y;t)$,
\end{itemize}
where $KL^{\omega}(X,Y;t)=\int_{t}^{\infty}\omega(x)\frac{f(x)}{\bar{F}(t)}\log\left(\frac{f(x)/\bar{F}(t)}{g(x)/\bar{G}(t)}\right)dx$ is called as the weighted residual KL divergence (see \cite{moharana2019weighted}). \\

Following theorem presents the effects of the residual GWRIGF under monotonic transformations.
\begin{theorem}
	Suppose $X$ and $Y$ are RVs with survival functions $\bar{F}(\cdot)$ and $\bar{G}(\cdot)$, and PDFs $f(\cdot)$ and $g(\cdot)$, respectively. Let $\psi(\cdot)$ be a strictly monotonic, differentiable and invertible function. Then,
	\begin{equation}
	R^\omega_\beta(\psi(X),\psi(Y);t)=\left\{
	\begin{array}{ll}
	R^{\omega_\psi}_\beta\big(X,Y;\psi^{-1}(t)\big),~\text{if  $\psi$  is  strictly  increasing;}
	\\
	\\
	-R^{\omega_\psi}_\beta\big(X,Y;\psi^{-1}(t)\big),~\text{if  $\psi$  is  strictly  decreasing,}
	\end{array}
	\right.
	\end{equation}
	where $\omega_\psi(x)=\omega(\psi(x))$ is a non-negative real valued measurable function.
\end{theorem}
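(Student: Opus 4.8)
The plan is to mimic the proof of Theorem~\ref{th3.1}, carrying the extra survival-function normalizations through the same change of variables. First I would record the densities of the transformed variables: by the change-of-variable formula in~(\ref{eq3.5}), for strictly increasing $\psi$ one has $f_\psi(x) = f(\psi^{-1}(x))/\psi'(\psi^{-1}(x))$ and $g_\psi(x) = g(\psi^{-1}(x))/\psi'(\psi^{-1}(x))$ on $(\psi(0),\psi(\infty))$. Next I would identify the survival functions: since $\psi$ is increasing, $P(\psi(X)>t) = P(X>\psi^{-1}(t))$, so $\bar F_\psi(t) = \bar F(\psi^{-1}(t))$ and likewise $\bar G_\psi(t) = \bar G(\psi^{-1}(t))$.

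Then I would substitute these four expressions into the definition~(\ref{eq5.17}) of the residual GWRIGF applied to $\psi(X)$ and $\psi(Y)$, and perform the substitution $x = \psi(u)$, $dx = \psi'(u)\,du$, which maps the lower limit $t$ to $\psi^{-1}(t)$ and the upper limit $\infty$ to $\psi^{-1}(\infty)$. The key simplification is that the Jacobian factor $\psi'(u)$ combines with the two density denominators to give $(\psi'(u))^{-\beta}\,(\psi'(u))^{-(1-\beta)}\,\psi'(u) = 1$, so all derivative factors vanish. What remains is exactly $\int_{\psi^{-1}(t)}^{\infty} \omega(\psi(u)) \left(f(u)/\bar F(\psi^{-1}(t))\right)^{\beta}\left(g(u)/\bar G(\psi^{-1}(t))\right)^{1-\beta}\,du = R^{\omega_\psi}_\beta(X,Y;\psi^{-1}(t))$, with $\omega_\psi(x) = \omega(\psi(x))$, which is the claimed identity for the increasing case.

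For strictly decreasing $\psi$ the computation is identical except that the change of variables reverses the orientation of the integral (equivalently, the absolute value in the density transformation of~(\ref{eq3.5}) contributes one extra sign), which yields the factor $-1$ in front, exactly as in the decreasing branch of Theorem~\ref{th3.1}. I expect the main obstacle to be the bookkeeping in the decreasing case: one must track the sign of $\psi'$, the absolute values in $f_\psi$ and $g_\psi$, and the flipped limits so that precisely one leftover sign survives and lands in the right place; the increasing case is a one-line cancellation once the densities and survival functions are written down. A remark worth noting in passing is that for decreasing $\psi$ the event $\{\psi(X)>t\}$ equals $\{X<\psi^{-1}(t)\}$, so the statement should be read with the same formal convention used for the decreasing case of Theorem~\ref{th3.1}.
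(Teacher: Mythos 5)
Your proposal is correct and matches the paper's intent exactly: the paper omits the proof, stating only that it is analogous to Theorem \ref{th3.1}, and your argument is precisely that analogue, with the correct additional observation that $\bar F_{\psi}(t)=\bar F(\psi^{-1}(t))$ for increasing $\psi$ and the same Jacobian cancellation $(\psi'(u))^{-\beta}(\psi'(u))^{-(1-\beta)}\psi'(u)=1$. Your closing remark about the formal sign convention in the decreasing case is a fair caveat that applies equally to the paper's own statement of Theorem \ref{th3.1}.
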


\begin{proof}
	We omit the proof since it is analogous to Theorem \ref{th3.1}. 
\end{proof}

\section{Non-parametric estimator for residual GWIGF }
In this section, we propose a non-parametric estimator of the residual GWIGF in (\ref{eq5.2}) based on the kernel density estimator of $f(\cdot)$, given by
\begin{eqnarray}\label{eq6.1}
\widehat f(x_i)=\frac{1}{nb_n}\sum_{i=1}^{n}k\left(\frac{x-X_i}{b_n}\right), 
\end{eqnarray}
where $k(\cdot)~(\ge0)$ is known as kernel, satisfying Lipschitz condition with $\int k(x)dx=1$. Further, $k(\cdot)$ is symmetric with respect to the origin. Here, $\{b_n\}$, known as bandwidths is a sequence of positive real numbers such that $b_n\rightarrow0$ and $nb_n\rightarrow\infty$ for $n\rightarrow\infty$. For details about kernel density estimator, plese see the references by \cite{rosenblatt1956remarks} and \cite{parzen1962estimation}. The non-parametric kernel estimator of $I^w_\beta(X;t)$ is 
\begin{eqnarray}\label{eq6.2}
\widehat I^w_\beta(X;t)=\int_{t}^{\infty}x\left(\frac{\widehat f(x)}{\widehat{ \bar {F}}(t)}\right)^\beta dx,~\beta\geq1,
\end{eqnarray}
where $\widehat{ \bar {F}}(t)=\int_{t}^{\infty}\widehat f(x)dx$. Next, we consider a simulated data set and two real data sets for illustrating the proposed non-parametric estimator in (\ref{eq6.2}).

\subsubsection*{Simulated data set}
We have generated a data set from the exponential distribution using Monte Carlo simulation technique. The simulation has been performed in ``Mathematica" software. The true value of the parameter {(here scale parameter)} of exponential distribution is considered as $\lambda=0.5.$ For the purpose of estimation, the Gaussian kernel has been employed, which is given by $$k(x)=\frac{1}{\sqrt{2\pi}}e^{-\frac{x^2}{2}},~x\in\mathbb{R}.$$
{Using $600$ bootstrap samples, the values of the} bias and mean squared error (MSE) of the proposed estimator have been computed for different values of $t$, $\beta$ and $n$, which are presented in Table \ref{tb1}. From Table \ref{tb1}, we notice that the MSE of the proposed estimator decreases as the sample size $n$ increases, which gurantees the consistency of the proposed estimator. {In addition to this observation, we also observe that the bias of the estimator is getting improved when the sample size increases.} Similar observation is noticed when $\beta$ increases. 

\begin{table}[h!]
	\centering % used for centering table
		\caption {The bias and MSE (in bracket) for the kernel estimator of residual GWIGF in (\ref{eq6.2}).}
	\scalebox{1.1}{\begin{tabular}{c c c c c c c c } % centered columns (4 columns)
			\hline\hline\vspace{.1cm} %inserts double horizontal lines
			$\beta$ &$t$ & $n=30$ &$n=50$ &$n=70$ &$n=100$\\
			\hline\hline
			
			\multirow{10}{1.4cm}{1.2} & 0.1 & 0.07373 & 0.05863 & 0.05629& 0.04864  \\
			~ & ~ &  (0.04178)& (0.02437) & (0.01886) & (0.01305)  \\[1.2ex]
			~ & 0.2 &0.08917 & 0.06953 & 0.5834 & 0.05266   \\
			~ & ~ &  (0.04717)& (0.02838) & (0.01998) & (0.01390) \\[1.2ex]
			~ & 0.5 & 0.13204 & 0.10726 & 0.10199 & 0.09089 \\
			~ & ~ &(0.06276) & (0.04156) & (0.03017) & (0.02212)  \\[1.2ex]
			~ & 0.7 & 0.18795 & 0.15415 & 0.13532 & 0.13094  \\
			~ & ~ & (0.09471) & (0.05791) & (0.04032) & (0.03301) \\[1.2ex]	
			~ & 0.9 & 0.22829 & 0.19873 &  0.17720 & 0.15998  \\
			~ & ~ & (0.12825) & (0.08539) & (0.05660) & (0.04604) \\[1ex]		
			\hline
			\multirow{10}{1.4cm}{1.7} & 0.1 & -0.00382 & -0.00539 & -0.00370& -0.00484  \\
			~ & ~ &  (0.00314)& (0.00226) & (0.00134) & (0.00112)  \\[1.2ex]
			~ & 0.2 & 0.00598 & -0.00296 & -0.00203 &-0.00291   \\
			~ & ~ &  (0.00397)& (0.00214) & (0.00143) & (0.00103) \\[1.2ex]
			~ & 0.5 & 0.03349 & 0.02178 & 0.02115 & 0.02235\\
			~ & ~ &(0.00716) & (0.00377) & (0.00277) & (0.00216)  \\[1.2ex]
			~ & 0.7 & 0.06578 & 0.05669 & 0.0504 & 0.04628  \\
			~ & ~ & (0.01405) & (0.00915) & (0.00618) & (0.00441) \\[1.2ex]	
			~ & 0.9 & 0.10843 & 0.08372 &  0.08253 & 0.07401 \\
			~ & ~ & (0.02835) & (0.01516) & (0.01253) & (0.00888) \\[1ex]		
			\hline
			\multirow{10}{1.4cm}{2.5} & 0.1 & -0.02357 & -0.02232 & -0.02061& -0.01870 \\
			~ & ~ &  (0.00088)& (0.00070) & (0.00059) & (0.00047)  \\[1.2ex]
			~ & 0.2 & -0.02413 & -0.02178 & -0.02109 &-0.01893   \\
			~ & ~ &  (0.00097)& (0.00078) & (0.00065) & (0.00050) \\[1.2ex]
			~ & 0.5 &-0.01471 & -0.01397 & -0.01189 & -0.00982\\
			~ & ~ &(0.00161) & (0.00101) & (0.00068) & (0.00052)  \\[1.2ex]
			~ & 0.7 & 0.00357 & 0.00272 & 0.00210 & 0.00201  \\
			~ & ~ & (0.00334) & (0.00158) & (0.00108) & (0.00073) \\[1.2ex]	
			~ & 0.9 & 0.02581 & 0.02079 &  0.02012 & 0.01998 \\
			~ & ~ & (0.00683) & (0.00347) & (0.00268) & (0.00181) \\[1ex]		
			\hline\hline	 		
	\end{tabular}} 

	\label{tb1} % is used to refer this table in the text
\end{table}

\subsubsection*{Real data sets}

Here, we consider two data sets representing the remission times (in months) of $128$ bladder cancer data (see \cite{lee2003statistical}) and relief times of $20$ patients who received an analgesic (see \cite{gross1975survival}). To compute the bias and MSE of the proposed estimator, the Gaussian kernel as in simulated data set has been used. {The data sets are provided below.} We note that these data sets have been used by \cite{maiti2023progressively} and \cite{dutta2023statistical} for their study. 

\begin{table}[h!]
	\caption {The bladder cancer data set (data set-I).}
	\centering % used for centering table
	\scalebox{1.1}{\begin{tabular}{c c c c c c c c } % centered columns (4 columns)
			\hline %inserts double horizontal lines[0.5Ex]
			0.08, 0.20, 0.40, 0.50, 0.51, 0.81, 0.90, 1.05, 1.19, 1.26, 1.35,1.40, 1.46, 1.76,\\[0.5Ex]
			2.02, 2.02, 2.07, 2.09, 2.23, 2.26, 2.46, 2.54, 2.62, 2.64, 2.69, 2.69, 2.75, 2.83,\\[0.5Ex]
			2.87, 3.02, 3.25, 3.31, 3.36, 3.36, 3.48, 3.52, 3.57, 3.64, 3.70, 3.82, 3.88, 4.18,\\[0.5Ex]
			4.23, 4.26, 4.33, 4.34, 4.40, 4.50, 4.51, 4.87, 4.98, 5.06, 5.09, 5.17, 5.32, 5.32, \\[0.5Ex]
			
			5.34, 5.41, 5.41, 5.49, 5.62, 5.71, 5.85, 6.25, 6.54, 6.76, 	6.93, 6.94, 6.97, 7.09,\\[0.5Ex]
			7.26, 7.28, 7.32, 7.39, 7.59, 7.62, 7.63,  7.66, 7.87, 7.93, 8.26, 8.37, 8.53, 8.65,\\[0.5Ex]
			8.66,~ 9.02, 9.22, 9.47, 9.74, 10.06, 10.34, 10.66, 10.75, 11.25, 11.64 , 11.79, \\[0.5Ex]
			11.98, 12.02, 12.03, 12.07, 12.63, 13.11, 13.29, 13.80, 14.24, 14.76, 14.77,  14.83,\\[0.5Ex]
			15.96, 16.62, 17.12, 17.14, 17.36, 18.10, 19.13, 20.28, 21.73, 22.69, 23.63,  25.74, \\[0.5Ex]
			25.82, 26.31, 32.15, 34.26, 36.66, 43.01, 46.12, 79.05.\\
			
			\hline
	\end{tabular}} 	
	\label{tb1} % is used to refer this table in the text
\end{table}

\begin{table}[h!]
	\caption {The relief times data set (data set-II).}
	\centering % used for centering table
	\scalebox{1.1}{\begin{tabular}{c c c c c c c c } % centered columns (4 columns)
			\hline %inserts double horizontal lines[0.5Ex]
			1.1,~ 1.2,~ 1.3,~ 1.4,~ 1.4,~ 1.5, ~1.6,~ 1.6, ~1.7,~ 1.7,\\[0.5Ex] 1.7,~ 1.8,~ 1.8, ~1.9, ~2.0, ~2.2, ~2.3, ~2.7,~ 3.0,~ 4.1.\\
			\hline
	\end{tabular}} 	
	\label{tb1} % is used to refer this table in the text
\end{table} 

{In order to check the model which fits better than other models, here we have employed goodness of fit test. In this purpose, we use several techniques for checking the goodness of fit test, such as the negative
log-likelihood criterion (-ln L), Akaike’s-information criterion (AIC), AICc, and the Bayesian information criterion (BIC). According to the goodness of fit test, generalised X-exponential (GXE) distribution fits better than  the three parameter bathtub-shaped (Tbathtub) and inverted exponentiated half logistic (IEHL) distributions. The values of MLEs and four goodness of fit test statistics are given in Table $4$. From Table $4$, we observe that the values of the test statistics of GXE distribution are smaller than the other two distributions. Therefore, the GXE distribution is taken as a fitted model for the provided data set given in Table $2$.} The values of bias and MSE of (\ref{eq6.2}) have been evaluated using $600$ bootstrap samples of size $n=128$, which are presented in Table \ref{tb2} for different $t$ and $\beta$ values. We have only considered the value of $b_n$ as $0.20$. However, one may consider other values of $b_n$ for computational purposes.
%\begin{landscape}
	%\begin{sidewaystable}
	\begin{table}[h]
		\centering
		\caption {{The MLE, BIC, AICc, AIC, and negative log-likelihood values of the statistical models for the real data set-I}}
		\scalebox{1}{\begin{tabular}{ccccccc}
				\toprule
				%\multicolumn{1}{c}{} & \multicolumn{3}{c}{\textbf{Topic 2}} & \multicolumn{2}{c}{\textbf{Topic 3}} \\
				%\cmidrule(rl){2-4} \cmidrule(rl){5-6}
				\textbf{Model}   & \textbf{Shape}  & \textbf{Scale}  & \textbf{-ln L}  & \textbf{AIC}& \textbf{AICc} & \textbf{BIC}  \\
				\midrule
				%\rowcolor{lavender}
				GXE   & $\widehat{\alpha}=0.31806$ & $\widehat{\lambda}=0.00398$ & 468.6807& 941.3613 & 941.4573 & 947.0654 \\[1.2ex]
				Tbathtub  & $\widehat{\alpha}= 1.36667$ & $\widehat{\gamma}=1.13333$  &  1060.126 & 2126.252 &  2126.454 &  2134.689\\
				%\rowcolor{lavender}
				~ &  ~ & $\widehat{\beta}= 0.33333$  & ~ & ~&~&~ \\[1.2ex]
				IEHL &  $\widehat{\alpha}=0.58251$ & $\widehat{\lambda}=0.44029$  & 471.5664 & 947.1328 & 947.2288& 952.8368 \\
				%\rowcolor{lavender}
				%row5 & A.5 & B.5 & C.5 & D.5 & E.5 \\
				\bottomrule
		\end{tabular}}
		%\end{sidewaystable}
	\end{table}
	%\end{landscape}
%\bigskip
%\begin{landscape}
\begin{table}[h]
	\centering
	\caption {{The MLE, BIC, AICc, AIC, and negative log-likelihood values of the statistical models for the real data set-II}}
	\scalebox{1}{\begin{tabular}{ccccccc}
			\toprule
			%\multicolumn{1}{c}{} & \multicolumn{3}{c}{\textbf{Topic 2}} & \multicolumn{2}{c}{\textbf{Topic 3}} \\
			%\cmidrule(rl){2-4} \cmidrule(rl){5-6}
			\textbf{Model}  & \textbf{Shape}  & \textbf{Scale}  & \textbf{-ln L}  & \textbf{AIC}& \textbf{AICc} & \textbf{BIC}  \\
			\midrule
			%\rowcolor{lavender}
			Gumbel-II  & $\widehat{\alpha}= 4.0172$ & $\widehat{\lambda}=6.0221$  &  15.4089 & 34.8174 &  35.5233 &  36.8089\\[1.2ex]
			GXE &  $\widehat{\alpha}=3.9618$ & $\widehat{\lambda}=0.5388$ & 19.3554 & 43.4167&   42.7108 & 44.7023 \\[1.2ex]
			EXP &  $\widehat{\lambda}=0.5263$ & ~  & 32.8371 & 67.6742 & 67.8964& 68.6699 \\[1.2ex]
			IEHL & $\widehat{\alpha}=11.6348$ & $\widehat{\lambda}=0.1582$  & 16.9492 & 37.8984 & 38.6043& 39.8899 \\
			%\rowcolor{lavender}
			%row5 & A.5 & B.5 & C.5 & D.5 & E.5 \\
			\bottomrule
	\end{tabular}}
\end{table}
%\end{landscape}

{Further, we have considered the data set on the relief times of the patients, received an analgesic, given by Table $3$. For checking the fitted model corresponding to the data set, we have explored goodness of fit test similar to the data set-I given in Table $2.$ We have compared four statistical models, say Gumbel-II, GXE, exponential and IEHL distributions. Based on the negative log-likelihood criterion, Akaike’s-information criterion, AICc, and the Bayesian information criterion, we conclude that this data set fits Gumbel Type-II distribution  with shape parameter $\alpha=4.0172$ and scale parameter $\lambda=6.0221$.} For computing bias and MSE of the proposed non-parametric estimator, $1000$ bootstrap samples of size $n=20$  have been considered. Various values of $t$ and $\beta$ with $b_n=0.56$ are taken. The bias and MSEs are presented in Table \ref{tb3}.  From Tables \ref{tb2} and \ref{tb3}, it is clear that {the estimator has small bias and less MSE. Thus, we can conclude that}  the proposed non-parametric estimator performs well.

\begin{table}[ht]
	\centering % used for centering table
		\caption {The bias and MSE (in bracket) for the kernel estimator of residual GWIGF in (\ref{eq6.2}) based on bladder cancer data set of $128$ patients.}
	\scalebox{1.0}{\begin{tabular}{c c c c c c c c } % centered columns (4 columns)
			\hline\hline\vspace{.1cm} %inserts double horizontal lines
			$t$ &$ \beta=1.5$ & $\beta=2.0$ &$\beta=2.5$ &$\beta=3.0$ &$\beta=4.0$\\
			\hline\hline
			
			0.1 & -0.16868 & 0.11325 & 0.06698& 0.02737 &0.00395 \\
			~ &  (0.05287)& (0.01456) & (0.00479) & (0.00079)&  (0.000017)\\[1.2ex]
			0.3 & -0.25999 & 0.09549 & 0.06460 & 0.02792 & 0.00407  \\
			~ &  (0.08796)& (0.01087) & (0.00448) & (0.00084)& (0.000018)\\[1.2ex]
			0.5 & -0.28041 & 0.09504 & 0.06529 & 0.02916 & 0.00446\\
			~ &(0.09929) & (0.01075) & (0.00457) & (0.00091)& ( 0.000023)\\[1.2ex]
			0.7 & -0.34453 & 0.08737 & 0.06525& 0.02906&  0.00445\\
			~ & (0.13840) & (0.00952) & (0.00457) & (0.00091)& (0.000022)\\[1.2ex]	
			1.0 & -0.35949 & 0.08619 &  0.06655& 0.03115 & 0.00459\\
			~ & (0.14986) & (0.00916) & (0.00476) & (0.00104) & (0.0000236)\\[1.2ex]
			1.5 & -0.31652 & 0.11238 &  0.08034& 0.03617& 0.00586 \\
			~ & (0.12339) & (0.01458) & (0.00686) & (0.00141)& (0.0000403)\\[1ex]		
			
			\hline\hline	 		
	\end{tabular}} 

	\label{tb2} % is used to refer this table in the text
\end{table}  

\begin{table}[h!]
	\centering % used for centering table
	\caption {The bias and MSE (in bracket) for the kernel estimator of residual GWIGF in (\ref{eq6.2}) based on the relief times of $20$ patients.}
	\scalebox{1.0}{\begin{tabular}{c c c c c c c c } % centered columns (4 columns)
			\hline\hline\vspace{.1cm} %inserts double horizontal lines
			$t$ &$ \beta=1.2$ & $\beta=1.4$ &$\beta=1.7$ &$\beta=2.0$ &$\beta=3.0$\\
			\hline\hline
			
			0.1 & -0.10032 & -0.18902 & -0.29483& -0.36566 & -0.46920 \\
			~ &  (0.01783)& (0.03923) & (0.08981) & (0.13722)&  (0.22358)\\[1.2ex]
			0.3 & -0.10171 & -0.19139 & -0.29633& -0.37375 & -0.46926 \\
			~ &  (0.01754)& (0.04031) & (0.09092) & (0.14314)& (0.22289)\\[1.2ex]
			0.6 & -0.10825 & -0.19877 & -0.29892& -0.36767& -0.47252\\
			~ &(0.01937) & (0.04327) & (0.09230) & (0.13858)& ( 0.22604)\\[1.2ex]
			0.8 & -0.13174 & -0.20767 & -0.30882& -0.37731&  -0.47297\\
			~ & (0.02497) & (0.04765) & (0.09863) & (0.14592)& (0.22665)\\[1.2ex]	
			1.0 & -0.16784 & -0.24750 &  -0.32998& -0.39330& -0.48023\\
			~ & (0.03929) & (0.06634) & (0.11255) & (0.15852) & (0.23347)\\[1.2ex]
			1.5 & -0.01982 & -0.11685 &  -0.25049& -0.38708& -0.73248 \\
			~ & (0.05828) & (0.06610) & (0.13428) & (0.21951)& (0.66738)\\[1ex]		
			
			\hline\hline	 		
	\end{tabular}} 
	
	\label{tb3} % is used to refer this table in the text
\end{table}

{The GWIGF of exponential distribution with scale parameter $\lambda~(>0)$ is given by \begin{align}\label{eq6.3}
I^w_\beta(X;t)=\beta^2 \lambda^{(\beta-2)}(\beta \lambda t+1),~\beta\ge1.
\end{align}
For estimating (\ref{eq6.3}), we first estimate the unknown model parameter $\lambda$ using maximum likelihood approach. The maximum likelihood estimator (MLE) of $I^w_\beta(X;t)$ is given by 
\begin{align}\label{eq6.4}
\tilde{I}^w_\beta(X;t)=\beta^2 \widehat{\lambda}^{(\beta-2)}(\beta \widehat{\lambda} t+1),
\end{align}
where $\widehat{\lambda}$ is the MLE of $\lambda$.
To compute the bias and MSE  values of the parametric estimator given by (\ref{eq6.4}), we conduct a Monte Carlo simulation using R software with $250$ replications. Here we have considered $\lambda=0.5$. The sample sizes are taken as $n=30, 50,70$ and $100$. Different values of $\beta$ and $t$ are also considered. The MSE values are presented within the first bracket in Table $8$ in each row. From Table $8$, we notice that for fixed values of $\beta$ and $t$, the bias and MSE decrease as $n$ increases. From Table $1$ and Table $8$, we observe that the parametric approach has superior performance than the non-parametric approach in the sense of the absolute bias and MSE values.   }

\begin{table}[h!]
		\caption {{The bias and MSE (in bracket) for the parametric estimator of residual GWIGF in (\ref{eq6.2}).}}
	\centering % used for centering table
	\scalebox{1.1}{\begin{tabular}{c c c c c c c c } % centered columns (4 columns)
	
			\hline\hline\vspace{.1cm} %inserts double horizontal lines
			$\beta$ &$t$ & $n=30$ &$n=50$ &$n=70$ &$n=100$\\
			
			\hline\hline
			
			\multirow{10}{1.4cm}{1.2} & 0.1 & -0.01192513 & -0.008816 &-0.010091& -0.005745 \\
			~ & ~ &  (0.03010298)& (0.018208) & (0.0124275) & (0.0083792)  \\[1.2ex]
			~ & 0.2 & -0.011493 &  -0.008534 & -0.009835 &-0.005589   \\
			~ & ~ &  (0.0291723)& (0.017654) & (0.0120484) & ( 0.0081238) \\[1.2ex]
			~ & 0.5 & -0.010195 & -0.007685 & -0.009069&  -0.00512  \\
			~ & ~ &( 0.0264701) & (0.016044) & (0.010947) & (0.0073812)  \\[1.2ex]
			~ & 0.7 & -0.00933 & -0.007119 & -0.008559 & -0.004809 \\
			~ & ~ & ( 0.0247435) & (0.0150142) & ( 0.0102423) & (0.0069061) \\[1.2ex]	
			~ & 0.9 &-0.008465& -0.006553& -0.008048& -0.004497  \\
			~ & ~ & (0.0230766) & ( 0.0140191) & (0.0095613) & (0.0064469) \\[1ex]		
			\hline
			\multirow{10}{1.4cm}{1.7} & 0.1 & -0.001724 &  -0.001218& -0.00127& -0.000743  \\
			~ & ~ &  (0.0003474)& (0.0002095) & (0.0001436) & (0.000097)  \\[1.2ex]
			~ & 0.2 & -0.000741 & -0.000592 & -0.000735 & -0.000411   \\
			~ & ~ &  (0.0001904)& (0.0001169) & (0.0000802) & (0.0000543) \\[1.2ex]
			~ & 0.5 & 0.002207 & 0.001284 & 0.000872 & 0.000587\\
			~ & ~ &(0.0000140) & (0.0000048) & (0.0000022) & (0.0000009)  \\[1.2ex]
			~ & 0.7 & 0.004173 & 0.002535&  0.001943 & 0.001251 \\
			~ & ~ & (0.0001419) & (0.0000679) & (0.0000435) & (0.0000274) \\[1.2ex]	
			~ & 0.9 &0.006138 & 0.003786&  0.003013& 0.001916 \\
			~ & ~ & (0.0004663) & (0.0002413) & (0.0001595) & (0.0001035) \\[1ex]		
			\hline
			\multirow{10}{1.4cm}{2.5} & 0.1 &0.003135 & 0.001983 &  0.001686&  0.001048 \\
			~ & ~ &  (0.0002359)& (0.0001313) & (0.0000886) & (0.0000588)  \\[1.2ex]
			~ & 0.2 &0.004283 & 0.002689 & 0.002257 & 0.001408  \\
			~ & ~ &  (0.0003941)& (0.0002163) & (0.0001454) & (0.0000961) \\[1.2ex]
			~ & 0.5 & 0.007725&  0.004805 & 0.003971 & 0.002488\\
			~ & ~ &(0.0011126) & (0.0005986) & (0.0004000) & (0.0002628)  \\[1.2ex]
			~ & 0.7 & 0.01002 &  0.006216 & 0.005113 & 0.003208 \\
			~ & ~ & (0.0017949) & (0.0009595) & (0.00064) & (0.0004196) \\[1.2ex]	
			~ & 0.9 & 0.012315 & 0.007627 &  0.006255 & 0.003928 \\
			~ & ~ & (0.0026399) & ( 0.0014052) & ( 0.0009361) & (0.0006129) \\[1ex]		
			\hline\hline	 		
	\end{tabular}} 
	\label{tb1} % is used to refer this table in the text
\end{table}

\section{Conclusions} 
{Motivated by the information generating function (\cite{golomb1966}) and the weighted Shannon entropy (\cite{guiacsu1971weighted}), here we proposed GWIGF and studied its several properties. It is shown that the GWIGF produces the weighted Shannon entropy, weighted informational energy and other useful weighted information measures. Further, we established that the proposed weighted IGF is shift dependent. This property allows GWIGF to be useful in many areas such as reliability analysis, biology and information theory. Further, it has been established that there is a connection between GWIGF and weighted Shannon entropy. Bounds are useful when it is difficult to obtain explicit form of an informational measure. Here, we have also obtained lower and upper bounds of the GWIGF. Monotone transformations are used in diverse areas of research, such as finance, signal processing and machine learning. We have studied the effect of GWIGF under monotone transformations. Further, we propose a relation between the dispersive order and GWIGF-based order. A connection between the weighted varentropy and varentropy with the newly proposed GWIGF has been established. We have also obtained an upper bound of the sum of two independent random variables using the concept of convolution of two independent random variables. Further, we studied the concept of the GWRIGF. It is shown that the weighted Kullback-Leibler divergence can be generated from the proposed  GWRIGF. The effect of the GWRIGF under strictly monotone transformations has been provided. The GWIGF has been studied for escort, generalized escort and mixture distributions. A close connection between the weighted $\beta$-cross informational energy with the proposed GWIGF has been established for the escort distribution. 

We have also studied GWIGF and GWRIGF for the residual lifetime, which is an important concept in reliability and life testing studies. We obtained a connection between the GWIGF of residual lifetime and cumulative hazard rate. Further, we obtain the GWIGF of residual lifetime of equilibrium distribution in terms of the MRL and weighted MRL. The hazard rate order-based sufficient conditions have been derived under which the residual GWIGFs of two distributions are ordered. The GWRIGF of residual lifetime has been proposed. Its effects under monotone transformations is presented. A non-parametric estimator has been proposed, and its biases and MSEs are computed based on a simulated data set and two real-life data sets. For real data sets, we have used goodness of fit test to obtain the best fitted model. Further, we compared the non-parametric approach with a parametric method. For parametric method, we considered exponential distribution. It is observed that the parametric approach has superior performance than the non-parametric approach in terms of the absolute bias and MSE values. }

\section*{Acknowledgements}   {The authors thank the Editor in Chief, Associate Editor and referees for all their helpful comments and suggestions, which led to the substantial improvements.}  The author Shital Saha thanks the UGC (Award No. $191620139416$), India, for the financial assistantship.

\section*{Conflicts of interest} Both authors declare no conflict of interest.

%\section*{Acknowledgements}  The author Shital Saha thanks the UGC (Award No. $191620139416$), India, for the financial assistantship received to carry out this research work. Both authors thank the Department of Mathematics, National Institute of Technology Rourkela, Odisha, India for providing research fecilities. 
%
%
%
%\section*{Conflicts of interest} The authors declare no conflict of interest. 

\bibliography{references}
\end{document}